\newtheorem{theorem}{Theorem}[section]
\newtheorem*{theo}{Theorem}
\newtheorem{lemma}[theorem]{Lemma}
\theoremstyle{definition}
\newtheorem{proposition}[theorem]{Proposition}
\newtheorem{corollary}[theorem]{Corollary}
\theoremstyle{remark}
\newtheorem{remark}[theorem]{Remark}
\numberwithin{equation}{section}
\begin{document}

\title[Convexity]{Convexity of length functions and Thurston's shear coordinates} 

\author{Guillaume Th\'eret}
\address{Lyc\'ee Ni\'epce\\ 
 71100 Chalon-sur-Sa\^one, France\\
\&\\
Institut de Math\'ematiques de Bourgogne\\
9 avenue Alain Savary,
21078 Dijon - France}
\curraddr{}
\email{guillaume.theret71@orange.fr}
\thanks{}

\subjclass[2000]{Primary 30F60, 57M50, 53C22.\\
\noindent Key words: Teichm\"uller space, hyperbolic
structure, geodesic lamination, stretch, length function, convexity, measured foliation.}

\date{\today}

\dedicatory{}

\begin{abstract}
We show that the length function of a measured geodesic lamination is convex in Thurston's shear coordinates over Teichm\"uller space and \emph{strictly} convex for generic laminations. 
We give some consequences of this result in the context of Thurston's asymmetric metric on Teichm\"uller space.
\end{abstract}

\maketitle

\section{Introduction}
\label{intro}

It is shown in \cite{BBFS09} that the length of a simple closed geodesic on a hyperbolic surface $\Sigma$ is a convex function over Teichm\"uller space, the latter being parameterized by a particular type of Thurston's shear coordinates,
namely, those arising from a geodesic lamination $\mu$ which completes a pants decomposition of the surface $\Sigma$. 
Furthermore, \emph{strict} convexity holds for simple closed geodesic curves intersecting all the leaves of $\mu$.

Our goal here is to generalize the above result in two directions.
Firstly, we shall establish that these convexity results hold true for all measured geodesic laminations, and not only for simple closed geodesics.
Secondly, we shall show that the results are also valid for all kinds of shear coordinates put on Teichm\"uller space, i.e., when the lamination $\mu$ is arbitrarily chosen among all complete geodesic laminations, and not only among those that complete pants decompositions.
Because we aim at obtaining a result about \emph{strict} convexity of length functions, we cannot simply use \cite{BBFS09} and a density argument. 

At the end of the paper, we shall give a few consequences in the context of Thurston's asymmetric metric on Teichm\"uller space and stretch lines.\\

Our paper is organized as follows.
In the first section following this introduction, we define the length of a measured lamination and show that the infimum of the lengths of the measured laminations that belong to the same measure class is realized by the geodesic representative
(see also Papadopoulos \cite{Pap91} for a somewhat weaker result). This result is notorious for simple closed curves.
This section gives us the occasion to introduce thick train-track approximations of a geodesic lamination, a notion introduced by Thurston (see also \cite{Bonahon96}) and which is used all along our paper.

In section \ref{section:shear}, we define Thurston's \emph{local} shear coordinates associated to a thick train-track approximation of a complete geodesic lamination.
Follows then a short -- most details are omitted there -- discussion on Thurston's \emph{global} shear coordinates on Teichm\"uller space associated to a complete geodesic lamination (and not merely to a train-track approximation). 
This section is rather lengthy, mainly because we strove in that section to recall and to reconcile notions scattered in literature. 

In section \ref{section:convexity}, we establish convexity of length function of measured geodesic laminations expressed in terms of shear coordinates.

Finally, we give in the last section a few applications of the obtained results to stretch lines.\\

We now give some more details about what is proven in here.

In the whole text, the underlying surface $\Sigma$ is closed, \emph{oriented}, compact and with negative Euler-Poincar\'e characteristic.
Such a surface always admits a hyperbolic structure.
Possibly our results might be extended to the case of finitely punctured surfaces, but we kept saying anything about this case, 
chiefly because it certainly would have tangled the presentation which stretches long enough.

A \textbf{geodesic lamination} of a hyperbolic surface $\Sigma$ is a foliation of a closed subset of $\Sigma$ by simple geodesics.
Even though one needs to fix some hyperbolic structure on the underlying topological surface $\Sigma$ in order to speak about \emph{geodesic} laminations, there exists a natural correspondence between geodesic laminations for different hyperbolic structures that enables one to define them without refering to a particular hyperbolic structure on $\Sigma$. 

A geodesic lamination of $\Sigma$ is \textbf{complete} when each of its complementary components is the interior of an ideal triangle.
One particular type of such complete geodesic laminations is obtained by considering a geodesic pants decomposition of the surface and by adding 
as many disjoint simple geodesics as possible, each necessarily spiralling around the components of the pants decomposition.
These are the type of complete geodesic laminations that were considered in \cite{BBFS09}. 
We emphasize that the latter have finitely many leaves whereas generic complete geodesic laminations are made up of uncountably many leaves.

Some geodesic laminations can be endowed with a {\bf transverse measure} which gives a ``mass" to the set of leaves crossing any given transverse arc. Geodesic laminations equipped with a transverse measure are called {\bf measured} geodesic laminations and they all together form a manifold $\mathcal{ML}(\Sigma)$. Once the surface $\Sigma$ is equipped with a hyperbolic metric, it is possible to satisfactorily define the length of any measured geodesic lamination with respect to the given hyperbolic metric (see Section \ref{section:length}).

Let us fix a complete geodesic lamination $\mu$ of $\Sigma$. ($\mu$ need not carry any transverse measure.)
Thurston introduced global coordinates on Teichm\"uller space $\mathcal{T}(\Sigma)$ called the \textbf{shear coordinates} associated to $\mu$.
These coordinates record the ``amount of shearing", so to speak, between pairs of ideal triangles of $\Sigma\setminus\mu$:
an ideal triangle has one \textbf{distinguished point} per side (called the middle point in \cite{BBFS09}). 
The shear between two ``facing" ideal triangles is then defined by measuring the signed shift between the distinguished points of the  facing sides of the ideal triangles. 
All these notions will be made clearer shortly, but it is worth saying right now that we shall use the universal covering to define them properly.
As usual, defining coordinates on Teichm\"uller space requires some choices that eventually turn out to be not so important (in our topologically finite situation).

Let $\lambda\in\mathcal{ML}(\Sigma)$ be a measured geodesic lamination and let 
$$
\ell_{\lambda}\ :\ \mathcal{T}(\Sigma)\to\mathbb{R}_{+}
$$
be its length function.
The theorem we shall prove here is the following.

\begin{theorem}
\label{theorem_principal}
Let $\mu$ be a complete geodesic lamination of the closed surface $\Sigma$.
The length function $\ell_{\lambda}$ of a measured geodesic lamination $\lambda$ is convex in terms of Thurston's shear coordinates on $\mathcal{T}(\Sigma)$ associated to $\mu$. Furthermore, if $\lambda$ intersects all leaves of $\mu$ transversely, then the length function is strictly convex.
\end{theorem}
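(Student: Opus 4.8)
The plan is to express $\ell_\lambda$ as a \emph{partial minimization of a convex function}, so that convexity becomes formal, and then to pin strict convexity to a non-degeneracy condition. First I would fix a thick train-track approximation $\tau$ of $\mu$ carrying $\lambda$, as produced in Section~\ref{section:length}, together with the ideal triangulation of $\Sigma$ dual to $\mu$, with edges $e_1,\dots,e_N$; a point on $e_i$ will be located by its signed distance to the distinguished point of $e_i$. By Section~\ref{section:shear} the local shear coordinates attached to $\tau$ parametrize $\mathcal T(\Sigma)$ and differ from Thurston's global shear coordinates of $\mu$ by an affine isomorphism; since convexity and strict convexity are affine invariants, it is enough to argue in a coordinate system whose variables are, up to an affine relabelling, the edge shears $x=(x_e)_e$.

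Every ideal triangle is isometric to one fixed model $\Delta$; let $F(a,b)$ be the length of the geodesic chord of $\Delta$ joining the point at signed distance $a$ from the distinguished point of one side to the point at signed distance $b$ from the distinguished point of a second side. The essential observation is that $F$ does not depend on the hyperbolic structure: all the moduli sit in the gluings, i.e.\ in the shears. Developing the structure in $\mathbb H^2$ triangle by triangle, for a simple closed geodesic $\gamma$ in minimal position with respect to the triangulation, crossing its edges along a cyclic word $e(0),\dots,e(n-1)$, I would obtain
\[
\ell_\gamma(x)=\min_{(u_j)\in\mathbb R^{n}}\ \sum_{j\in\mathbb Z/n}F\bigl(u_{j-1}-x_{e(j-1)},\ u_j\bigr),
\]
the minimum ranging over all choices of a point $u_j$ on each crossed edge: the right-hand side computes the infimum of hyperbolic length over all loops crossing that cyclic sequence of edges --- an infimum realized by the geodesic $\gamma$ --- and the substitution $u_j\mapsto u_j-x_{e(j)}$ records exactly the shear along $e(j)$ between the two triangles it bounds. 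Combining the identity $\ell_\lambda=\inf\{\text{lengths in the measure class of }\lambda\}$ from Section~\ref{section:length} with the thick train-track formalism there, the same formula should hold for an arbitrary $\lambda\in\mathcal{ML}(\Sigma)$, the finite sum over $j$ replaced by an integration over $\tau$ against the transverse measure of $\lambda$ and the reals $u_j$ by a measurable choice of position along the edges --- the crucial point being that $x$ still enters only through the affine substitutions $u\mapsto u-x_e$.

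Now $F$ is the restriction of the distance function of $\mathbb H^2\times\mathbb H^2$ to the product of two sides of $\Delta$, hence is convex on $\mathbb R^2$; as these two sides are distinct and disjoint geodesics, $F$ is moreover strictly convex and coercive (in a suitable normalization one has $\cosh F(a,b)=\tfrac12(e^{a-b}+e^{b-a}+e^{-a-b})$). Therefore $\Psi(u,x):=\sum_j F(u_{j-1}-x_{e(j-1)},u_j)$, and its integrated analogue, is jointly convex in $(u,x)$ and coercive in $u$, being a sum of convex functions precomposed with affine maps. Partial minimization of a jointly convex function is convex, so $x\mapsto\ell_\lambda(x)=\min_u\Psi(u,x)$ is convex, which is the first assertion. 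For strict convexity, note that ``$\lambda$ meets every leaf of $\mu$ transversely'' forces $i(\lambda,e)>0$ for every edge $e$ (the edges being leaves of $\mu$), so every $x_e$ genuinely occurs in $\Psi$; then $\Psi$ is jointly \emph{strictly} convex, for if $\Psi$ were affine along some line each summand would be affine along it (convexity), hence constant along it (strict convexity of $F$), which annihilates the line's direction under every substitution $u\mapsto(u_{j-1}-x_{e(j-1)},u_j)$ and so kills the $u$-component and every $x_e$-component, leaving the zero direction. Finally, a jointly strictly convex function that is coercive in $u$ has a partial minimum strictly convex in the remaining variables (compare, for $x_0\ne x_1$, the values at the two minimizers), so $\ell_\lambda$ is strictly convex in $x$.

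I expect the real work to be in the second step for a general measured lamination: organizing the developing-map computation so that the minimization over position functions is legitimate, the infimum is attained, and the dependence on $x$ is genuinely confined to the affine substitutions --- this is precisely the role of the thick train-track approximations of Section~\ref{section:length}, and is why a density argument from \cite{BBFS09} cannot be substituted when strict convexity is wanted. A secondary point requiring care is the clean verification that the model function $F$ is strictly convex and coercive; the prototype is the one-edge case, where $\cosh\ell=\sqrt{c}\,\cosh\bigl(\tfrac12(x_e-x_e^{0})\bigr)$ with $c\ge1$ and the second derivative is a positive multiple of $c-1$, vanishing identically exactly in the degenerate situation excluded by the transversality hypothesis.
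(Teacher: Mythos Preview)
Your high-level strategy---express $\ell_\lambda$ as a partial minimization of a jointly convex function of the shear variables and auxiliary position variables, then invoke preservation of (strict) convexity under partial minimization---is exactly the paper's strategy, and your remarks about $F$ being strictly convex and about the infimum being attained are in the right spirit. However, your concrete implementation has a genuine gap that collapses the argument back to the finitely-many-leaves case of \cite{BBFS09}.

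The gap is the sentence ``together with the ideal triangulation of $\Sigma$ dual to $\mu$, with edges $e_1,\dots,e_N$''. For a \emph{generic} complete geodesic lamination $\mu$ there is no finite ideal triangulation: the complementary regions are ideal triangles, but their sides are \emph{not} identified in pairs; two ideal triangles of $\widetilde\Sigma\setminus\widetilde\mu$ are separated by a strip containing a (typically Cantor) set of leaves of $\widetilde\mu$. Consequently your formula
\[
\ell_\gamma(x)=\min_{u}\sum_{j}F\bigl(u_{j-1}-x_{e(j-1)},u_j\bigr)
\]
with $F$ the chord length inside a \emph{single} ideal triangle and one shear variable $x_e$ per ``edge'' cannot be written down: a geodesic transverse to $\mu$ visits countably many ideal triangles, but its intersection with $\mu$ is a Cantor set, and between any two visited triangles there are infinitely many others. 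The dependence on $x$ is \emph{not} confined to substitutions of the form $u\mapsto u-x_e$ across a single edge.

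What the paper does to bridge this is precisely what your proposal omits. Using the train-track approximation $\hat\Theta$ of $\mu$ (not of $\lambda$), each leaf of $\lambda$ is cut into \emph{triangle segments} (your $F$-terms, inside components of $\Sigma\setminus\hat\Theta$) and \emph{branch segments} (geodesic arcs crossing a branch of $\hat\Theta$). The hard part is the branch segments: lifted to the universal cover, a branch segment is a chord in a \emph{strip} with an infinite wedge cut, and as one varies $h$ the two boundary geodesics of that strip move in a way governed by infinitely many wedge-shears. Sections~\ref{subsection:finiteconvexity}--\ref{subsection:infiniteconvexity} (culminating in Proposition~\ref{proposition:convex}) carry out a finite-approximation argument to show that the length of such a chord is strictly convex jointly in the shear coordinates and in the endpoint positions. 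Only after this is established do the branch terms become legitimate summands in the jointly convex functional $\underline\ell(\mathbf x,\mathbf y)$ of Proposition~\ref{proposition:convexity1}, and only then does your partial-minimization step go through.

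Your last paragraph misdiagnoses where the work lies: the delicate point is not the integration against the transverse measure of $\lambda$ (that part is soft once one has Theorem~\ref{theorem:minimal_length}), but the infinite structure of $\mu$ inside each branch. The thick train-track approximations of Section~\ref{section:length} are approximations of $\lambda$ and play no role here; the ones that matter are the approximations $\Theta$ of $\mu$ from Section~\ref{section:shearcoordinates}, together with the strip/wedge-cut analysis of Section~\ref{section:convexity}. Finally, a small confusion to flag: you write ``a thick train-track approximation $\tau$ of $\mu$ carrying $\lambda$'', but in this setup $\lambda$ is \emph{transverse} to the train-track approximating $\mu$, not carried by it.
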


As a consequence we describe the behaviour of the length of a measured geodesic lamination along a stretch line. 
In particular we obtain the following result whose words will be defined and explained later on.

\begin{corollary}
The length of the horocyclic lamination strictly decreases from infinity to zero along a stretch line.
\end{corollary}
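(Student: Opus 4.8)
The plan is to deduce the statement from a one-variable analysis along the stretch line, once two standard facts about Thurston's stretch maps are in place. Replacing if necessary $\mu$ by the complete lamination that directs the given stretch line — and using the shear coordinates attached to \emph{that} lamination — we may assume the stretch line is directed by $\mu$; write $(g_t)_{t\in\mathbb{R}}$ for it, with $g_0$ the basepoint. The two facts I would recall (from the theory reviewed in Section~\ref{section:shear} and in the last section) are: (i) in Thurston's shear coordinates on $\mathcal{T}(\Sigma)$ associated with $\mu$, the stretch line $(g_t)$ is an affine line $t\mapsto\sigma+t\,\dot\sigma$; and (ii) the horocyclic lamination $\lambda_t$ of the pair $(g_t,\mu)$ is obtained from $\lambda:=\lambda_0$ by rescaling its transverse measure, $\lambda_t=e^{-t}\lambda$. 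Since the length function is homogeneous of degree one on $\mathcal{ML}(\Sigma)$, the object to understand is
\[
h(t)\ :=\ \ell_{g_t}(\lambda_t)\ =\ e^{-t}\,\varphi(t),\qquad\varphi(t):=\ell_{g_t}(\lambda).
\]
The horocyclic foliation is, by construction, transverse to every leaf of $\mu$, so its geodesic representative $\lambda$ crosses every leaf of $\mu$; hence by~(i) and Theorem~\ref{theorem_principal} the function $\varphi$ is strictly convex on $\mathbb{R}$ — in particular continuous, with monotone one-sided derivatives, differentiable outside a countable set.

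Next I would show that $h$ is strictly decreasing. From $d_{\mathrm{Th}}(g_s,g_t)=t-s$ (valid for $s\le t$) one gets $\varphi(t)\le e^{t-s}\varphi(s)$, hence the one-sided derivatives satisfy $\varphi'_{\pm}(t)\le\varphi(t)$. The inequality is in fact \emph{strict}: equality would mean that the stretch flow through $g_t$ expands $\lambda$ at the maximal exponential rate, which is impossible since $\lambda$ is transverse to $\mu$ and hence not carried by $\mu$ (alternatively this is Thurston's formula for $\tfrac{d}{dt}\ell_{g_t}(\lambda)$, whose value is $<\ell_{g_t}(\lambda)$ as soon as $\lambda$ crosses $\mu$). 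Consequently $h'_{\pm}(t)=e^{-t}\bigl(\varphi'_{\pm}(t)-\varphi(t)\bigr)<0$ for every $t$, so $h$ is strictly decreasing on $\mathbb{R}$.

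It remains to identify the two limits, and this is where genuine geometry is needed. Both rest on the explicit form of the stretch map on the ideal triangles of $\Sigma\setminus\mu$: it stretches along $\mu$ by $e^{t}$ and contracts the horocyclic direction by $e^{-t}$, so that $\mathcal{F}_\mu(g_t)=e^{-t}\mathcal{F}_\mu(g_0)$, whence for $t\to+\infty$ the horocyclic leaves become arbitrarily short while for $t\to-\infty$ the total horocyclic measure blows up. Using in addition the fact — established in Section~\ref{section:length} — that the geodesic representative realises the infimum of the length over its measure class, one squeezes $h(t)=\ell_{g_t}(\lambda_t)$ between quantities tending to $0$ as $t\to+\infty$ and to $+\infty$ as $t\to-\infty$. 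The limit at $+\infty$ is the main obstacle: it amounts precisely to the quantitative contraction estimate for the stretch map, and a careful proof would most naturally re-derive that estimate inside the thick train-track model used throughout the paper. With strict monotonicity of $h$ together with $h(-\infty)=+\infty$ and $h(+\infty)=0$, the corollary follows.
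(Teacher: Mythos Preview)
Your claim (i) is not correct in the parameterization you use: in the shear coordinates attached to $\mu$ the stretch line through $h_0$ is $t\mapsto e^{t}\sigma_\Theta(h_0)$, a ray through the origin traversed exponentially, not an affine map $t\mapsto\sigma+t\dot\sigma$. The main theorem therefore yields strict convexity of $u\mapsto\ell_\lambda(\underline\gamma(u))$ for $u=e^{t}$, not of your $\varphi(t)$; the paper in fact remarks a few lines below the corollary that it has \emph{no} proof of convexity of $t\mapsto\ell_\alpha(\gamma(t))$ on its decreasing part. Your (ii) also has the sign reversed in the paper's conventions (shears scale by $e^{t}$, hence so does the transverse measure of the horocyclic foliation, giving $\lambda_t=e^{t}\lambda$), and in any case the passage through a varying $\lambda_t$ is a detour: the statement concerns a fixed representative $\lambda$ of the horocyclic lamination.

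The paper's argument is shorter and sidesteps all of this. It first quotes the asymptotics proved in \cite{Theret03}, \cite{Theret05}: $\ell_\lambda(\gamma(t))\to+\infty$ as $t\to-\infty$ and $\ell_\lambda(\gamma(t))\to 0$ as $t\to+\infty$ --- precisely the limits you try to re-derive by hand and admit you cannot complete. It then reparameterizes by $u=e^{t}$, so that the stretch line becomes the genuine affine ray $u\mapsto u\,\sigma_\Theta(h_0)$; since $\lambda$ meets the stump of $\mu$ transversely, the preceding theorem gives that $u\mapsto\ell_\lambda(\underline\gamma(u))$ is strictly convex on $(0,\infty)$. A strictly convex function on $(0,\infty)$ with limits $+\infty$ at $0^{+}$ and $0$ at $+\infty$ is automatically strictly decreasing (if its right derivative were ever $\ge 0$, strict convexity would send the function to $+\infty$), and strict monotonicity survives the increasing reparameterization $t\mapsto e^{t}$. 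Your route via the Thurston-metric bound $\varphi'_{\pm}(t)\le\varphi(t)$ is an interesting alternative for the monotonicity step, but the \emph{strict} inequality you need is left unjustified, and once the correct convexity (in $u$) and the cited limits are in place it is simply not needed.
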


We also recover Kerckhoff's theorem about strict convexity of lengths along earthquake paths in Teichm\"uller space.

\section{Length of measured laminations}
\label{section:length}

This section is devoted to the length of a measured lamination.
It is here established that, among the measured laminations that belong to the same measure class, the \emph{geodesic} one has the smallest length.

For the time being, a \textbf{lamination} refers to a foliation of a closed subset of $\Sigma$ by simple curves called the \textbf{leaves} of the lamination. When leaves are geodesic, we recover the notion of \emph{geodesic} lamination.
We shall later consider objects we shall also call laminations for which leaves need not be simple nor disjoint.

A \textbf{transverse measure} on a (geodesic) lamination $\lambda$ is a positive Radon measure defined on each compact arc transverse to $\lambda$ which is invariant under isotopies that leave the leaves of $\lambda$ globally fixed.

\subsection{Length of measured laminations}
\label{subsection:length}

Let us first define the length of a measured lamination $L$ of the surface $\Sigma$ equipped with a hyperbolic structure $h$.

Consider a finite collection of simple, compact arcs $A=\{\alpha_{1},\ldots,\alpha_{n}\}$ in $\Sigma$ with disjoint interiors, cutting the leaves of $L$ transversely into compact arcs. We require moreover that the endpoints of each arc of $A$ lie outside $L$. We call such a collection $A$ an {\bf arc system} for $L$. 

For each pair $i,j\in\{1,\ldots,n\}$, let $A_{ij}$ be the set of connected components of $L\setminus L\cap(\cup_{i=1}^{n}\alpha_{i})$ with one endpoint on $\alpha_{i}$ and the other on $\alpha_{j}$. 
Note that $A_{ij}=A_{ji}$ and that $A_{ij}$ is empty when all arcs contained in $L$ joining $\alpha_{i}$ and $\alpha_{j}$ meet another arc of the arc system. 
Each set $A_{ij}$ is equipped with the measure induced by the transverse measure of $L$.
The {\bf length} of $L$ with respect to $h$ (and to the arc system $A=\{\alpha_{1},\ldots,\alpha_{n}\}$) is defined by
$$
\ell_{L}(h) = \sum_{i=1}^{n}\sum_{j\geq i}\int_{A_{ij}}\ell_{x}(h)dL(x).
$$
Here, $\ell_{x}(h)$ denotes the length of one component $x$ of $A_{ij}$.
In somewhat less rigorous terms, the length of $L$ is obtained by cutting $L$ into arcs and by summing the lengths of all these arcs using the transverse measure of $L$.

\begin{lemma}
The definition of $\ell_{L}(h)$ does not depend upon the choice of the arc system $A=\{\alpha_{1},\ldots,\alpha_{n}\}$. 
\end{lemma}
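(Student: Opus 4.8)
The plan is to show that any two arc systems $A$ and $A'$ for $L$ give the same value of $\ell_{L}(h)$ by reducing to a common refinement. First I would observe that it suffices to treat the case where $A'$ is obtained from $A$ by a single elementary move of one of the following three types: (i) adding to $A$ one extra transverse arc; (ii) subdividing one arc $\alpha_{i}$ into two subarcs by cutting at a point outside $L$; (iii) sliding an endpoint of some $\alpha_{i}$ within a complementary region of $L$, or more generally performing an isotopy of $\alpha_{i}$ through arcs transverse to $L$ keeping endpoints outside $L$. Any arc system can be enlarged to contain any other up to isotopy by a finite sequence of such moves, so invariance under each move gives invariance in general; then to compare two arbitrary arc systems $A$ and $A'$ one passes through an arc system containing (an isotoped copy of) both.

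The key computation is that each elementary move leaves $\ell_{L}(h)$ unchanged. For a move of type (iii), isotopy-invariance of the transverse measure together with the fact that the hyperbolic length of each arc of $L$ varies continuously—indeed the integrand $\ell_{x}(h)$ is constant on each component of the relevant measured set, and the decomposition of $L$ is combinatorially unchanged—shows the sum is literally the same. For a move of type (ii), cutting $\alpha_{i}$ at a point $p\notin L$ replaces each arc $x$ of $L$ crossing $\alpha_{i}$ near the cut by two subarcs $x_{1},x_{2}$ with $\ell_{x_{1}}(h)+\ell_{x_{2}}(h)=\ell_{x}(h)$, and the transverse measure of $L$ on the two pieces of $\alpha_{i}$ adds up to the measure on $\alpha_{i}$; so the double sum is reorganized but its value is preserved (here one uses that the endpoints of $\alpha_{i}$ are outside $L$, so no arc of $L$ ends at the cut point). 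The move of type (i) is the substantive one: adding a new arc $\alpha_{n+1}$ subdivides some arcs of $L$ into more pieces, and one must check that the new contributions, summed with the transverse measure, again reassemble to the old total. This follows from additivity of hyperbolic length along each leaf of $L$ under subdivision, combined with the fact that the transverse measure is a well-defined Radon measure on each transverse arc (so the "mass" of leaves is conserved when we cut), exactly as in the classical argument for simple closed curves.

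The main obstacle is bookkeeping rather than conceptual: one must be careful that the sets $A_{ij}$ change in a controlled way under each move, in particular that no mass of $L$ is lost or double-counted when an arc of $L$ that previously ran from $\alpha_{i}$ to $\alpha_{j}$ gets chopped by a newly inserted arc into a chain running $\alpha_{i}\to\alpha_{n+1}\to\cdots\to\alpha_{n+1}\to\alpha_{j}$. The clean way to handle this is to integrate, over the transverse measure, the hyperbolic length of the full arc of $L$ between consecutive intersections with $\bigcup\alpha_{k}$, and to note that refining the arc system only refines this partition of each leaf into subarcs while the total hyperbolic length of the leaf between its two outermost intersection points with the enlarged system equals that for the original system plus the lengths of the finitely many newly created end-pieces—and those end-pieces lie in complementary regions of $L$ adjacent to the new arc and contribute measure zero, or more precisely are accounted for symmetrically on both sides. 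Making this last point precise—that the "overhang" created by a new arc cancels—is where I would spend the most care; everything else is the same additivity-of-length-plus-additivity-of-measure argument familiar from the weighted-curve case.
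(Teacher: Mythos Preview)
Your approach is essentially the paper's: single out the three elementary moves (isotopy respecting $L$, adding an arc, splitting an arc at a point off $L$) that preserve $\ell_{L}(h)$, then compare two arbitrary arc systems $A$ and $B$ via the common refinement $A\cup B$ obtained after making the arcs transverse and cutting at crossings. One clarification: your worry in the last paragraph about ``overhang'' end-pieces is unfounded, because the original arc system already cuts \emph{all} of $L$ into compact arcs, so inserting a new arc can only subdivide existing pieces---additivity of length along each leaf together with the transverse measure is the whole story, with nothing left over to cancel.
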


\begin{proof}
We first single out three operations on arc systems which do not change the length of $L$ as defined above.
\begin{enumerate}
\item a slight isotopy of the arcs of $A$ which respects $L$ and does not change the respective positions of these arcs. (The length of $L$ does not change because of the invariance of the transverse measure of $L$).

\item adding an arc to an existing arc system so that the union is still an arc system.

\item spliting an arc into two subarcs. (This yields another arc system which does not modify the length of $L$ as soon as we take care of taking the common endpoint of the two subarcs away from any leaf of $L$).
\end{enumerate}

Now let $B$ be another arc system for $L$. 
Up to performing a slight isotopy (1), we can assume that wherever an arc of $B$ intersects an arc of $A$, it does it transversely. 
Now by cutting arcs that cross each other transversely (3), we obtain an arc system (2) we denote by $A\cup B$. 
Because these operations do not change the length of $L$, we obtain that the length of $L$ with respect to $A$ and the length of $L$ with respect to $B$ are both equal to the length of $L$ with respect to $A\cup B$. 
Hence the length of $L$ with respect to $A$ is equal to the length of $L$ with respect to $B$.
\end{proof}

\subsection{Thick train-track approximations of measured geodesic laminations}
\label{subsection:thicktraintrack}

Provided with the notion of length for a measured lamination, we now focus on measured \emph{geodesic} laminations and give a way for getting natural curve systems to compute their lengths. 
This uses the notion of thick train-track approximations to measured geodesic laminations.
The construction to follow is due to Thurston \cite{Thurston76} \cite{Thurston86}.
\parpic{
\setlength\fboxrule{0.1pt}
\psfrag{N}{\tiny $N$}
\psfrag{l}{\tiny $\lambda$}
\fbox{\includegraphics[width=3cm, height=2.5cm]{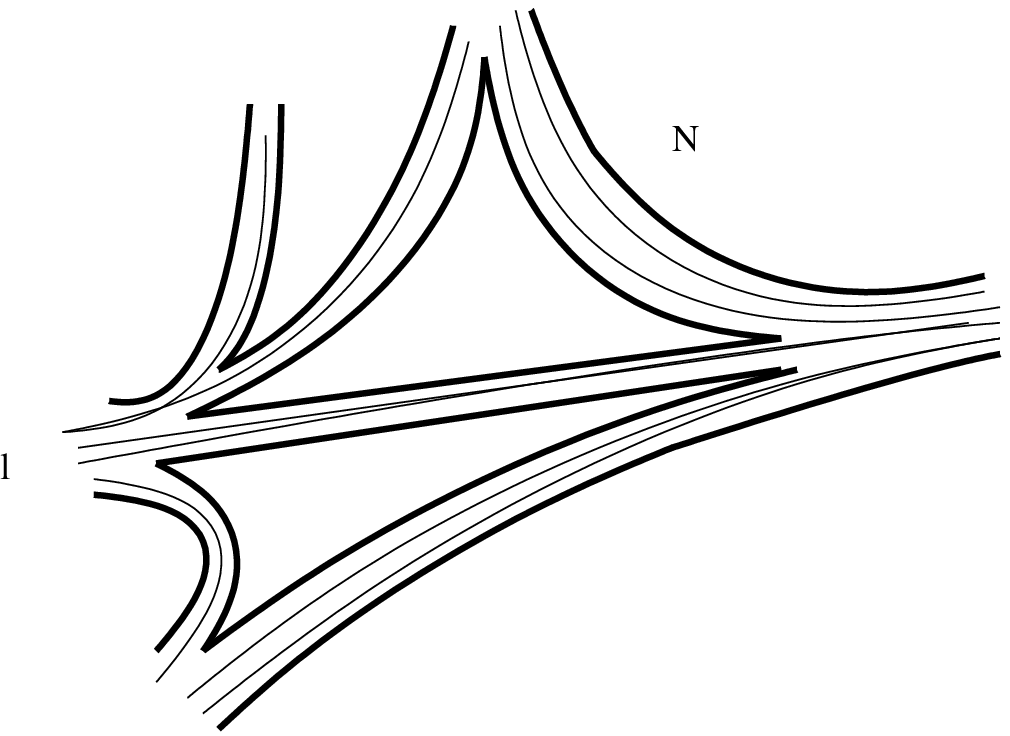}}
}
\noindent Assume that the surface $\Sigma$ is endowed with a hyperbolic structure $h$.
Let $\lambda$ be a geodesic lamination of $\Sigma$.
Choose a positive number $\epsilon$ small enough such that the $\epsilon$-regular neighbourhood of $\lambda$ has stable topological type,
which means that for any $\epsilon'\leq\epsilon$, the $\epsilon'$-regular neighbourhood is homotopically equivalent to the $\epsilon$-one.
Let $N$ denote this $\epsilon$-neighbourhood of $\lambda$. 

The neighbourhood $N$ is also obtained as follows.
\parpic{
\setlength\fboxrule{0.1pt}
\fbox{\includegraphics[width=2cm, height=2.5cm]{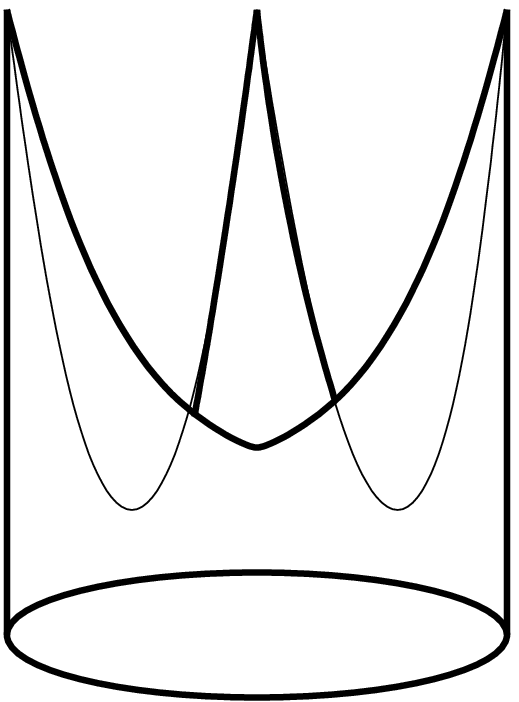}}
}
\noindent Consider the metric completion of a component of $\Sigma\setminus\lambda$:
it is a hyperbolic surface, $M$, with totally geodesic boundary.
Each component of $\partial M$ is either a simple closed geodesic or an infinite geodesic.
Moreover, any end of an infinite geodesic of $\partial M$ is asymptotic to an end of another one, 
and these asymptotic geodesics gather to form a \textbf{crown} boundary; 
one instance is when the component $M$ is an ideal polygon.
A \textbf{spike} is the region enclosed between two asymptotic half-geodesics. 
(See the picture showing the completion $M$ of a component with one three-spiked crown boundary and one closed boundary).
The number $\epsilon$ is then taken small enough so that the $\epsilon$-regular neighbourhoods in $M$ of the various types of boundaries of $\partial M$ are disjoint and their interiors all are topological annuli.
The neighbourhood $N$ in $\Sigma$ is then obtained by taking the union of $\lambda$ and of these various $\epsilon$-regular neighbourhoods for all components of $\Sigma\setminus\lambda$.

One advantage of the second description is that the neighbourhood $N$ can be equipped with a measured foliation obtained by foliating every annulus about boundary components:
for an annulus about a simple closed curve, foliate with geodesic segments perpendicular to both boundary components of the annulus. 
For an annulus about a crown boundary, first choose disjoint spikes of the annulus and then foliate each of them with pieces of horocycles perpendicular to the spike's sides.
Now between two foliated spikes remains a small non-foliated compact zone of the annular neighbourhood; foliate such a zone with leaves averaging smoothly between the leaves of the foliations of the spikes and which are \emph{perpendicular} to the \emph{geodesic} boundary of the annulus. 
The foliations of the various annuli yield a foliation of $N\setminus\lambda$.
Since the leaves of the foliation are by construction perpendicular to leaves of $\lambda$ and since the leaves of $\lambda$ form a Lipschitz field of directions,
the foliation on $N\setminus\lambda$ extends to a foliation over the whole neighbourhood $N$.
Let us denote by $F_{\lambda}(h)$ this foliation;
it is transverse to the boundary of $N$ and perpendicular to the leaves of $\lambda$. 
We further equip $F_{\lambda}(h)$ with the transverse measure defined by requiring that the transverse measure of an arc entirely contained in a leaf of $\lambda$ coincides with the length of that arc.
\parpic{
\setlength\fboxrule{0.1pt}
\psfrag{T}{\tiny $\Theta_{(\lambda,h)}$}
\fbox{\includegraphics[width=3cm, height=2.5cm]{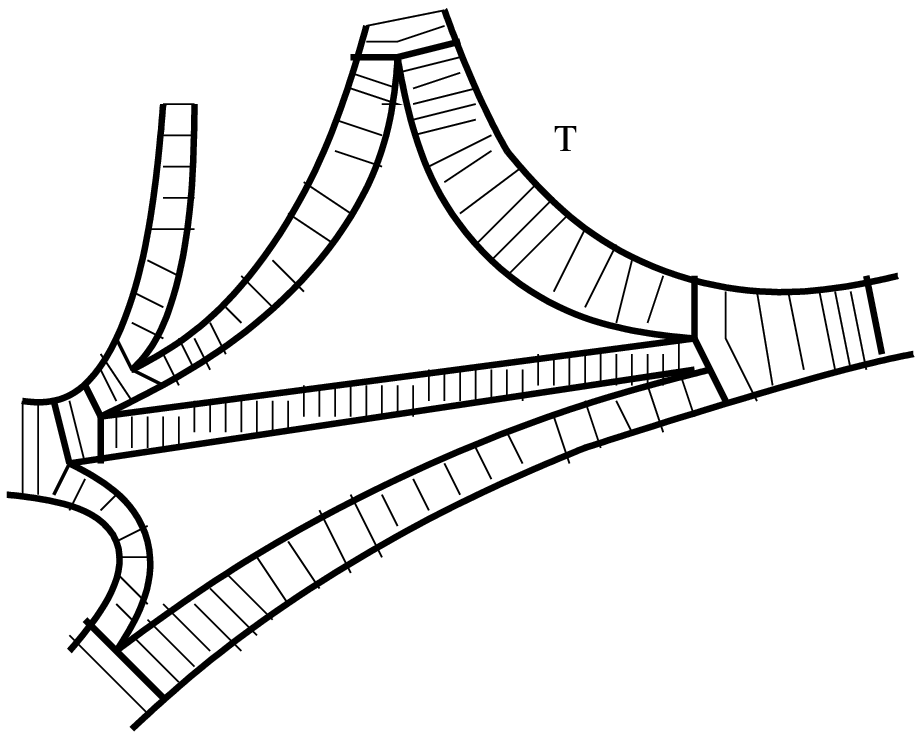}}
}\noindent The neighbourhood $N$ together with the foliation $F_{\lambda}(h)$ forms what is called a (thick) \textbf{train-track approximation} $\Theta_{(\lambda,h)}$ of $\lambda$.
The leaves of the foliation $F_{\lambda}(h)$ are called the \textbf{traverses} of the train-track $\Theta_{(\lambda,h)}$.
The boundary of the train-track $\Theta_{(\lambda,h)}$ is smooth except maybe at finitely many points where cusps arise.
The traverses passing through these singular points are said \textbf{singular}.

The singular traverses decompose $\Theta_{(\lambda,h)}$ into finitely many rectangles called the \textbf{branches} of $\Theta_{(\lambda,h)}$. 
An annular component of $\Theta_{(\lambda,h)}$ around an isolated closed leaf of $\lambda$ is also viewed as a single branch by dividing the annulus with an arbitrary (non-singular) traverse. 
Each branch $b$ is foliated by traverses and traversed by pieces of leaves of $\lambda$ whose lengths are \emph{all the same};
we call this common length the \textbf{width} of the branch $b$ and denote it by $w_{b}$.
The transverse measure of $\lambda$ gives a measure to any traverse of $\Theta_{(\lambda,h)}$.
Clearly the measure of all traverses in a given branch $b$ are equal and this number is called the \textbf{mass} of $b$ (induced by $\lambda$) and is denoted by $m_{b}$. 

\parpic{
\setlength\fboxrule{0.1pt}
\psfrag{T}{\tiny $\Theta_{\mu}$}
\psfrag{T'}{\tiny $\hat{\Theta}_{\mu}$}
\psfrag{m}{\tiny $\mu$}
\fbox{\includegraphics[width=2.5cm, height=5cm]{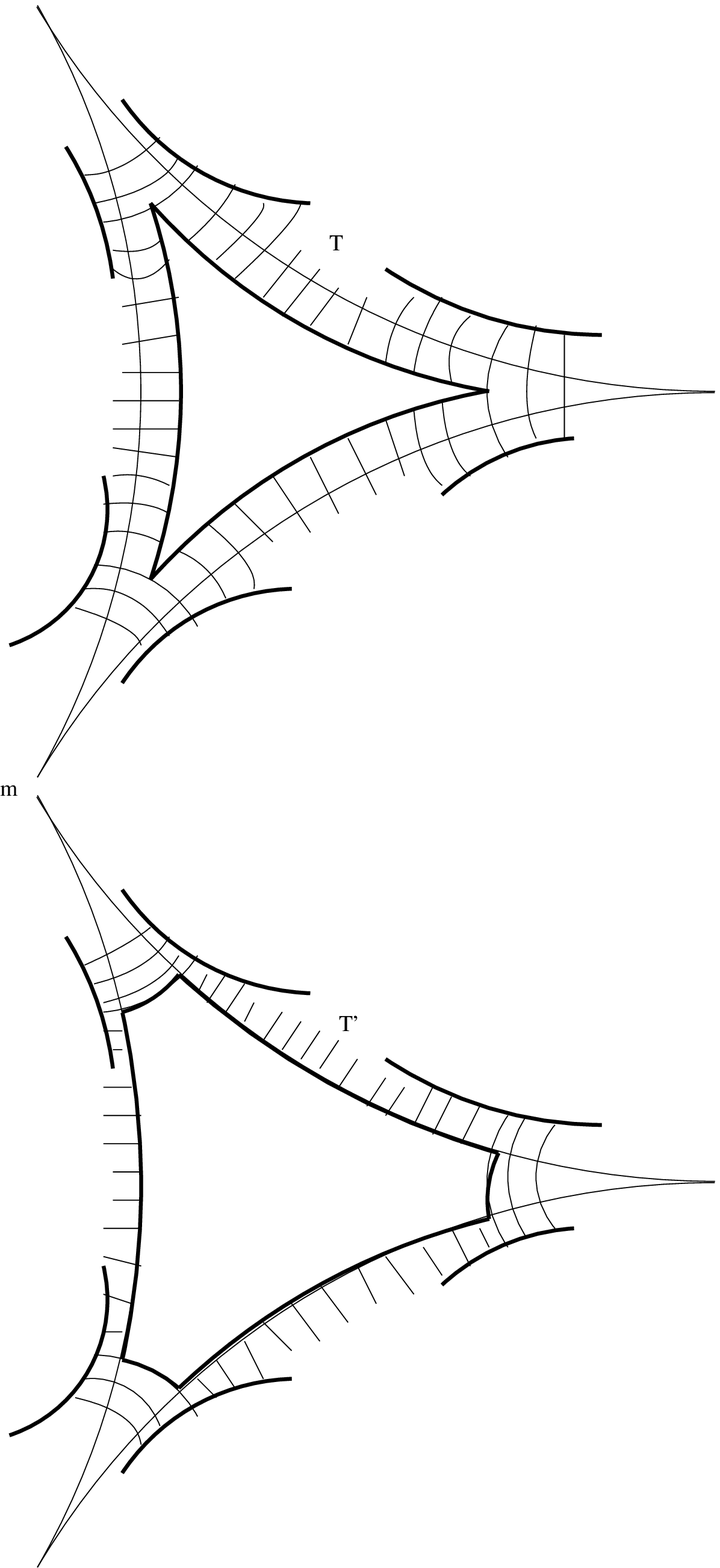}}
}\noindent Suppose here that the lamination $\mu$ is \emph{complete} and consider a train-track approximation $\Theta_{\mu}$ of $\mu$.
For later use we introduce a slight modification $\hat{\Theta}_{\mu}$ of $\Theta_{\mu}$.
Since $\mu$ is complete, each component of $\Sigma\setminus\Theta_{\mu}$ is a non-geodesic triangle contained in some ideal triangle of $\mu$.
We collapse the boundary of that triangle along the traverses of $\Theta_{\mu}$ onto the geodesic boundary of the ideal triangle containing it.
We thus get a train-track $\hat{\Theta}_{\mu}$ contained in $\Theta_{\mu}$.
Each component of $\Sigma\setminus\hat{\Theta}_{\mu}$ has the shape of a hexagon with three disjoint geodesic sides contained in $\mu$ and three other small horocyclic sides each contained in a singular traverse of $\Theta_{\mu}$.
Note that the widths and masses of $\hat{\Theta}_{\mu}$ are equal to those of $\Theta_{\mu}$.

\subsection{Length of measured geodesic laminations}

Let us now turn to measured \emph{geodesic} laminations.
Train-track approximations of a measured geodesic lamination $\lambda$ provide natural arc systems, namely singular traverses, for cutting the leaves of $\lambda$ and thus computing its length.

Consider a train-track approximation $\Theta_{(\lambda,h)}$ of $\lambda$ as above.
Let $b_{1},\cdots,b_{n}$ be the branches of $\Theta_{(\lambda,h)}$ and let $w_{1},\cdots,w_{n}$ and $m_{1},\cdots,m_{n}$ respectively denote the widths and the masses of the branches.
We use the arc system provided by the singular traverses of $\Theta_{(\lambda,h)}$. Any component $x$ cut out by this arc system is completely contained in a branch of $\Theta_{(\lambda,h)}$ and all components contained in the same branch $b_{i}$ have length $w_{i}$. Therefore, if $A_{ij}$ is the set of components of $L\cap b_{k}$ contained in the branch $b_{k}$ bordered by the traverses $\alpha_{i}$, $\alpha_{j}$, we get $\int_{A_{ij}}\ell_{x}(h)dL(x)=w_{k}\int_{A_{ij}}dL(x)=w_{k}\cdot m_{k}$.
The length of the measured geodesic lamination $\lambda$ is then given by
$$
\ell_{\lambda}(h) = \sum_{k=1}^{n} w_{k}\cdot m_{k} = W\cdot M,
$$
where $W$ and $M$ respectively denote the $n$-vectors of widths and of masses induced by $\lambda$.
This shows in particular that length is a linear function of masses and widths.

\begin{remark}
The length $\ell_{\lambda}(h)$ of a measured \emph{geodesic} lamination $\lambda$ with respect to the hyperbolic structure $h$ can also be elegantly defined in more abstract terms using the language of geodesic currents, see \cite{Bonahon88} and below. 
\end{remark}

\subsection{More information about laminations}
\label{subsection:moreinfo}

Let us start with a definition. Two \emph{measured} laminations are in the same {\bf measure class} if they are homotopic and if this homotopy transports one transverse measure on the other.
Now that we have seen a way of computing the length of a given measured \emph{geodesic} lamination $\lambda$, we want to extend it to a way of computing the length of any measured lamination $L$ in the same measure class as $\lambda$. 
Actually, as we shall see, the leaves of $L$ need not be simple nor disjoint (we may however require that the leaves of $L$ are piecewise smooth).
The ``lamination" $L$ is thus viewed as a collection of curves in $\Sigma$ (not necessarily simple nor disjoint) each of which being homotopic to a unique leaf of the given geodesic lamination $\lambda$, so that the leaves of $L$ and of $\lambda$ are in one-to-one correspondence. 
Such a ``lamination" is better understood by looking at its preimage in the universal covering and by considering the collection of  endpoints of its leaves, for leaves of $\lambda$ and of $L$ have the same endpoints. 
The material of this paragraph is borrowed from Bonahon's great paper \cite{Bonahon88} about geodesic currents and is aimed at giving a way of equipping any lamination $L$ homotopic to a given measured geodesic lamination $\lambda$ with a transverse measure so that $L$ and $\lambda$ fall in the same measure class.

Consider a geodesic lamination $\lambda$ of the surface $\Sigma$ equipped with some hyperbolic structure.
This structure yields an identification between the universal covering $\widetilde{\Sigma}$ of $\Sigma$ and the hyperbolic plane $\mathbb{H}^{2}$.
Let us fix such an identification.
In this paper, we shall mainly use the unit disc model of the hyperbolic plane.
Let $\widetilde{\lambda}$ be the preimage of $\lambda$ in $\widetilde{\Sigma}$.
Each leaf of $\widetilde{\lambda}$ has two (unordered) endpoints on  $S_{\infty}$, the boundary at infinity of $\mathbb{H}^{2}$. 
The set of endpoints of $\widetilde{\lambda}$ thus forms a closed subset $G(\lambda)$ of 
$(S_{\infty}\times S_{\infty}-\Delta)/\mathbb{Z}_{2}$, where $\Delta$ is the diagonal of 
$S_{\infty}\times S_{\infty}$ and $\mathbb{Z}_{2}$ acts on $S_{\infty}\times S_{\infty}$ by swapping both factors.
The closed subset $G(\lambda)$ is invariant under the action of the fundamental group $\pi_{1}(\Sigma)$ through deck transformations.
We thus get a map $\lambda\mapsto G(\lambda)$, well-defined up to positive isometry of $\mathbb{H}^2$.
Now for any lamination $L$ homotopic to $\lambda$ we have $G(L)=G(\lambda)$.

Assume now that the geodesic lamination $\lambda$ is equipped with a transverse measure. The latter induces a measure on $G(\lambda)$ invariant by the action of $\pi_{1}(\Sigma)$ as follows.
First, equip the preimage $\widetilde{\lambda}$ of $\lambda$ in $\widetilde{\Sigma}$ with the $\pi_{1}(\Sigma)$-invariant transverse measure induced by the transverse measure of $\lambda$.
Consider a geodesic $g$ of $\widetilde{\lambda}$ and a compact arc $k$ transverse to $g$.
The set of leaves of $\widetilde{\lambda}$ that intersect $k$ yields a compact subset of $G(\lambda)$ whose measure is defined to be the $\widetilde{\lambda}$-transverse measure of $k$. 
This defines a measure for neighbourhoods of $g$ in $G(\lambda)$ and thus a $\pi_{1}(\Sigma)$-invariant measure on $G(\lambda)$.
A $\pi_{1}(\Sigma)$-invariant measure with compact support on $(S_{\infty}\times S_{\infty}-\Delta)/\mathbb{Z}_{2}$ is called a {\bf geodesic current}.
A measured geodesic lamination $\lambda$ of $\Sigma$ thus defines a geodesic current $G(\lambda)$ and any lamination in the same measure class as $\lambda$ yields the same geodesic current. More generally, any collection of curves in $\Sigma$ which is homotopic to $\lambda$ and which can be equipped with a transverse measure preserved by this homotopy defines the same geodesic current.

We shall conversely encounter ``laminations" $L$ which are collections of curves in one-to-one correspondence with the leaves of a given measured geodesic lamination $\lambda$ such that each curve is homotopic to the corresponding leaf of $\lambda$. 
We need to equip $L$ with a transverse measure so that $L$ and $\lambda$ fall into the same measure class.
To do this, first recall that $\lambda$ defines a geodesic current on $G(\lambda)$.
Because $L$ and $\lambda$ are homotopic, we have $G(L)=G(\lambda)$.
The measure on $G(\lambda)$ easily induces a transverse measure on $L$: 
define the measure of an arc $k$ transverse to $L$ as the measure of the endpoints of the leaves of $L$ crossing $k$.

\subsection{Length of a measured lamination in a given measure class}

Let $\lambda$ be a measured \emph{geodesic} lamination of $\Sigma$. 
In order to express the length of any measured lamination $L$ in the same measure class as $\lambda$, we take a train-track approximation of $\lambda$ so as to obtain an arc system for $L$.

So consider a train-track approximation $\Theta=\Theta_{(\lambda,h)}$ of $\lambda$ and a leaf $l$ of $L$.
This leaf $l$ is homotopic to the leaf $\lambda_{l}$ of $\lambda$.
We want to cut the leaf $l$ into pieces, each of whose is associated to a branch of $\Theta_{(\lambda,h)}$, then to sum up the lengths of all pieces associated to a given branch.
However, when $l$ has a lot of meanders and loops lying outside $\Theta_{(\lambda,h)}$, it is at first sight not obvious how to know which pieces of $l$ should be associated to a particular branch of $\Theta_{(\lambda,h)}$.
Using the universal covering (identified with the hyperbolic plane) and the geodesic lamination $\lambda$, there is a  way of sorting this out.
\parpic{
\setlength\fboxrule{0.1pt}
\psfrag{s}{\tiny $S_{i}$}
\psfrag{b}{\tiny $\widetilde{b}_{i}$}
\psfrag{l}{\tiny $\widetilde{l}$}
\fbox{\includegraphics[width=2.5cm, height=2.5cm]{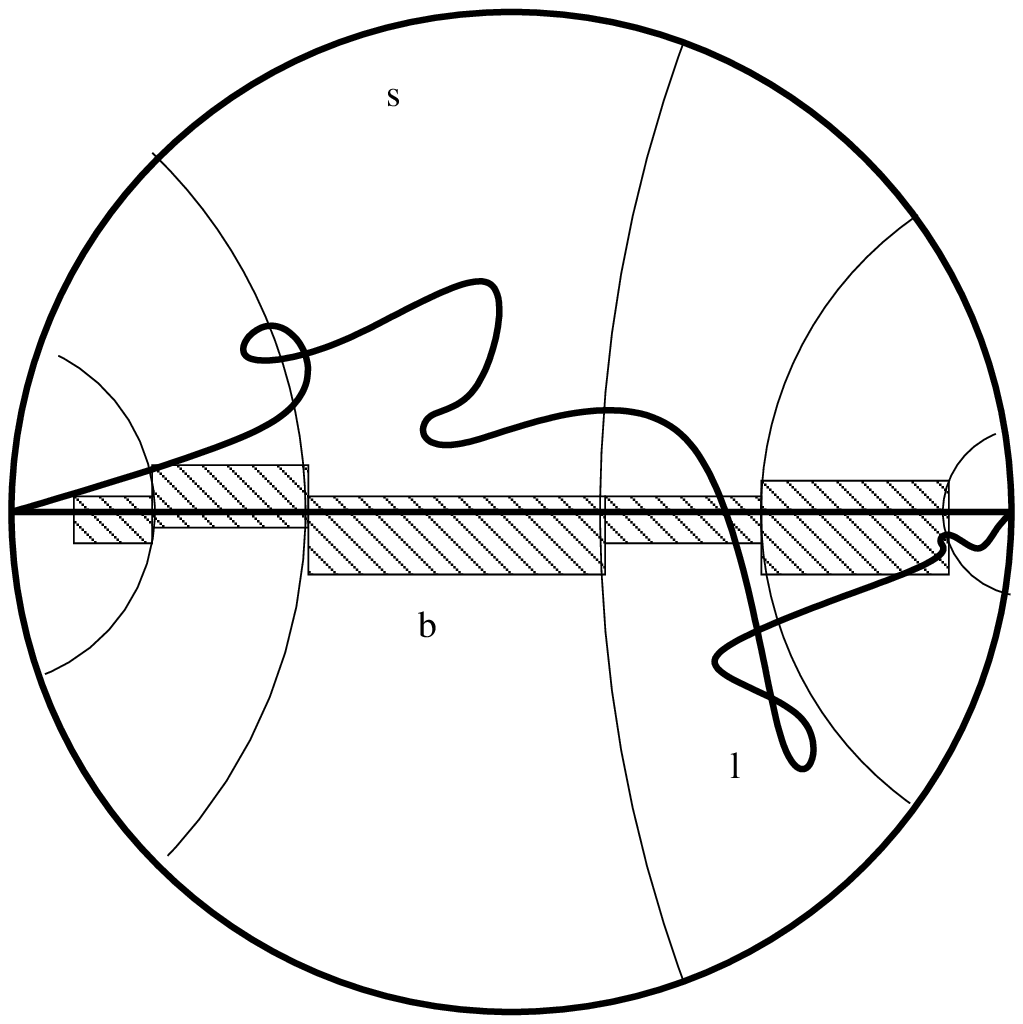}}
}
\noindent Consider a lift $\widetilde{l}$ of $l$ and the lift $\widetilde{\lambda_{l}}$ of $\lambda_{l}$ that has the same endpoints as $\widetilde{l}$. 
The geodesic $\widetilde{\lambda_{l}}$ goes through a countable collection of branches (shaded on the picture) $\widetilde{b}_{i}$ of the preimage of $\Theta_{(\lambda,h)}$ in the universal covering. 
These branches cut the geodesic $\widetilde{\lambda_{l}}$ into segments $]x_{i};y_{i}[=\widetilde{b}_{i}\cap\widetilde{\lambda_{l}}$.
The hyperbolic plane is covered by the strips $S_{i}$ bordered by the geodesics passing through the points $x_{i}, y_{i}$ perpendicularly to $\widetilde{\lambda_{l}}$. 
These strips have disjoint interiors and they cut, in a well-defined manner, the leaf $\widetilde{\lambda_{l}}$ into countably many pieces each of which is attached to a branch of $\Theta_{(\lambda,h)}$ crossed by $\widetilde{\lambda_{l}}$.

Now let us go back down to our hyperbolic surface and pick a branch $b_{i}$ of $\Theta_{(\lambda,h)}$ ($1\leq i\leq n$). 
Let $x$ be a connected component of $b_{i}\cap\lambda$;
it is contained in the leaf $\lambda_{x}$ of $\lambda$.
Lift the leaf $\lambda_{x}$ and the branch $b_{i}$ so that the geodesic lift of $\lambda_{x}$ crosses the lift of $b_{i}$.
There is a unique leaf $l_{x}$ of $L$ that has the same endpoints as $\lambda_{x}$.
We get from what has just been explained a countable collection $x_{j}$, $j\in J(x,b_{i})$, of pieces of the leaf $l_{x}$ attached to the branch $b_{i}$ and to the component $x$. 
By choosing an orientation on $l_{x}$ we get a total ordering on the set $J(x,b_{i})$ and thus a bijection with a subset of $\mathbb{N}$. 
We equip $J(x,b_{i})$ with the counting measure $\delta$.

The length of $L$ can thus be expressed by summing up all these pieces for each branch $b_{i}$, $1\leq i\leq n$, of $\Theta_{(\lambda,h)}$:
$$
\ell_{L}(h)=\sum_{i=1}^{n}\int_{\mathcal{C}(\lambda\cap b_{i})}\left(\int_{J(x,b_{i})}\ell_{x_{j}}(h)\,\delta(j)\right)d\lambda(x),
$$
where $\mathcal{C}(\lambda\cap b_{i})$ denotes the set of connected components $x$ of $\lambda\cap b_{i}$.
This formula naturally extends the definition of length of measured geodesic laminations. 
It also covers the classical notion of length of (non necessarily simple) closed curves, or of any finite collection of such. 
It is easily checked that this definition makes sense and that the leaves of $L$ are cut in a well-defined manner.

Let us close up this section by giving our most general result about length of measured laminations.

\begin{theorem}
\label{theorem:minimal_length}
Let $\lambda$ be a measured geodesic lamination of the surface $\Sigma$
and let $L$ be a measured lamination in the measure class of $\lambda$.
Then the length of $L$ with respect to any fixed hyperbolic structure on $\Sigma$ is greater or equal to the length of $\lambda$ and equality holds if and only if $L$ is equal to $\lambda$.
\end{theorem}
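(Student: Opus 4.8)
The plan is to compare the two lengths branch by branch, using a train‑track approximation $\Theta=\Theta_{(\lambda,h)}$ of $\lambda$ together with the leaf‑by‑leaf, piece‑by‑piece decomposition of $L$ introduced above. Recall that, with $b_{1},\dots,b_{n}$ the branches of $\Theta$ and $w_{i},m_{i}$ their widths and masses,
\[
\ell_{\lambda}(h)=\sum_{i=1}^{n}w_{i}\,m_{i},\qquad
\ell_{L}(h)=\sum_{i=1}^{n}\int_{\mathcal{C}(\lambda\cap b_{i})}\Bigl(\int_{J(x,b_{i})}\ell_{x_{j}}(h)\,\delta(j)\Bigr)\,d\lambda(x).
\]
The geometric fact driving the proof is the following. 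In the universal covering, fix a component $x$ of $\lambda\cap b_{i}$, a geodesic lift $\widetilde{\lambda_{x}}$ of $\lambda_{x}$ crossing a lift $\widetilde{b}_{i}$ of $b_{i}$, write $]x_{i};y_{i}[\,=\widetilde{b}_{i}\cap\widetilde{\lambda_{x}}$, and let $S_{i}$ be the associated strip. Its two bounding geodesics are perpendicular to $\widetilde{\lambda_{x}}$ at $x_{i}$ and $y_{i}$; hence they are ultraparallel, their common perpendicular is the subsegment $]x_{i};y_{i}[$, and the distance between them equals the length of $]x_{i};y_{i}[$, which is exactly the width $w_{i}$ of $b_{i}$. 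By uniqueness of the common perpendicular of two ultraparallel geodesics, any arc crossing $S_{i}$ from one bounding geodesic to the other has length $\ge w_{i}$, with equality precisely when the arc is $]x_{i};y_{i}[\subset\widetilde{\lambda_{x}}$.

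Next I would run the inequality. Fix a branch $b_{i}$ and a component $x\in\mathcal{C}(\lambda\cap b_{i})$. The leaf $\widetilde{l_{x}}$ of $\widetilde{L}$ has the same endpoints on $S_{\infty}$ as $\widetilde{\lambda_{x}}$; since $\widetilde{\lambda_{x}}$ crosses $S_{i}$, these endpoints lie on the two arcs cut out of $S_{\infty}$ by the boundary geodesics of $S_{i}$, one on each side, so $\widetilde{l_{x}}$ must cross $S_{i}$. Therefore among the pieces $x_{j}$, $j\in J(x,b_{i})$, at least one crosses $S_{i}$ and so has length $\ge w_{i}$, while the remaining pieces have nonnegative length; thus $\int_{J(x,b_{i})}\ell_{x_{j}}(h)\,\delta(j)\ge w_{i}$ for every $x$. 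Integrating against $d\lambda$ over $\mathcal{C}(\lambda\cap b_{i})$, which has total mass $m_{i}$, and summing over $i$ yields $\ell_{L}(h)\ge\sum_{i}w_{i}m_{i}=\ell_{\lambda}(h)$.

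For the equality case, note that $\ell_{L}(h)-\ell_{\lambda}(h)$ is a finite sum of integrals of the nonnegative functions $x\mapsto\bigl(\int_{J(x,b_{i})}\ell_{x_{j}}(h)\,\delta(j)\bigr)-w_{i}$, so $\ell_{L}(h)=\ell_{\lambda}(h)$ forces, for each $i$ and $d\lambda$‑a.e.\ $x$ in $b_{i}$, that the total length of the pieces of $\widetilde{l_{x}}$ in $\overline{S_{i}}$ equals $w_{i}$; by the rigidity above this occurs only when $\widetilde{l_{x}}\cap\overline{S_{i}}$ is the single segment $]x_{i};y_{i}[\subset\widetilde{\lambda_{x}}$. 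Letting $i$ vary and using that the strips tile $\mathbb{H}^{2}$, the corresponding leaf of $L$ then coincides with $\lambda_{x}$. The remaining step is to upgrade this from $d\lambda$‑almost every leaf to every leaf: here I would use that the transverse measure of a measured lamination has full support, together with the fact that the leaves of $L$, forming a lamination, vary continuously while having exactly the endpoints of the corresponding leaves of $\lambda$ (as $G(L)=G(\lambda)$), so the closed condition $\widetilde{l_{x}}=\widetilde{\lambda_{x}}$, holding on a dense set of leaves, holds on all of them; hence $L=\lambda$. The converse implication is immediate. I expect this last density/continuity argument, rather than the essentially one‑line distance estimate, to be the step requiring the most care.
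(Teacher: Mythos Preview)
Your proof is correct and follows essentially the same route as the paper's: both use the train-track decomposition, the strip geometry to show each leaf of $L$ contributes at least $w_i$ over each branch, and integrate to get $\ell_L(h)\ge\ell_\lambda(h)$. Your treatment of the equality case is in fact more careful than the paper's, which dispatches it in one sentence (``by inspection of the above inequalities, it is easily checked\dots''); the subtlety you flag about upgrading from $d\lambda$-a.e.\ to all leaves is real and worth the attention you give it.
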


\begin{proof}
For any $i\in\{1,\cdots,n\}$ and any component $x\in\mathcal{C}(\lambda\cap b_{i})$, the piece $x_{j}$, $j\in J(x,b_i)$, is a curve contained in the strip $S_{i}$ whose endpoints belong to the boundary of $S_{i}$.
The strip $S_{i}$ is bordered by two geodesics connected perpendicularly by the geodesic segment $\lambda_{x}\cap b_{i}$.
The length of this geodesic segment is $w_{i}$, the width of $b_{i}$.
For all $j\in J(x,b_i)$, the length of the curve $x_{j}$ hence satisfies
\begin{align*}
\ell_{x_{j}}(h)&\geq w_{i} &&\text{if the endpoints of $x_{j}$ are on different boundary components of $S_{j}$}\\
\ell_{x_{j}}(h)&\geq 0 &&\text{if the endpoints of $x_{j}$ are on the same boundary component of $S_{j}$.}
\end{align*}

Let $ J^{0}(x,b_i)$ be the subset of $J(x,b_i)$ indexing those curves $x$ whose endpoints lie on different sides of the strip.
We have $\vert J^{0}(x,b_i)\vert\geq1$, since there is at least one component $x_{j}$ whose endpoints lie on different boundary components. 
Therefore,
$$
\int_{J(x,b_{i})}\ell_{x_{j}}(h)\,\delta(j)\geq\int_{J^{0}(x,b_{i})}\ell_{x_{j}}(h)\,\delta(j)\geq w_{i}\int_{J^{0}(x,b_{i})}\delta(j)=w_{i}\vert J^{0}(x,b_{i})\vert\geq w_{i}.
$$
It follows that
$$
\ell_{L}(h)\geq\sum_{i=1}^{n}\int_{\mathcal{C}(\lambda\cap b_{i})}w_{i}d\lambda(x)=\sum_{i=1}^{n}w_{i}\left(\int_{\mathcal{C}(\lambda\cap b_{i})}d\lambda(x)\right).
$$
But $\int_{\mathcal{C}(\lambda\cap b_{i})}d\lambda(x)=m_{i}$, the mass of the branch $b_{i}$.
We therefore get
$$
\ell_{L}(h)\geq\sum_{i=1}^{n}w_{i}\cdot m_{i}=\ell_{\lambda}(h).
$$
By inspection of the above inequalities, it is easily checked that this infimum is uniquely realized by $\lambda$. 
The proof is over.
\end{proof}

\section{Shears between pairs of ideal triangles}
\label{section:shear}

\subsection{Horocyclic foliation}

Before giving the description of shear coordinates, we need to introduce the horocyclic foliation associated to a complete geodesic lamination $\mu$ and to a given hyperbolic structure $h$ on $\Sigma$.
As it will clearly appear, the construction of the horocyclic foliation is very similar to that of a thick train-track approximation of a geodesic lamination $\mu$ taking however into account completeness of $\mu$.

So fix a \emph{complete} geodesic lamination $\mu$ of $\Sigma$.
Consider a hyperbolic structure $h$ on $\Sigma$ and straighten $\mu$ to a genuine geodesic lamination for this structure.
Each component of $\Sigma\setminus\mu$ is isometric to the interior of an ideal triangle of the hyperbolic plane.
\parpic{
\setlength\fboxrule{0.1pt}
\fbox{\includegraphics[width=2cm, height=2.5cm]{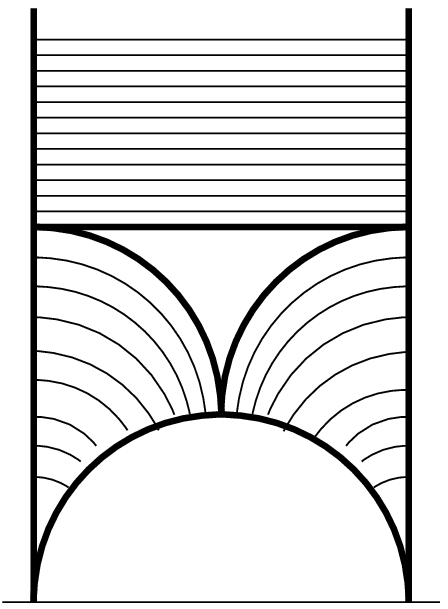}}
}
\noindent Now start by foliating symetrically the three spikes of every ideal triangle with arcs of horocycles perpendicular to the edges of the ideal triangles. 
Eventually, it necessarily remains a non-foliated region which is a triangle bounded by three isometric arcs of horocycles meeting tangentially on the edges of the ideal triangle.
The three vertices of the non-foliated region are called the \textbf{distinguished points} of the ideal triangle.
The leaves of the geodesic lamination $\mu$ form a Lipschitz field of directions, 
so the partial foliations defined in the interior of all ideal triangles extend over the lamination $\mu$.
This yields a partial foliation of $\Sigma$, called the \textbf{horocyclic foliation} and denoted by $F_{\mu}(h)$, which we equip with the transverse measure obtained by requiring that the measure of an arc contained in a leaf of $\mu$ equals its length.  
We denote the equivalence class of $F_{\mu}(h)$ in $\mathcal{MF}(\Sigma)$ by the same symbols.
We thus get a map

\begin{eqnarray*}
F_{\mu}\ :\ \mathcal{T}(\Sigma)&\to&\mathcal{MF}(\Sigma)\\
h&\mapsto&F_{\mu}(h).
\end{eqnarray*}

Thurston showed that this map is a homeomorphism onto its image, the latter being the space $\mathcal{MF}(\mu)$ of equivalence classes of measured foliations that are transverse to $\mu$ (see \cite{Thurston86}). 
Since this result is the corner stone of Thurston's theory of shear coordinates, we state it here as a theorem.
It says that hyperbolic structures on $\Sigma$ are completely understood through measured foliations transverse to $\mu$ (actually, knowing the foliations in sufficiently fine neighbourhoods of $\mu$ is sufficient) and that any foliation in $\mathcal{MF}(\mu)$ is, in a unique way, the horocyclic foliation of some hyperbolic structure.

\begin{theo}[Thurston \cite{Thurston86}]
Let $\Sigma$ be a complete hyperbolic surface of finite area and let $\mu$ be a complete geodesic lamination of $\Sigma$. 
Let $\mathcal{MF}(\mu)$ be the space of measured foliations of $\Sigma$ transverse to $\mu$ and standard near the cusps and let $\mathcal{T}(\Sigma)$ be the Teichm\"uller space of $\Sigma$, that is, the space of isotopy classes of complete hyperbolic structures of finite area on $\Sigma$. 
The map 
\begin{eqnarray*}
F_{\mu}\ :\ \mathcal{T}(\Sigma)&\to&\mathcal{MF}(\mu)\\
h&\mapsto&F_{\mu}(h)
\end{eqnarray*}
that associates to a point $h$ of Teichm\"uller space its horocyclic foliation $F_{\mu}(h)$ is a homeomorphism. 
\end{theo}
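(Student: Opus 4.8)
The plan is to construct an explicit inverse of $F_\mu$ and then to promote the resulting continuous bijection to a homeomorphism by an invariance-of-domain argument. I would begin by transporting the whole question to the thick train-track approximation $\Theta_\mu$ of $\mu$ and its collapse $\hat\Theta_\mu$ introduced above: since $\mu$ is complete, the combinatorics of $\Sigma\setminus\mu$ --- its $-2\chi(\Sigma)$ ideal triangles, glued in pairs along $-3\chi(\Sigma)$ geodesic edges --- is recorded by the finitely many branches and singular traverses of $\Theta_\mu$, and both the horocyclic foliation $F_\mu(h)$ and the \emph{shearing data} (the signed shifts between the distinguished points of facing ideal triangles) are read off as the transverse measures these induce on the singular traverses. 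Thus the statement is equivalent to saying that $h\mapsto\sigma_h$, the assignment to a hyperbolic structure of its shear cocycle --- which encodes the same information as $F_\mu(h)$ --- is a homeomorphism of $\mathcal T(\Sigma)$ onto the relevant space of transverse cocycles carried by $\mu$. Continuity of this assignment is immediate from the construction of $F_\mu(h)$, so the work is in the bijectivity and in the continuity of the inverse.

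For injectivity I would reconstruct $h$ from $\sigma_h$. Lifting to $\mathbb H^2=\widetilde\Sigma$, the developing map of $h$ sends the closure of each component of $\widetilde\Sigma\setminus\widetilde\mu$ isometrically onto an ideal triangle, and the only freedom in assembling two triangles across a shared edge is the amount of shear, which is exactly the number recorded by $\sigma_h$ (equivalently, the mass that $F_\mu(h)$ puts on the relevant traverse, with a sign). Hence if $F_\mu(h_1)=F_\mu(h_2)$ the two developing maps can be normalized to agree on the preimage of $\Sigma\setminus\mu$; they then agree on all of $\mathbb H^2$ because $\widetilde\mu$ carries no area and $\mu$ is a Lipschitz line field, and the resulting $\pi_1(\Sigma)$-equivariant isometry is isotopic to the identity, so $h_1=h_2$ in $\mathcal T(\Sigma)$.

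For surjectivity, given $F\in\mathcal{MF}(\mu)$ I would build a holonomy $\rho\colon\pi_1(\Sigma)\to\mathrm{PSL}(2,\mathbb R)$ together with a $\rho$-equivariant developing map by gluing one ideal triangle of $\mathbb H^2$ per lift of each complementary component of $\mu$, shearing along each shared edge by the transverse measure $F$ assigns to a short arc crossing the corresponding branch of $\Theta_\mu$. When $\mu$ has finitely many leaves this is a finite gluing and one checks directly that the shears close up around each vertex of the pattern, so $\rho$ is well defined; the quotient is a hyperbolic structure $h$ on the closed surface $\Sigma$ --- automatically complete and of finite area --- and unwinding the construction of the horocyclic foliation gives $F_\mu(h)=F$. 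For a general complete $\mu$ the gluing is infinite, and there I would pass to Bonahon's H\"older transverse cocycle formalism: $F$ determines such a cocycle, the cocycle determines an equivariant family of ideal triangles, and a compactness argument over finer and finer finite train-track carriers of $\mu$ shows the developing maps subconverge to one defining a hyperbolic structure with horocyclic foliation $F$; Theorem \ref{theorem:minimal_length}, applied to the train-track widths, is convenient for the length control that keeps the limit from degenerating.

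It then remains to see that the inverse is continuous, which I would obtain from invariance of domain: $\mathcal T(\Sigma)$ and $\mathcal{MF}(\mu)$ are both manifolds of dimension $-3\chi(\Sigma)=6g-6$, so the continuous injection $F_\mu$ between them is automatically open, hence a homeomorphism onto its open image, and the surjectivity step --- or, alternatively, properness of $F_\mu$ via the same length estimates --- identifies that image with all of $\mathcal{MF}(\mu)$. The step I expect to be the genuine obstacle is the surjectivity in the non-finite-leaved case: giving rigorous meaning to the ``gluing of uncountably many ideal triangles'', i.e. proving that the transverse cocycle really integrates to an equivariant developing map and that the structure one obtains is complete with \emph{precisely} the prescribed horocyclic foliation. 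This is exactly where the careful thick train-track approximations and the passage to the limit have to carry the argument.
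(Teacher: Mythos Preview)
The paper does not give its own proof of this theorem; it is quoted from Thurston \cite{Thurston86}, and only the strategy is sketched (Remark~\ref{remark:technical} and, in more detail, Section~\ref{section:developing}). That sketch, like your outline, constructs the inverse of $F_\mu$ explicitly: one fixes a base structure $h_0$, passes to a strip in the universal cover bounded by the facing sides of two ideal triangles, takes a \emph{finite approximation} of the wedge cut induced by $\widetilde\mu$, glues the finitely many wedges edge-to-edge according to the prescribed shears, and then shows that the resulting geometric realizations (together with their horocyclic foliations) converge in the Hausdorff sense as the approximation gets finer. The convergence step is exactly Thurston's Proposition~4.1, and the paper refers there rather than reproving it. So your identification of the ``genuine obstacle'' is right, and your plan via finer and finer train-track carriers is the same mechanism the paper describes.

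Two points in your outline deserve correction. First, the appeal to Theorem~\ref{theorem:minimal_length} is misplaced: that result compares lengths of measured laminations within a fixed measure class and says nothing about controlling the geometry of the approximating strips; the estimates that prevent degeneration are the exponential decay of horocyclic lengths in the spikes, which Thurston handles directly. Second, your invariance-of-domain shortcut presupposes that $\mathcal{MF}(\mu)$ is already known to be a manifold of dimension $3|\chi(\Sigma)|$; this is essentially what the shear-coordinate analysis establishes, so invoking it here is close to circular. In the paper's account (and Thurston's), continuity of the inverse comes for free once the finite-approximation construction is shown to depend continuously on the data, with no need for invariance of domain. Finally, your phrase ``glued in pairs along $-3\chi(\Sigma)$ geodesic edges'' is literally correct only when $\mu$ has finitely many leaves; for generic complete $\mu$ the ideal triangles are separated by uncountably many leaves of $\widetilde\mu$, which is precisely why the train-track approximation and the limiting argument are needed at all.
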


\begin{remark}
\label{remark:technical}
This theorem is stated in its original, general form when $\Sigma$ might have punctures. The hyperbolic structures considered in this case must be so that all punctures are given the shape of a cusp. This is the reason for the ``standardness" condition around the cusps in the statement. Since we stick to closed surfaces, we can ignore this condition here. The readers who would like to have more information about the general situation are referred to Thurston's paper \cite{Thurston86}.
 
In order to prove that the above map $F_{\mu}$ is surjective, Thurston constructs the inverse map, 
namely, he constructs starting from a class $F$ of measured foliations transverse to $\mu$ a hyperbolic structure 
$h$ such that $F_{\mu}^{-1}(F)=h$.
For technical reasons, this is more easily done by assuming that the surface $\Sigma$ is equipped with some base hyperbolic structure $h_{0}$ and by constructing from this base structure the new hyperbolic structure $h$ that fulfills the requirements (see \cite{Thurston86});
we shall also encounter in the sequel the need of fixing a base structure $h_{0}$.   
\end{remark}

We shall now explain in some detail why horocyclic foliations encode hyperbolic structures.

\subsection{Shear between ideal triangles of $\mu$ for a given hyperbolic structure}

Assume that the surface $\Sigma$ is equipped with a hyperbolic structure $h$ 
and a complete geodesic lamination $\mu$.
Each ideal triangle of $\Sigma\setminus\mu$ has a \textbf{centre}, 
namely, the centre of symmetry of the ideal triangle.
By symmetry, the three orthogonal projections of the centre on the ideal triangle's sides give the distinguished points. 

The shear between two ideal triangles measures how these two ideal triangles sit one with respect to the other for the hyperbolic structure $h$ on $\Sigma$. 
It is obtained by measuring the signed ``shift" between the centres of these ideal triangles. 
However, the shear between two ideal triangles of $\Sigma\setminus\mu$ does not make sense as it, but can be satisfactorily defined when one considers pairs of ideal triangles in the universal covering, that is, by considering the preimage $\widetilde{\mu}$ of $\mu$ in the universal covering $\widetilde{\Sigma}$ of $\Sigma$ and pairs of ideal triangles of $\widetilde{\Sigma}\setminus\widetilde{\mu}$.

The \emph{oriented} surface $\Sigma$ being equipped with a hyperbolic structure $h$, identify, using the structure $h$, 
the metric universal covering $\widetilde{\Sigma}$ with the hyperbolic plane, so that orientations match.
The preimage $\widetilde{\mu}$ of $\mu$ in $\widetilde{\Sigma}$ is then a complete geodesic lamination of $\widetilde{\Sigma}$ which is globally invariant under the usual action of the fundamental group of $\Sigma$.
Equip $\widetilde{\Sigma}$ with the preimage $\widetilde{F}_{\mu}(h)$ of the horocyclic foliation $F_{\mu}(h)$ associated
to the complete geodesic lamination $\mu$ and to the hyperbolic structure $h$.
We collapse the non-foliated regions of $\widetilde{F}_{\mu}(h)$ onto tripods in order to obtain a genuine measured foliation;
we denote it by $\widetilde{F}_{\mu}(h)$ again, even though it is well-defined up to isotopies supported in neighbourhoods of non-foliated regions.

Consider a pair $T_{1}$, $T_{2}$ of ideal triangles of $\widetilde{\Sigma}\setminus\widetilde{\mu}$.
Connect the centres of these ideal triangles by a curve $\delta_{1,2}$ which is quasi-transverse with respect to $\widetilde{F}_{\mu}(h)$, in the sense of \cite{FLP}.
This means that the curve $\delta_{1,2}$ is a concatenation of finitely many compact paths each connecting the centres of ideal triangles, 
each being transverse to the foliation $\widetilde{F}_{\mu}(h)$ and such that two consecutive paths do not lie, 
in the vicinity of the centre where they meet, in the same wedged region enclosed by the singular leaves of $\widetilde{F}_{\mu}(h)$ emanating from the centre.

The main remark to be made is that, since the curve $\delta_{1,2}$ is quasi-transverse, 
once it crosses an ideal triangle of $\widetilde{\Sigma}\setminus\widetilde{\mu}$, 
it does not come through this ideal triangle back again.

Replace every path composing $\delta_{1,2}$ and joining a consecutive pair of centres of ideal triangles by a curve obtained as follows.
The path $\delta$ of $\delta_{1,2}$ to be replaced connects the centre $c$ of an ideal triangle 
to the centre $c'$ of another ideal triangle and this path is transverse to the leaves of $\widetilde{F}_{\mu}(h)$.
By pushing the path $\delta$ along the leaves of $\widetilde{F}_{\mu}(h)$ onto a leaf of $\widetilde{\mu}$ 
which is crossed by $\delta$, 
we obtain a curve $\delta^{*}$ made out of a singular leaf of $\widetilde{F}_{\mu}(h)$ emanating from $c$, 
followed by a geodesic segment contained in a leaf of $\widetilde{\mu}$, 
and finally followed by a singular leaf of $\widetilde{F}_{\mu}(h)$ ending on $c'$.
We say that the curve $\delta^{*}$ is \textbf{horogeodesic}.
The intersection number of $\delta$ with respect to $\widetilde{F}_{\mu}(h)$ then equals the length of the geodesic 
segment in $\delta^{*}$.
By replacing every path of $\delta_{1,2}$ by a horogeodesic curve, 
we obtain a curve $\delta_{1,2}^{*}$ homotopic to $\delta_{1,2}$ relative to the centres it connects 
whose intersection number with respect to $\widetilde{F}_{\mu}(h)$ is equal to $i(\delta_{1,2},\widetilde{F}_{\mu}(h))$ 
and is obtained by adding the lengths of all geodesic segments that compose it. 
We say that the curve $\delta_{1,2}^{*}$ is horogeodesic as well.

Let us now define the shear $\sigma_{1,2}$ between the ideal triangles $T_{1}$ and $T_{2}$.
Connect the centres of $T_{1}$ and $T_{2}$ by a quasi-transverse curve $\delta_{1,2}$.
Replace the quasi-transverse curve by a horogeodesic one, $\delta_{1,2}^{*}$, as above.
The \textbf{shear} $\sigma_{1,2}$ between the ideal triangles $T_{1}$ and $T_{2}$ is the signed transverse measure 
with respect to $\widetilde{F}_{\mu}(h)$ of $\delta_{1,2}^{*}$.
More precisely, the real number $\sigma_{1,2}$ is the sum of the \emph{signed} lengths of the geodesic components of the 
horogeodesic curve $\delta_{1,2}^{*}$, where the sign convention is such that 
once the curve $\delta_{1,2}^{*}$ is oriented from $T_{1}$ to $T_{2}$,
a geodesic segment going to the left has a positive sign while it has a negative sign when it goes to the right.

It is easily checked that the sign assigned to every geodesic segment does actually not depend upon the orientation of the curve $\delta_{1,2}^{*}$ but only on the orientation of $\Sigma$.
We also check that the shear $\sigma_{1,2}$ is well-defined, that is, it does not depend on the chosen quasi-transverse curve.
Since we replace a given quasi-transverse curve by a horogeodesic one in the above definition,
it suffices to check that the shear $\sigma_{1,2}$ does not depend upon the chosen horogeodesic curve.
First note that the sequence of centres connected by a horogeodesic curve $\delta_{1,2}^{*}$ joining the centres of   
$T_{1}$ and $T_{2}$ is uniquely defined.
This is due to the fact that the ideal triangles that the curve $\delta_{1,2}^{*}$ crosses are nested, 
which means that any ideal triangle crossed by $\delta_{1,2}^{*}$ is contained in one of the three 
complementary components of the ideal triangle the curve $\delta_{1,2}^{*}$ has just crossed before.
Therefore, if a horogeodesic curve has to connect the centres of $T_{1}$ and $T_{2}$, 
it must cross a uniquely defined sequence of nested ideal triangles. 
The indeterminacy of the horogeodesic curve thus lies in the pushing process represented in Figure \ref{figshear3}.
It is clear that such a move does not change the signed transverse measure, hence preserves the shear $\sigma_{1,2}$.
This establishes that the shear $\sigma_{1,2}$ is well-defined.
Note that $\sigma_{1,2}=\sigma_{2,1}$, which explains why we talk about the shear \emph{between} $T_{1}$ and $T_{2}$ and not only \emph{from} $T_{1}$ \emph{to} $T_{2}$.

\begin{figure}
\centering
\includegraphics[width=.5\linewidth]{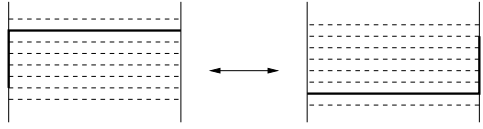}
\caption{Move to be repeated to pass from one horogeodesic curve to another: the horizontal dotted leaves are leaves of the horocyclic foliation whereas the two vertical curves are pieces of leaves of $\mu$. The bold segments represent pieces of some horogeodesic curve.}
\label{figshear3}
\end{figure}

We shall say that two ideal triangles $T_{1}$ and $T_{3}$ of the hyperbolic plane are \textbf{separated} by a third one $T_{2}$ if any geodesic segment going from one side of $T_{1}$ to another side of $T_{3}$ intersects $T_{2}$.

\begin{lemma}
\label{lemma:separation}
Let $T_{1}$ and $T_{3}$ be two ideal triangles separated by a third one $T_{2}$.
Then we have
$$
\sigma_{1,3}=\sigma_{1,2}+\sigma_{2,3}.
$$  
\end{lemma}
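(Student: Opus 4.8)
The strategy is to use the fact, just established, that $\sigma_{1,3}$ is independent of the quasi-transverse curve used to define it, and to choose that curve so that it passes through the centre $c_2$ of $T_2$. Concretely, I would pick a quasi-transverse curve $\delta_{1,2}$ joining the centre of $T_1$ to $c_2$ and a quasi-transverse curve $\delta_{2,3}$ joining $c_2$ to the centre of $T_3$, and take $\delta_{1,3}=\delta_{1,2}\cdot\delta_{2,3}$. One then replaces each of $\delta_{1,2}$ and $\delta_{2,3}$ by a horogeodesic curve $\delta_{1,2}^{*}$, $\delta_{2,3}^{*}$; the concatenation $\delta_{1,2}^{*}\cdot\delta_{2,3}^{*}$ is a horogeodesic curve homotopic rel endpoints to $\delta_{1,3}$, so it computes $\sigma_{1,3}$. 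Since $\sigma_{1,3}$ is, by definition, the signed transverse measure of this curve with respect to $\widetilde{F}_{\mu}(h)$, and since the sign attached to each geodesic sub-segment depends only on the orientation of $\Sigma$ (not on the orientation of the curve), this signed transverse measure is additive over the concatenation; it therefore equals the signed transverse measure of $\delta_{1,2}^{*}$ plus that of $\delta_{2,3}^{*}$, that is, $\sigma_{1,2}+\sigma_{2,3}$.

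The point at which the separation hypothesis is used, and the step I expect to be the main obstacle, is the verification that $\delta_{1,3}=\delta_{1,2}\cdot\delta_{2,3}$ can indeed be taken quasi-transverse, which amounts to checking quasi-transversality at $c_2$: the final sub-path of $\delta_{1,2}$ and the initial sub-path of $\delta_{2,3}$ must not lie in the same wedge of $\widetilde{F}_{\mu}(h)$ at $c_2$. Here one uses that, because $T_2$ separates $T_1$ and $T_3$, the triangles $T_1$ and $T_3$ lie in two \emph{distinct} complementary components of $T_2$, across two distinct sides $s$ and $s'$ of $T_2$. A quasi-transverse curve running from $T_1$ to $c_2$ must cross $s$ before reaching $c_2$ --- $s$ being the side of $T_2$ separating it from the component containing $T_1$ --- so near $c_2$ it lies in the wedge facing $s$; likewise $\delta_{2,3}$ leaves $c_2$ through the wedge facing $s'$; these wedges are distinct since $s\neq s'$.

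Equivalently, and perhaps more transparently, one can phrase the argument entirely in terms of horogeodesic curves: by the nestedness discussion preceding the lemma, a horogeodesic curve from the centre of $T_1$ to the centre of $T_3$ crosses a uniquely determined chain of ideal triangles, and the separation hypothesis forces $T_2$ to occur in this chain; splitting such a curve at $c_2$ then exhibits it as a concatenation of a horogeodesic curve computing $\sigma_{1,2}$ with one computing $\sigma_{2,3}$, and additivity of the signed transverse measure finishes the proof. Beyond this routing/quasi-transversality point, the argument is pure bookkeeping.
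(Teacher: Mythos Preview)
Your proof is correct and follows essentially the same approach as the paper: concatenate horogeodesic curves $\delta_{1,2}^{*}$ and $\delta_{2,3}^{*}$ at the centre of $T_{2}$, invoke the separation hypothesis to ensure the concatenation is quasi-transverse (hence a legitimate horogeodesic curve computing $\sigma_{1,3}$), and conclude by additivity of the signed transverse measure. If anything, you are more explicit than the paper about \emph{why} separation guarantees quasi-transversality at $c_{2}$, which is a welcome clarification.
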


\begin{proof}
One can connect the centre of the ideal triangle $T_{1}$ with the centre of the ideal triangle $T_{2}$ by
a horogeodesic curve $\delta_{1,2}$ and do the same with a horogeodesic curve $\delta_{2,3}$ 
connecting the centre of the ideal triangle $T_{2}$ with the centre of the ideal triangle $T_{3}$.
The concatenation of these two curves $\delta_{1,2}$ and $\delta_{2,3}$ we call $\delta_{1,3}$.
The separation assumption of the three ideal triangles ensures that 
the curve $\delta_{1,3}$ is quasi-transverse to $\widetilde{F}_{\mu}(h)$. 
It is furthermore horogeodesic.
The signed transverse measures of the curves $\delta_{1,2}$, $\delta_{2,3}$ and $\delta_{1,3}$ are the sum of the signed transverse measures of their geodesic segments.
This proves that $\sigma_{1,3}=\sigma_{1,2}+\sigma_{2,3}$, hence the lemma.
\end{proof}

A quick way of defining shears is, as it is done in \cite{Bonahon96}, by considering the strip enclosed by the facing sides of the two ideal triangles we want to define the shear of.
By {\bf strip} we mean the closed region of the hyperbolic plane enclosed by two parallel geodesics (the geodesics might coincide though).
When the two geodesics have no common endpoints, there is a unique geodesic segment joining them perpendicularly that we call the {\bf core} of the strip.
When the two geodesics have exactly one common endpoint, the strip is also called a {\bf wedge} and the common endpoint is referred to as the endpoint of the wedge.
\parpic{
\setlength\fboxrule{0.1pt}
\psfrag{s}{\tiny $\sigma_{1,2}$}
\psfrag{S}{\tiny $S_{1,2}$}
\psfrag{t1}{\tiny $T_{1}$}
\psfrag{t2}{\tiny $T_{2}$}
\fbox{\includegraphics[width=3.5cm, height=3.5cm]{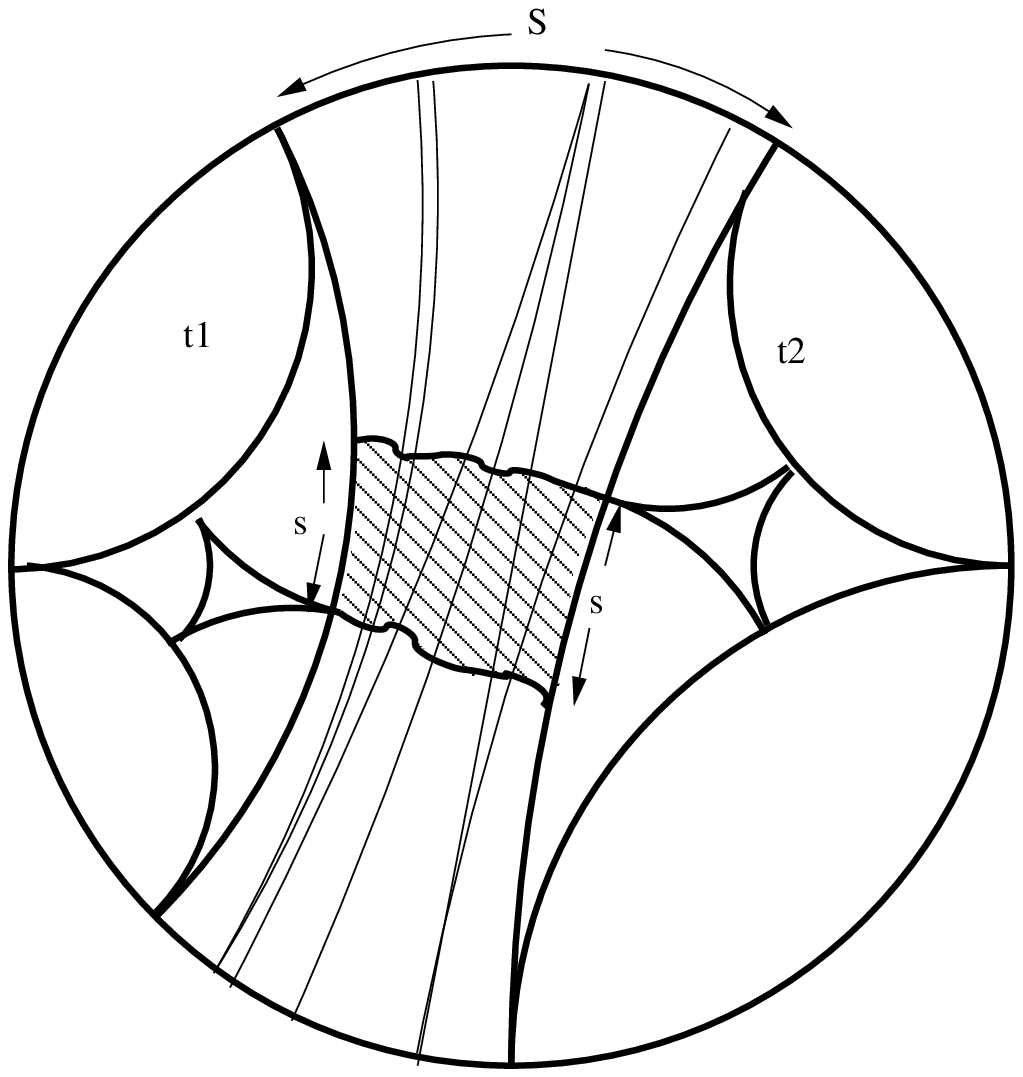}}
}
\noindent So consider the strip $S_{1,2}$ bounded by the facing sides $t_{1}$ and $t_{2}$ of two ideal triangles $T_{1}$, $T_{2}$.
Amongst the leaves of $\widetilde{\mu}$ that are contained in $S_{1,2}$, get rid of those that do not intersect the core. 
The remaining leaves cut the strip into countably many wedges and we say that we thus have a {\bf wedge cut} $\check\mu$ of the strip.
Foliate each wedge of the wedge cut with its horocyclic foliation, that is, the measured foliation whose leaves are contained in the horocycles centered at the endpoint of the wedges.
We get a measured foliation $F_{1,2}$ of the strip $S$ and, using this foliation, we project the distinguished point of the side $t_{1}\subset T_{1}$ of the strip on the other side $t_{2}\subset T_{2}$ of the strip. 
The shear $\sigma_{1,2}$ between $T_{1}$, $T_{2}$ is simply the signed distance between that projection and the distinguished point of $t_{2}$.
The reason why the two definitions we gave for shears are equivalent is based upon a joint use of Lemma \ref{lemma:separation} and Figure \ref{figshear3}.\\

\subsection{Shear between ideal triangles of $\mu$ with respect to a measured foliation transverse to $\mu$}

Thurston's map $h\mapsto F_{\mu}(h)$ characterizes hyperbolic structures through measure classes of foliations on a fixed (hyperbolic) surface $\Sigma$. It is therefore possible to describe shears between pairs of ideal triangles of $\Sigma\setminus\mu$ using a measured foliation transverse to the complete geodesic lamination $\mu$.
Let us thus end this section by defining shears between pairs of ideal triangles of $\Sigma\setminus\mu$ with respect
to a class of measured foliations transverse to $\mu$, that is, with respect to an element $F$ of $\mathcal{MF}(\mu)$.

Consider a complete geodesic lamination $\mu$ and an element $F$ of $\mathcal{MF}(\mu)$ (to fix the objects, it is better to think of the surface $\Sigma$ as equipped with a base hyperbolic structure $h_{0}$; see Remark \ref{remark:technical}).
\parpic{
\setlength\fboxrule{0.1pt}
\fbox{\includegraphics[width=3.5cm, height=3.5cm]{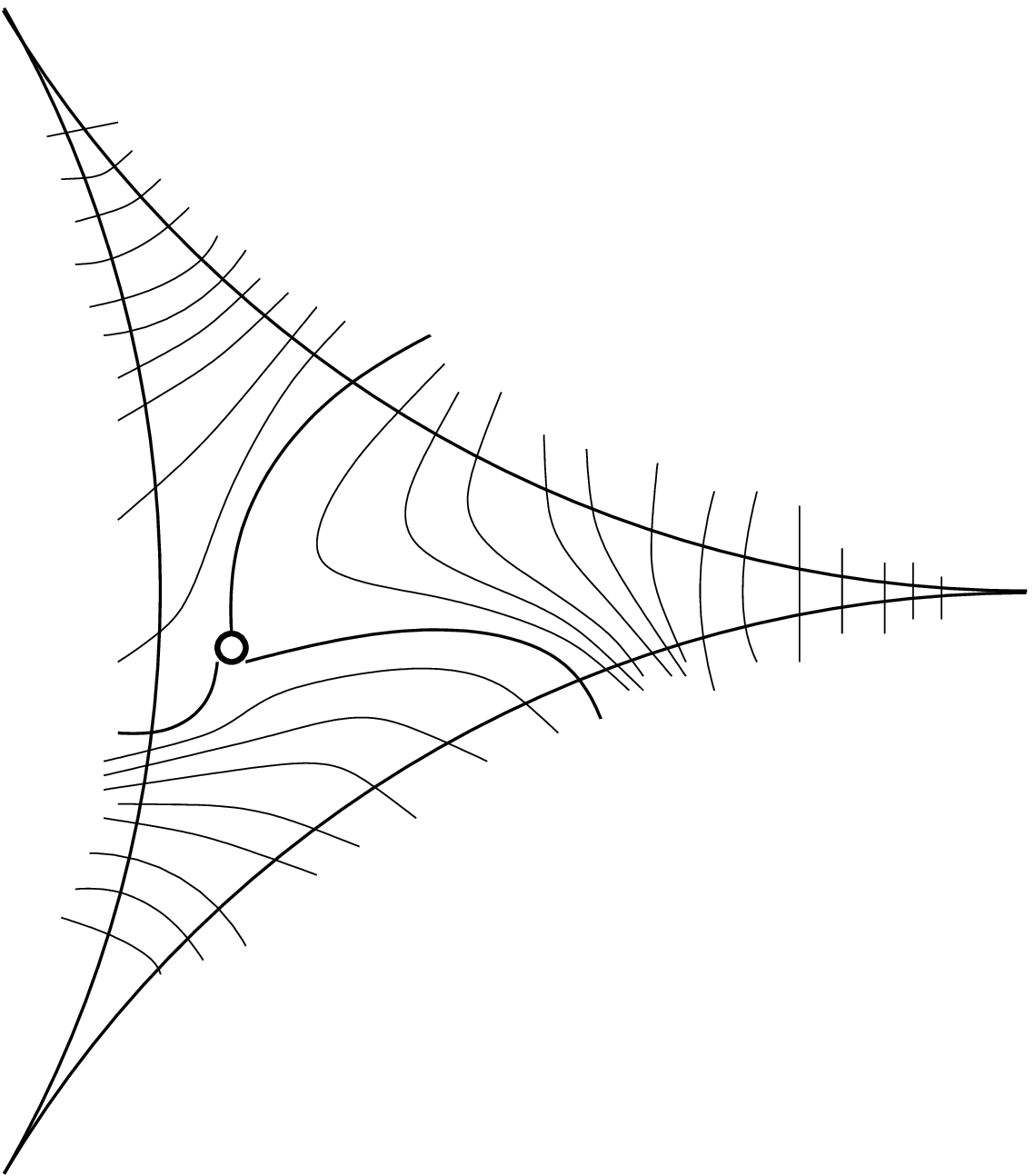}}
}Fix a representative of $F$ whose singular points all lie away from $\mu$. We still denote by $F$ this representative. Since $F$ is transverse to $\mu$ and $\mu$ is complete, the foliation $F$ has as many singular points as ideal triangles of $\Sigma\setminus\mu$. Furthermore, each one of these singular points are $3$-pronged, that is, there are three singular leaves emanating from each singularity and these three singular branches leave the ideal triangle containing the singularity through distinct sides. 

Using the base hyperbolic structure $h_{0}$, we lift the situation in the $h_{0}$-universal covering of $\Sigma$. 
Everything that has been previously said about shears with respect to $h$ and $F_{\mu}(h)$ carries over in the present situation: 
we replace the horocyclic foliation $F_{\mu}(h)$ by $F$, we consider ``horogeodesic curves" with respect to $\mu$ and $F$, i.e. paths which are concatenations of pieces contained in some leaves of $\widetilde{F}$ and of $\widetilde{\mu}$ alternately, and we use the transverse measure of $\widetilde{F}$ to define shear between a pair of ideal triangles in $\widetilde{\Sigma}$ as the signed transverse measure of a horogeodesic curve connecting the singular points of $\widetilde{F}$ inside the ideal triangles we want to determine the shear of.

The connection between both definitions of shears is made by Thurston's map, namely, denoting the hyperbolic structure $F_{\mu}^{-1}(F)$ by $h$, the horocyclic foliation $F_{\mu}(h)$ coincides with the equivalence class $F$.
A representative of $F$ can thus be chosen so that its preimage in the metric universal covering over $h$ coincides 
with the preimage of the horocyclic foliation $\widetilde{F}_{\mu}(h)$ 
(once the non-foliated regions have been collapsed onto tripods).
Evaluating then the shears between pairs of ideal triangles by using the metric universal covering over $h$ and the horocyclic foliation $\widetilde{F}_{\mu}(h)$ yields the shears evaluated through the measured foliation $\widetilde{F}$.\\

Given a complete geodesic lamination $\mu$ on the surface $\Sigma$ and given either a hyperbolic structure $h$ on $\Sigma$ (or, which is equivalent, a measure class $F$ of a foliation transverse to $\mu$), Thurston proved that the collection of shears between all the pairs of ideal triangles of $\widetilde{\Sigma}\setminus\widetilde{\mu}$ with respect to $h$ (or to $F$) characterizes the hyperbolic structure univoquely (see \cite{Thurston86} Proposition 4.1, where he uses slightly different objects he calls ``sharpness functions" which are intimately connected to shears). We shall also establish this fact, but we shall follow a route first leading to the notion of shear coordinates.

\section{Thurston's shear coordinates}
\label{section:shearcoordinates}

\subsection{Introduction}
\label{subsection:intro}

We describe in detail Thurston's shear coordinates on Teichm\"uller space which were introduced in \cite{Thurston86} (see also \cite{Pap91}).
Another description, in the same vein, of these coordinates can be found in \cite{Bonahon96}.

Since there are finitely many ideal triangles with disjoint interiors in a closed hyperbolic surface (of finite genus), it is intuitively clear that knowledge of shears between finitely many pairs of ideal triangles suffices for recovering all the other shears univoquely and eventually hyperbolic structure.  
One therefore needs a way of selecting finitely many shears on the surface, in large number enough so as to recover the other shears and the hyperbolic structure all together.
Building upon these remarks will first lead us to local shear coordinates and eventually to global shear coordinates over Teichm\"uller space.
But before entering this matter, we first quickly deal with the case where complete geodesic laminations have countably many leaves (finitely many, actually) for three reasons: 
firstly, this is the simplest situation, which directly leads to \emph{global} coordinates on Teichm\"uller space. 
Secondly, it provides good examples to see how shears work.  
Thirdly, this is the case that is mainly developed in literature (see \cite{PapThe08}, \cite{Fock07}, \cite{BBFS09}).

\subsection{Special case where $\mu$ has finitely many leaves}
\label{subsection:finite_lamination}

It turns out that a geodesic lamination which has countably many leaves necessarily has finitely many leaves.
So assume that $\mu$ has finitely many leaves.
It is then made up of finitely many simple closed geodesics along which isolated bi-infinite geodesics spiral.
In this case, except in a neighbourhood of the closed leaves, the ideal triangles are glued edge-to-edges, that is, one can talk about adjacent ideal triangles sharing a common side.
On such a common side lie two distinguished points, one for each adjacent ideal triangle.
The shear between them can thus be seized directly on the surface $\Sigma$ itself (without lifting to the universal cover) as the signed distance between the two distinguished points on the common side.

When the geodesic lamination $\mu$ has only isolated leaves (this requires the surface $\Sigma$ to have at least one puncture) these shears suffice for defining \emph{global} coordinates on Teichm\"uller space and for recovering all the other shears defined through the universal covering (see \cite{Fock07} for instance). 
These finitely many shears are actually slightly too numerous for recording all \emph{complete} hyperbolic structures of \emph{finite area} on the surface -- for a surface of genus $g$ with $p$ punctures, there are $6g-6+3p$ shears while Teichm\"uller space's dimension is $6g-6+2p$. However, one has to remember that a cusped shape is imposed to all punctures, yielding as many independent linear relations amongst the shears.

When the surface is closed, there is a closed leaf in $\mu$ necessarily.
There is no canonical way to choose a shear that would encode how shifted the ideal triangles are from one side of the closed leaf with respect to those from the other side.
One has to lift the situation to the universal covering once again and \emph{choose} a pair of ideal triangles separated by a lift of the closed leaf. 
This is what is done in \cite{BBFS09}.
Another way would have been to choose a homotopy class relative endpoints of a curve in the surface $\Sigma$ crossing the closed leaf and joining the centres of two ideal triangles. 
Anyway, the choice of one shear per closed curve component of $\mu$ together with canonical shears coming from adjacent ideal triangles yield \emph{global} coordinates on Teichm\"uller space and enable one to recover all other shears.

\subsection{General theory (with arbitrary $\mu$): Thurston's local shear coordinates}
\label{subsection:local_shear_general}

Let us give right away an overview of this general theory before entering into details.
We are aiming at parameterizing Teichm\"uller space by shear coodinates lying in a vector space of the same dimension as the dimension of Teichm\"uller space, namely $3\vert\chi(\Sigma)\vert$, where $\chi(\Sigma)$ is the Euler-Poincar\'e characteristic of the surface $\Sigma$.
To do this, me must find a way of selecting finitely many shears among those defined for each pair of ideal triangles of $\widetilde{\Sigma}\setminus\widetilde{\mu}$, where $\widetilde{\Sigma}$ is the universal covering of the surface $\Sigma$ and $\widetilde{\mu}$ is the preimage in $\widetilde{\Sigma}$ of a fixed complete geodesic lamination $\mu$.
We shall do this by using a thick train-track approximation $\Theta$ of $\mu$:
$\Theta$ is a finite simplification of the geodesic lamination $\mu$ and it enables to recover a situation where it is possible to talk about adjacent ideal triangles in the surface.
We shall thus get the finitely many shears we are looking for and we shall call them the distinguished shears.
They can be seen as a collection of real numbers, one for each branch of $\Theta$, satisfying some conditions, called switch conditions.
However, distinguished shears provide \emph{local} coordinates only, defined over an open subset $\mathcal{T}(\Theta)$ of Teichm\"uller space:
$$
\sigma_{\Theta}\, :\,  \mathcal{T}(\Theta)\to\mathbb{R}^{3\vert\chi(\Sigma)\vert}.
$$
It turns out that if $\Theta'$ is a finer train-track approximation of $\mu$, then $\mathcal{T}(\Theta)\subset\mathcal{T}(\Theta')$ and, as $\Theta$ converges to $\mu$, the subsets $\mathcal{T}(\Theta)$ converge to $\mathcal{T}(\Sigma)$.
In order to get \emph{global} shear coordinates, we transfer local shear coordinates $\mathcal{T}(\Theta)$ as $\Theta$ gets finer and finer to a \emph{fixed} train-track approximation $\Theta_{0}$ of $\mu$ in a coherent way.
Global shear coordinates are then a collection of real numbers, one for each branch of $\Theta_{0}$.

\subsubsection*{\bf \S 1. Foliations transverse to $\mu$ and foliations transverse to $\Theta$.} 
Let us fix a complete geodesic lamination $\mu$ of $\Sigma$.
We endow $\Sigma$ with a base hyperbolic structure $h_{0}$.
The complete lamination $\mu$ is straightened to a geodesic lamination with respect to the base structure $h_{0}$.
We fix a train-track approximation $\Theta=\Theta(\mu,h_{0})$ of $\mu$ as explained in Section \ref{subsection:thicktraintrack}.
For the sake of simplicity, we arrange $\Theta$ such that it is \textbf{generic}, that is, such that each singular traverse has exactly three branches abutting on. 
This possibly requires pushing inside the cusps of the train-track approximation slightly. 
Even though the boundary curves of the train-track approximation are then slightly deformed, this assumption of genericity has the only unimportant consequence that the train-track approximation is not exactly an $\epsilon$-neighbourhood of $\mu$ anymore. 

Let $F$ be an element of $\mathcal{MF}(\mu)$. 
This measure class $F$ has representatives which are transverse to $\mu$. 
Moreover, a representative, also denoted by $F$, can be chosen so that each of its leaves is transverse to the boundary $\partial\Theta$ of $\Theta$, meaning that each of its leaves does not bound with $\partial\Theta$ a disc inside $\Theta$, and so that its singularities lie outside $\Theta$, one in each component of $\Sigma\setminus\Theta$ (see, for a visual argument, the preceding picture). 
The restriction $F_{\vert\Theta}$ to $\Theta$ of this representative $F$ is called a \textbf{measured foliation} of $\Theta$. 
Two measured foliations of $\Theta$ are equivalent if they are isotopic through an isotopy which respects the transverse measures and keeps the ends of the leaves on $\partial\Theta$. 
The class $F$ thus induces an equivalence class of foliations of $\Theta$. 
We call such a class a measure class of foliations of $\Theta$.

Conversely, every measured foliation of $\Theta$ can be extended to a measured foliation of $\Sigma$ transverse to $\mu$ which is \emph{unique} up to isotopy supported outside $\Theta$. 
We can thus safely say that restriction induces a one-to-one correspondence between $\mathcal{MF}(\mu)$ and the measure classes of foliations of $\Theta$ and therefore we can use classes of foliations of a fixed train-track approximation $\Theta$ of $\mu$ to characterize hyperbolic structures on $\Sigma$ rather than using foliations transverse to $\mu$ defined over the whole surface $\Sigma$.

Let $F$ be a measured foliation of $\Theta$ (or, of $\Sigma$ which is transverse to $\mu$). 
It might be transverse or not transverse to $\Theta$ in the following sense. 
A measure class of foliations in $\Theta$ (or in $\mathcal{MF}(\mu)$) is {\bf transverse} to $\Theta$ if there exists a representative of $F$ which (or whose restriction to $\Theta$) coincides with the standard foliation of $\Theta$ by traverses; the transverse measures of both foliations do not necessarily match, though.
Another way of saying is that $F$ is transverse to $\Theta$ if the transverse measure of $F$ induces on every branches of $\Theta$ a \emph{positive} weight (see Figure \ref{figconvex9}). 
\begin{figure}[h!]
\centering
\psfrag{T}{\tiny $\Theta$}
\psfrag{T'}{\tiny $\Theta'$}
\psfrag{F}{\tiny $F$}
\includegraphics[width=.8\linewidth]{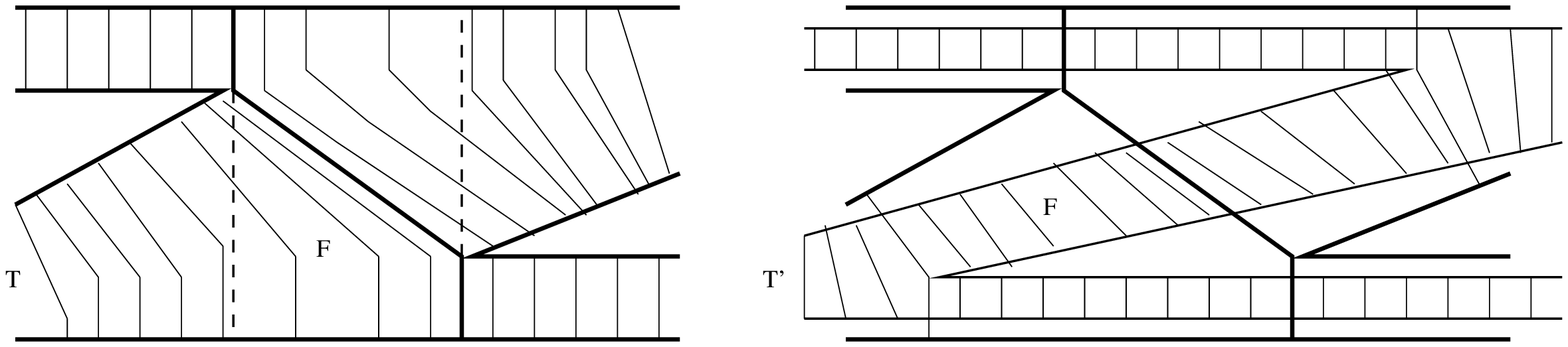}
\caption{(A schematic picture.) The foliation $F$ of $\Theta$ is \emph{not} transverse to $\Theta$, i.e., $F\notin\mathcal{MF}(\Theta)$ since it induces no weight on the central branch bordered by dotted lines. However, $F\in\mathcal{MF}(\Theta')$, where $\Theta'$ is a finer train-track approximation of $\mu$.}
\label{figconvex9}
\end{figure}

\subsubsection*{\bf \S 2. Localization to train-tracks.}
Let $\mathcal{MF}(\Theta)$ be the space of measure classes of foliations of $\Sigma$ that are transverse to $\Theta$.
We set $\mathcal{T}(\Theta):=F_{\mu}^{-1}(\mathcal{MF}(\Theta))$; this is an open subset of $\mathcal{T}(\Sigma)$.
We therefore have the following diagram:

\begin{center}
\begin{pspicture}(-1,-1)(4,3.4)
\begin{psmatrix}[nodesep=3pt]
$\mathcal{T}(\Theta)$ & $\mathcal{MF}(\Theta)$  \\
$\mathcal{T}(\Sigma)$ & $\mathcal{MF}(\mu)$ 
\ncline{->}{1,1}{1,2}^{$F_{\mu\vert\mathcal{T}(\Theta)}$}
\ncline{->}{1,2}{2,2}
\ncline{->}{1,1}{2,1}
\ncline{->}{2,1}{2,2}_{$F_{\mu}$}
\end{psmatrix}
\end{pspicture}
\end{center}

Consider a finer thick train-track approximation $\Theta'$ of $\mu$.
The train-track approximation $\Theta'$ is contained in $\Theta$ and the foliation of $\Theta$ by traverses, 
restricted to $\Theta'$, yields the foliation of $\Theta'$ by traverses.
Up to isotopy, the train-track approximation $\Theta'$ is obtained from $\Theta$ by unzipping along finitely many arcs,
each starting from a cusp of $\Theta$ and running transversely to the traverses of $\Theta$.
This unzipping process is combinatorially equivalent to saying that the train-track approximation $\Theta'$ is obtained from $\Theta$,
up to isotopy, by performing finitely many moves called \textbf{left} and \textbf{right splittings} along branches, as explained in \cite{Penner92} and as pictured on the top of Figure \ref{figshear5}. 

\begin{figure}
\centering
\psfrag{R}{$R$}
\psfrag{L}{$L$}
\psfrag{a}{$a$}
\psfrag{b}{$b$}
\psfrag{c}{$c$}
\psfrag{d}{$d$}
\psfrag{e}{$e$}
\psfrag{e'}{$e'$}
\includegraphics[width=.6\linewidth]{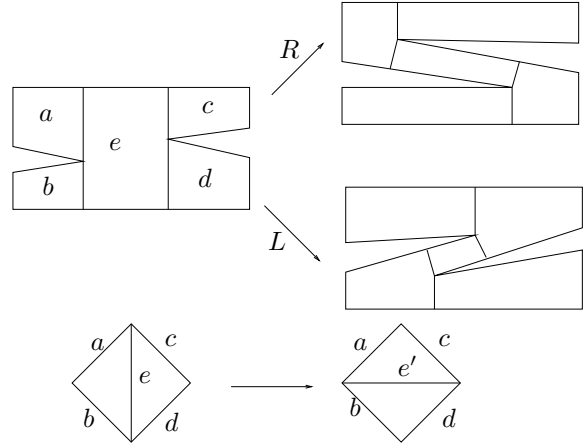}
\caption{The picture shows a right and a left splitting of a thick train-track $\Theta$. Translated into the graph $\Delta_{\Theta}$, these splittings become a flip of a diagonal.}
\label{figshear5}
\end{figure}

The open subset $\mathcal{T}(\Theta')$ contains the subset $\mathcal{T}(\Theta)$:
this is due to the fact that the train-track approximation $\Theta'$ is contained in $\Theta$ and that
a measured foliation which is transverse to $\Theta$ can therefore be arranged to be transverse to the finer train-track approximation $\Theta'$ (see Figure \ref{figconvex9} again). 
Thus $\mathcal{MF}(\Theta)$ is contained in $\mathcal{MF}(\Theta')$ and it follows,
by applying the map $F_{\mu}^{-1}$, that $\mathcal{T}(\Theta)$ is contained in $\mathcal{T}(\Theta')$, as claimed above.

If one lets the thick train-track approximations $\Theta$ converge towards $\mu$ in the sense of Hausdorff topology on $\Sigma$, 
one gets a sequence of wider and wider open subsets of $\mathcal{T}(\Sigma)$.
At the limit, the open subsets converge to the whole Teichm\"uller space.

\subsubsection*{\bf \S 3. Distinguished shears and signed transverse measures on $\Theta$.}
Let $F$ be a measured foliation transverse to $\Theta$.
The singular points of the foliation $F$ are trivalent and each of them is contained in one component of $\Sigma\setminus\Theta$.
Conversely, every component of $\Sigma\setminus\Theta$ contains exactly one such singular point.

Consider a branch $b$ of $\Theta$.
This branch is adjacent to two components of $\Sigma\setminus\Theta$, both having the shape of a triangle.
Note that these components might coincide in $\Sigma$.
Each of these triangle-shaped regions contains a (trivalent) singular point of the foliation $F$. 
Consider lifts of the ideal triangles of $\Sigma\setminus\mu$ which contain the two singular points such that these lifts are separated by a lift of the branch $b$ only.
The shear, $\sigma_{b}$, associated to the branch $b$ is then the shear between these lifts with respect to $F$, in the sense explained before. 
This shear is also the signed transverse measure with respect to $F$ of a horogeodesic curve joining the two singular points.

Let $B$ be the set of branches of $\Theta$.
We get from the above description a collection $\{\sigma_{b}\}_{b\in B}$ of \textbf{distinguished shears} which will serve as coordinates on the open patch $\mathcal{T}(\Theta)$ of Teichm\"uller space $\mathcal{T}(\Sigma)$. 

The shears $\{\sigma_{b}\}_{b\in B}$ satisfy the \textbf{switch condition} at each  singular traverse of $\Theta$, namely,
if one arranges $\Theta$ to be generic (i.e., three branches abut on every singular traverse), then, denoting by $b$ and $c$ the two incoming branches and by $a$ the outgoing one, we have $\sigma_{a}=\sigma_{b}+\sigma_{c}$ (see Figure \ref{figshear2}). 
\begin{figure}
\psfrag{se}{$\sigma_{a}$}
\psfrag{sf}{$\sigma_{b}$}
\psfrag{sg}{$\sigma_{c}$}
\psfrag{e}{$a$}
\psfrag{f}{$b$}
\psfrag{g}{$c$}
\centering
\includegraphics[width=.5\linewidth]{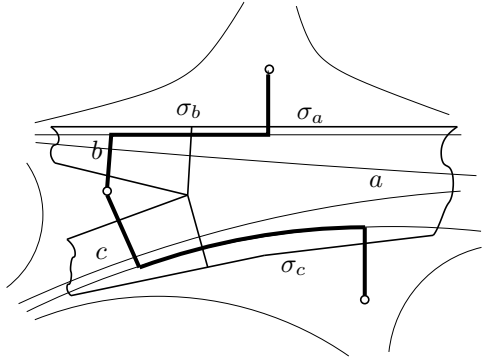}
\caption{The switch condition $\sigma_{a}=\sigma_{b}+\sigma_{c}$. 
Three centres of ideal triangles are represented by hollow dots. The shear between the uppermost and the lowermost ideal triangles is the number $\sigma_{a}$. It represents the shear associated to the outgoing branch $a$ (here, $\sigma_{a}>0$). The shear between the uppermost and the ideal triangle inbetween is the number $\sigma_{b}<0$ and the shear between the lowermost and the ideal triangle inbetween is the number $\sigma_{c}>0$. They represent the shears of the two incoming branches $b$ et $c$ respectively. We have represented horogeodesic curves joining various centres. The shears are the signed lengths of the geodesic parts of these horogeodesic curves. The two outermost ideal triangles are separated the other one. By Lemma \ref{lemma:separation}, we get $\sigma_{a}=\sigma_{b}+\sigma_{c}$.}
\label{figshear2}
\end{figure}
The shears $\{\sigma_{b}\}_{b\in B}$ can thus be viewed as a \emph{signed} transverse measure on $\Theta$, hence as a \emph{signed transverse measure} on $\mu$ itself (see \cite{Bonahon96}).

\subsubsection*{\bf \S 4. Distinguished shears and twisted cocycles.}

Let us now introduce the dual graph $\Delta_{\Theta}$ of the train-track approximation $\Theta$ of $\mu$ (or, maybe better to say, of any thin train-track $\theta$ obtained by collapsing $\Theta$ along its traverses).
There is exactly one pair of ideal triangles of $\Sigma\setminus\mu$ for each branch of the train-track $\Theta$.
The vertices of the graph $\Delta_{\Theta}$ can be taken to be the centres of the ideal triangles of $\mu$. 
The graph $\Delta_{\Theta}$ triangulates the surface $\Sigma$.
(Recall that the train-track approximation $\Theta$ has been arranged to be generic.)

Local shear coordinates can also be described as real weights on the branches of $\Delta_{\Theta}$, taken up to switch conditions.
The graph $\Delta_{\Theta}$ cannot be oriented in a coherent way such that the switch condition translates to as a cohomological condition (see Figure \ref{convex13}): but if we take the (singular two-fold) orientation covering of $\Sigma$ for which the preimage of $\Delta_{\Theta}$ is oriented accordingly to switch conditions, then shear coordinates can be seen as a cocycle with real coefficients up there.
Down to $\Sigma$, shear coordinates are then only to be seen as real cocycles twisted by local orientation, as it is also explained in \cite{Bonahon96}.

\begin{figure}
\psfrag{a}{$a$}
\psfrag{b}{$b$}
\psfrag{c}{$c$}
\psfrag{A}{$\sigma_{a}$}
\psfrag{B}{$\sigma_{b}$}
\psfrag{C}{$\sigma_{c}$}
\centering
\includegraphics[width=.3\linewidth]{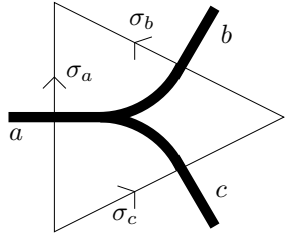}
\caption{Local picture of a train-track $\Theta$ about a trivalent switch (in thick lines) and of the associated triangle contained in the dual triangulation $\Delta_{\Theta}$. If we choose to orient the incoming branch $a$ in the direction of the switch and both outgoing branches $b$ and $c$ in the direction opposite to the switch, the edges of $\Delta_{\Theta}$ get a natural orientation (so that the directions of the branch of $\Theta$ and of the edge of $\Delta_{\Theta}$ associated to it form a positive basis for some fixed orientation on the surface). The switch condition $\sigma_{a}=\sigma_{b}+\sigma_{c}$ rephrases to as a local cohomological condition.}
\label{convex13}
\end{figure}

The number of vertices of $\Delta_{\Theta}$ is $2\vert\chi(\Sigma)\vert$, the number of edges is $\vert B\vert=9\vert\chi(\Sigma)\vert$ and the number of faces is $6\vert\chi(\Sigma)\vert$.
Each face of the triangulation corresponds bijectively with the singular traverses of $\Theta$. 
Hence there are as many switch conditions as faces of $\Delta_{\Theta}$.
A bit of linear algebra juggling easily establishes that these $6\vert\chi(\Sigma)\vert$ are independent. 
The dimension of the space of local shear coordinates is thus $3\vert\chi(\Sigma)\vert$, the dimension of the Teichm\"uller space of $\Sigma$.

\subsubsection*{\bf \S 5. Local shear coordinates}

Let a complete geodesic lamination $\mu$ of the surface $\Sigma$ (equiped with some base hyperbolic structure $h_{0}$) and a thick train-track approximation $\Theta$ of $\mu$ be given.
We have just seen that shears taken up to switch conditions generate a vector space $\textrm{T}(\Theta)$ of dimension $3\vert\chi(\Sigma)\vert$, where $\chi(\Sigma)$ denotes the Euler-Poincar\'e characteristic of $\Sigma$.
An open subset of this vector space $\textrm{T}(\Theta)$ serves as a domain for \emph{local} coordinates on $\mathcal{T}(\Sigma)$, parameterizing $\mathcal{T}(\Theta)$:
$$
\sigma_{\Theta}\,:\,\mathcal{T}(\Theta)\longrightarrow\textrm{T}(\Theta).
$$
These coordinates are called Thurston's local \textbf{shear coordinates} with respect to the train-track approximation $\Theta$ of the complete geodesic lamination $\mu$. (Thurston also called them cataclysm coordinates.)

The image of $\sigma_{\Theta}$ in $\textrm{T}(\Theta)$ is a cone. 
The open set $\mathcal{T}(\Theta)$ is the set of hyperbolic structures $h$ on $\Sigma$ whose horocyclic foliations (transfered to $\Sigma, h_{0}$) are transverse to $\Theta$. 
The coefficients of the vector $\sigma_{\Theta}(h)$ are the distinguished shears modulo switch relations between adjacent ideal triangles with respect to $\Theta$. 

As we shall see just below, the shear between any pair of ideal triangles for the structure $h$ is an integral linear combination of the distinguished shears $\sigma_{\Theta}(h)$. 
Furthermore, the collection of shears between pairs of ideal triangles characterizes in turn the hyperbolic structure $h$ univoquely (see Section \ref{section:developing}).

\subsubsection*{\bf \S 6. Shears are linear combinations of distinguished shears.}

We proceed on here with showing that distinguished shears $\{\sigma_{b}\}_{b\in B}$ given by a choice of some thick train-track approximation $\Theta$ of $\mu$ uniquely determine the shear between
any pair of ideal triangles of $\widetilde{\Sigma}\setminus\widetilde{\mu}$ 
(Recall that the surface is endowed with the base structure $h_{0}$).
More precisely we show, if $\sigma$ is the shear between a pair of ideal triangles $T$ and $T'$ of $\widetilde{\Sigma}\setminus\widetilde{\mu}$, that $\sigma$ can be expressed as a linear combination of distinguished shears $\{\sigma_{b}\}_{b\in B}$.

So let $\sigma$ be the shear between some pair of ideal triangles $T$ and $T'$ of $\widetilde{\Sigma}\setminus\widetilde{\mu}$.
Consider the preimage $\widetilde{\Delta}_{\Theta}$ of the graph $\Delta_{\Theta}$ in the universal covering $\widetilde{\Sigma}$.
Connect the centres of the ideal triangles $T$ and $T'$ with a 
horogeodesic curve $\gamma$.
If necessary, deform each component of $\gamma\setminus\Theta$ so that it passes through the centre of the triangle of $\widetilde{\Sigma}\setminus\Theta$ that contains it:
this results in a new horogeodesic curve we still denote by $\gamma$ passing through the centres $t=t_{1},t_{2},\ldots,t_{k}=t'$ of ideal triangles in its way from $t$ to $t'$.

For $i=1,\ldots,k-1$, let $\sigma_{i,i+1}$ be the shears between the ideal triangles with centres $t_{i},t_{i+1}$.
Since $\gamma$ is horogeodesic, we have by Lemma \ref{lemma:separation}
$$
\sigma=\sum_{i=1}^{k-1}\sigma_{i,i+1}.
$$ 
Two cases then occur: 
\begin{itemize}
\item either $t_{i},t_{i+1}$ are the centres of two adjacent ideal triangles with respect to $\Theta$, which means that the two components of $\widetilde{\Sigma}\setminus\Theta$ that contain these centres are adjacent to one branch $b_{i}$ of $\Theta$;
\item or $t_{i},t_{i+1}$ are the centres of two non-adjacent ideal triangles with respect to $\Theta$.
\end{itemize}
In the first case, the shear $\sigma_{i,i+1}$ equals by definition the distinguished shear $\sigma_{b_{i}}$ associated to the branch $b_{i}$ of $\Theta$. 
It therefore remains to focus on the second case. 
We can assume without loss of generality that our horogeodesic curve $\gamma$ joins the centres $t,t'$ of two non-adjacent ideal triangles without passing through other centres.

The curve $\gamma$ passes through a sequence of branches $b_{1},\ldots,b_{n}$ (in this order) of $\widetilde{\Theta}$ in its way from $t$ to $t'$.
Let $\delta_{1},\ldots,\delta_{n}$ be the edges of $\widetilde{\Delta}_{\Theta}$ associated to $b_{1},\ldots,b_{n}$ respectively.
The union $\cup_{1\leq j\leq n}\delta_{j}$ forms a connected curve in $\widetilde{\Delta}_{\Theta}$;
let $\delta$ be a continuous path 
$$
\delta\,:\,[0;1]\to\cup_{1\leq j\leq n}\delta_{j}
$$
whose image is this union of edges $\cup_{j}\delta_{j}$ such that $\delta(0)=t$, $\delta(1)=t'$, $\delta$ is injective in the interior of each edge $\delta_{j}$ and crosses at most twice the same edge.
Let $(\sigma_{k})_{1\leq k\leq m}$ ($m\geq n$) be the sequence of shears associated to $\delta$, ordered according to the orientation of the path.
Here $m\geq n$ since $\delta$ can pass successively twice through the same branch of $\widetilde{\Theta}$ (see the picture below).

\begin{lemma}
With the above notation, we have
$$
\sigma=\sum_{k=1}^{m}(-1)^{k+1}\sigma_{k}.
$$
\end{lemma}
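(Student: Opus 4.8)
The plan is to prove the identity by induction on $m$, the number of edges of the path $\delta$, using only two facts already available: the additivity of shears under separation (Lemma \ref{lemma:separation}) and the switch conditions at the singular traverses of $\Theta$ (Figure \ref{figshear2}).

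Notation first. Along $\delta$ one meets the centres $t=u_{0},u_{1},\dots,u_{m}=t'$, the $k$-th edge $[u_{k-1},u_{k}]$ being the edge of $\widetilde{\Delta}_{\Theta}$ dual to the branch carrying the distinguished shear $\sigma_{k}$; write $U_{k}$ for the ideal triangle of $\widetilde{\Sigma}\setminus\widetilde{\mu}$ centred at $u_{k}$. Since each edge of $\widetilde{\Delta}_{\Theta}$ is dual to a branch of $\widetilde{\Theta}$, the triangles $U_{k-1}$ and $U_{k}$ are adjacent with respect to $\Theta$ and $\sigma_{k}$ is precisely the shear between them. The case $m=1$ is then just the definition of the distinguished shear of the branch dual to $\delta$: here $t$ and $t'$ are the centres of the two components of $\widetilde{\Sigma}\setminus\widetilde{\Theta}$ abutting this branch, so $\sigma=\sigma_{1}$, as wanted.

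For the inductive step, put $U=U_{m-1}$ and delete the last edge of $\delta$, leaving a path $\delta'$ from $t$ to $U$. After re-normalising (truncating the horogeodesic curve $\gamma$ realising $\sigma$ near its endpoint and re-running the reduction that precedes the statement) one checks that $\delta'$ is again a path of the admitted type, so the induction hypothesis gives $\sigma(T,U)=\sum_{k=1}^{m-1}(-1)^{k+1}\sigma_{k}$, while $\sigma(U,T')=\sigma_{m}$ because $U$ and $T'$ are $\Theta$-adjacent. These are spliced by Lemma \ref{lemma:separation}, and the signs come out of the way $\gamma$ behaves near the intermediate centres. The crucial point is that $\gamma$ does not run \emph{through} the centre $u_{m-1}$ — it merely grazes the complementary triangle of $\Sigma\setminus\Theta$ based there, turning around one of its cusps — so $U_{m-1}$ is never placed symmetrically between $T$ and $T'$; tracking, one singular traverse at a time, how $\gamma$ turns around the successive intermediate triangles $U_{1},\dots,U_{m-1}$ (the local pictures being those of Figures \ref{figshear2} and \ref{figshear3}) shows that the relevant separations alternate along $\delta$, in such a way that $T'$ separates $T$ from $U_{m-1}$ when $m$ is even and $U_{m-1}$ separates $T$ from $T'$ when $m$ is odd. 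In the even case Lemma \ref{lemma:separation} gives $\sigma(T,U)=\sigma(T,T')+\sigma(T',U)$, hence $\sigma=\sigma(T,U)-\sigma_{m}=\sum_{k=1}^{m-1}(-1)^{k+1}\sigma_{k}-\sigma_{m}=\sum_{k=1}^{m}(-1)^{k+1}\sigma_{k}$; in the odd case it gives $\sigma=\sigma(T,U)+\sigma_{m}$, which is again $\sum_{k=1}^{m}(-1)^{k+1}\sigma_{k}$. The clause in the statement allowing $\delta$ to retrace an edge causes no trouble: the induction is on the edge-length $m$, which drops by one at each step regardless of such repetitions.

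The one genuinely delicate point is this geometric step — showing the separations alternate as $\delta$ is traversed, which is exactly the reason the coefficients are $(-1)^{k+1}$ rather than all $+1$; it amounts to a careful reading of how a horogeodesic curve winds past the cusps of the complementary gaps of $\Theta$, and is the place where the switch conditions do their real work. Everything else is a mechanical unwinding of Lemma \ref{lemma:separation}.
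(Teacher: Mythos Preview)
Your inductive scheme has a genuine gap at exactly the place you flag as ``delicate.'' The separation dichotomy you assert --- that $T'$ separates $T$ from $U_{m-1}$ when $m$ is even and $U_{m-1}$ separates $T$ from $T'$ when $m$ is odd --- is not established, and in fact does not hold as stated. Consider the last branch $b$ crossed by $\gamma$ (dual to the edge $[u_{m-1},u_m]$). By construction $T'=U_m$ lies on the far side of $b$ from $\gamma$'s approach, while $U_{m-1}$, being the \emph{other} region adjacent to $b$, lies on the near side --- the same side as $T$. So $T'$ cannot separate $T$ from $U_{m-1}$: they sit together on one side of $T'$'s facing edge. What actually alternates with $k$ is not a separation relation among the triangles $U_k$, but the side of $\gamma$ on which the \emph{edge} $\delta_k$ lies after crossing; this is a different (and weaker) statement that does not feed Lemma~\ref{lemma:separation} in the way your induction needs.

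There is a second, related problem: the truncated path $\delta'$ is not of the type the lemma governs. The lemma's hypotheses are tied to a horogeodesic curve joining the two centres in question, and the branches that \emph{that} curve crosses. A horogeodesic curve from $t$ to $u_{m-1}$ will in general cross a different collection of branches than $\gamma$ does, so $\delta'$ need not be the associated path for the pair $(T,U_{m-1})$; you cannot simply ``truncate $\gamma$ near its endpoint,'' since $u_{m-1}$ is not on $\gamma$.

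The paper's argument avoids both issues by working directly rather than inductively. It deforms $\gamma$ within the strip so that its geodesic part is a single segment $[uv]$ lying on one leaf of $\widetilde{\mu}$, then records the intersection points $x_0,\dots,x_{m+1}$ of $\delta$ with $\gamma$. The signed transverse measure of each sub-arc $x_kx_{k+1}$ of $\delta$ equals the signed length of the corresponding piece of $[uv]$, so $\sigma=\sum_k\tau_k$. Finally, since $\delta$ crosses $\gamma$ transversely at every branch and therefore alternates sides of $\gamma$, consecutive edges $\delta_k,\delta_{k+1}$ traverse almost the same wedges in opposite senses, forcing $\epsilon_k=(-1)^{k+1}$. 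No separation trichotomy among the $U_k$ is invoked.
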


\begin{proof}
Consider the strip $S$ in the universal covering bounded by the ideal triangles $T$ and $T'$ with centres $t,t'$ respectively.
Keep the leaves of $\widetilde{\mu}$ that separate $T$ and $T'$ only and foliate the strip $S$ with its horocyclic foliation $F$.
The horogeodesic curve $\gamma$ chosen to join $t$ to $t'$ can be deformed within the strip $S$ into a horogeodesic curve still denoted by $\gamma$ so that $\gamma$ is the concatenation of three curves: firstly a curve starting from $t$ and contained in a leaf of $F$; secondly, a geodesic segment we denote by $[uv]$, entirely contained in one leaf of $\widetilde{\mu}$; thirdly, a curve contained in a leaf of $F$ and ending at $t'$.
Note that $\gamma$ crosses all edges $\delta_{1},\ldots,\delta_{n}$ transversely.

The shear $\sigma$ between $t$ and $t'$ is the measure of $\gamma$ with respect to the signed transverse measure on $\widetilde{\Theta}$ (or equivalently, the signed length of the geodesic part $[uv]$ of $\gamma$).
Let $x_{0},\ldots,x_{m+1}$ be the intersection points between $\delta$ and $\gamma$, taken in order (hence $x_{0}=t$ and $x_{m+1}=t'$. We might have $x_{k}=x_{k+1}$ whenever the corresponding edge of $\Delta_{\Theta}$ is traversed twice).

On the one hand, for all $k\in\{1,\ldots,m-1\}$, the signed transverse measure $\tau_{k}$ of the subcurve $x_{k}x_{k+1}$ in the image of $\delta$ is equal to the signed length of the geodesic segment $[x_{k}x_{k+1}]$ which is contained in $\gamma$.
For $k=0$ and $k=m$, the signed transverse measures $\tau_{0}$ and $\tau_{m}$ of the subcurve $x_{0}x_{1}$ and $x_{m}x_{m+1}$ respectively are equal to the signed length of the geodesic segments $[ux_{1}]$ and $[x_{m}v]$ which are contained in $\gamma$ as well.
Since the signed transverse measure of $\gamma$ is the signed length of $[uv]=[ux_{1}]\cup[x_{1}x_{2}]\cup\ldots\cup[x_{m-1}x_{m}]\cup[x_{m}v]$, we have
$$
\sigma=\sum_{k=0}^{m}\tau_{k}.
$$

On the other hand, the image of $\delta$ is equal to $\cup_{k=0}^{m}x_{k}x_{k+1}$, so the sum $\sum_{k=0}^{m}\tau_{k}$ is the signed transverse measure of the image curve of $\delta$ computed with the weights on $\widetilde{\Theta}$.
Now since the transverse measure of each edge $\delta_{j}$ ($1\leq j\leq n$) equals $\sigma_{k}$ (for some $k\in\{0,\ldots,m\}$),  we get
$$
\sigma=\sum_{k=0}^{m}\tau_{k}=\sum_{k=0}^{m}\epsilon_{k}\sigma_{k},
$$
where $\epsilon_{k}\in\{-1;+1\}$.

Notice that the path $\delta$ crosses $\gamma$ transversely in every branch of $\widetilde{\Theta}$ and so alternates from one side of $\gamma$ to the other.
Therefore, almost all wedges of $\widetilde{\mu}$ crossed by $\delta_{j}$ are crossed again by $\delta_{j+1}$.
Lemma \ref{lemma:separation} implies that the total shear along the path $\delta_{j}\cup\delta_{j+1}$ is not the sum but the difference of $\sigma_{j}$ and $\sigma_{j+1}$.
Hence
$$
\sigma=\sum_{k=1}^{m}(-1)^{k+1}\sigma_{k}.
$$
This concludes the proof of the lemma.
\end{proof}

The lemma above settles the claim that shears between pairs of ideal triangles are integral linear combinations of distinguished shears.
Moreover, the above formula is well-defined because it does not depend upon the choice of the horogeodesic curve $\gamma$:
to see this, recall that $\gamma$, being horogeodesic, crosses a uniquely defined sequence of ideal triangles without turning back.
Therefore, the sequence of vertices of the graph $\Delta_{\Theta}$ associated to $\gamma$ in the proof above is univoquely determined.

\subsection{Thurston's global shear coordinates}
\label{subsection:global_shear}

We briefly explain in this section how Thurston gets global shear coordinates on Teichm\"uller space.

Let $\mu$ be a complete geodesic lamination of the surface $\Sigma$.
For a given thick train-track approximation $\Theta$ of $\mu$, we have shear coordinates $\sigma_{\Theta}\,:\,\mathcal{T}(\Theta)\longrightarrow\textrm{T}(\Theta)$ defined on the open subset $\mathcal{T}(\Theta)$ of $\mathcal{T}(\Sigma)$.

Let $\Theta'$ be a finer train-track approximation of $\mu$.
We have also local shear coordinates $\sigma_{\Theta'}\,:\,\mathcal{T}(\Theta')\longrightarrow\textrm{T}(\Theta')$ defined on the open subset $\mathcal{T}(\Theta')$ of $\mathcal{T}(\Sigma)$.
Since $\Theta'$ is finer than $\Theta$, we have $\mathcal{T}(\Theta')\subset\mathcal{T}(\Theta')$.

There is a natural bijective linear map from $\textrm{T}(\Theta')$ to $\textrm{T}(\Theta)$ induced by shear coordinates defined on $\mathcal{T}(\Theta')$ and on $\mathcal{T}(\Theta)$.
One way of obtaining this bijective linear map is first by choosing a finite sequence of train-track approximations $\Theta=\Theta_{0},\Theta_{1},\cdots,\Theta_{m}=\Theta'$
such that $\Theta_{j+1}$ is obtained from $\Theta_{j}$ by one (left or right) splitting. 
We notice that the splitting along a branch $e$ of $\Theta_{j}$ amounts to flipping the edge $\delta_{e}$ of the graph $\Delta_{\Theta_{j}}$ that crosses $e$, see Figure \ref{figshear5}.
The linear map sending shears defined on $\Theta_{j}$ to those defined on $\Theta_{j+1}$ can then be easily written down: it is the identity except for the central branch $e'$ where we have, using notation of Figure \ref{figshear5}, 
$\sigma_{e'}=\pm(\sigma_{b}-\sigma_{a})$, the minus sign being for right splitting. 
By passing to quotient linear subspaces, that is, by taking shears up to switch relations, 
we get an injective linear map from $\textrm{T}(\Theta_{j})$ to $\textrm{T}(\Theta_{j+1})$.
The dimensions of the linear spaces being equal, 
the above injective map is bijective and we consider it as defined over the whole vector space $\textrm{T}(\Theta_{j})$.

We denote by $p_{j,j+1}\ :\ \textrm{T}(\Theta_{j+1})\longrightarrow\textrm{T}(\Theta_{j})$ the \emph{inverse} of the above bijective linear map and we call it the \textbf{transport map} from $\Theta_{j+1}$ to $\Theta_{j}$.
The transport map is used to transport genuine shear coordinates defined on $\Theta_{j+1}$ to well-defined numbers on branches of $\Theta_{j}$ satisfying switch relations.
The transport map $p_{\Theta,\Theta'}\ :\ \textrm{T}(\Theta')\longrightarrow\textrm{T}(\Theta)$ from $\Theta'=\Theta_{m}$ to $\Theta=\Theta_{0}$ is defined as the composition of the transport maps,
namely, $p_{\Theta,\Theta'}=p_{0,1}\circ\cdots\circ p_{m-1,m}$.

We thus have the following diagram:
\begin{center}
\begin{pspicture}(-1,-1)(4,3.3)
\begin{psmatrix}[nodesep=3pt]
$\mathcal{T}(\Theta')$ & $\textrm{T}(\Theta')$  \\
$\mathcal{T}(\Theta)$ & $\textrm{T}(\Theta')$ 
\ncline{->}{1,1}{1,2}^{$\sigma_{\Theta'}$}
\ncline{->}{1,2}{2,2} \Aput{$p_{\Theta,\Theta'}$}
\ncline{<-}{1,1}{2,1}
\ncline{->}{2,1}{2,2}_{$\sigma_{\Theta}$}
\end{psmatrix}
\end{pspicture}
\end{center}

We have to check that the transport map $p_{\Theta,\Theta'}$ does not depend upon the chosen sequence of train-track approximations $\Theta=\Theta_{0}\supset\Theta_{1}\supset\cdots\supset\Theta_{m}=\Theta'$.
Consider two transport maps $p\ :\ \textrm{T}(\Theta')\longrightarrow\textrm{T}(\Theta)$ and $q\ :\ \textrm{T}(\Theta')\longrightarrow\textrm{T}(\Theta)$ from $\Theta'$ to $\Theta$.
It suffices to show that $q\circ p^{-1}$ is the identity over $\textrm{T}(\Theta)$.
Consider a point $h\in\mathcal{T}(\Theta)$ and the associated shear coordinates as a point $M_{h}$ in $\textrm{T}(\Theta)$.
The image of $M_{h}$ by $p^{-1}$ gives genuine shear coordinates on $\Theta'$.
The image of $p^{-1}(M_{h})$ by $q$ is the collection of shears, 
among those between the ideal triangles with respect to $h$, 
that are associated to the branches of $\Theta$, 
so we obviously get the shear coordinates we started with back.
The map $q\circ p^{-1}$ is therefore the identity over the subset of $\textrm{T}(\Theta)$ parameterizing $\mathcal{T}(\Theta)$.
Since this subset is open, the linear map $q\circ p^{-1}$ is the identity over the whole linear space $\textrm{T}(\Theta)$.
We thus have shown that the local shear coordinates can be univoquely transported, 
via transport maps, to coordinates defined on a fixed train-track $\Theta$.

If one lets the thick train-track approximations $\Theta$ converge towards $\mu$ in the sense of Hausdorff topology on $\Sigma$, 
one gets a sequence of shear coordinates,
each defined on wider and wider open subsets of $\mathcal{T}(\Sigma)$, together with transport maps relating shear coordinates on wider subsets with shear coordinates defined on smaller subsets.
At the limit, the open subsets converge to the whole Teichm\"uller space.
Transport maps give a way of transporting any shear coordinates to a fixed train-track approximation $\Theta$.
This enables us to define global coordinates on $\mathcal{T}(\Sigma)$ called (global) shear coordinates as well.
They give a parameterization of $\mathcal{T}(\Sigma)$ by a vector space of dimension $3\vert\chi(\Sigma)\vert$,
since switch relations are satisfied 
(we assume here the surjectivity of the limit coordinate map).
We denote this global parameterization by $\widetilde{\sigma}_{\Theta}$ from $\mathcal{T}(\Sigma)$ to $\mathbb{R}^{3\vert\chi(\Sigma)\vert}$.
With above notation, the sequence of local coordinates $\sigma_{\Theta_{n}}\,:\,\mathcal{T}(\Theta_{n})\to\textrm{T}(\Theta_{n})$ exhausts $\mathcal{T}(\Sigma)$, that is, for each point $h\in$ $\mathcal{T}(\Sigma)$, there exist $N$ such that, for all $n\geq N$, $h\in\mathcal{T}_{\Theta_{n}}$ and
$\widetilde{\sigma}_{\Theta}(h)=p_{n}\circ\sigma_{\Theta_{n}}(h)$.

\section{Shears determine hyperbolic structures}
\label{section:developing}

Let $\mu$ be a complete geodesic lamination of the surface $\Sigma$ and let $h_{0}$ be a base hyperbolic structure on $\Sigma$.
Fix a thick train-track approximation $\Theta$ of $\mu$ and let $\Delta_{\Theta}$ be the graph associated to $\Theta$ as defined above.
We saw in Section \ref{subsection:local_shear_general} that shears between pairs of ideal triangles can be expressed as linear combinations of distinguished shears viewed as weights on $\Delta_{\Theta}$.
In this section, we quickly recall how the collection of shears between pairs of ideal triangles characterize uniquely hyperbolic structures $h$ on $\Sigma$.

The idea is to build the developing map $D$ from the universal cover $\widetilde{\Sigma}$, identified with the set of homotopy classes relative endpoints of paths all starting from the same basepoint, and the hyperbolic plane. 
The holonomy map $H$ from $\pi_{1}(\Sigma)$ to the group of isometries of the hyperbolic plane has also to be built.
Since $\Delta_{\Theta}$ cuts the surface $\Sigma$ into triangles, we have an isomorphism between $\pi_{1}(\Sigma)$ and $\pi_{1}(\Delta_{\Theta})$.
The developing and the holonomy maps hence have only to be built along each edge of the graph $\Delta_{\Theta}$, that is, for a path passing through a branch of $\Theta$.
If we look at the situation in the universal cover identified, through the base hyperbolic structure $h_{0}$, with the hyperbolic plane, this amounts to considering a strip $S^0$ cut into ideal triangles by a geodesic lamination $\mu$ (with empty interior).

So consider a base strip $S^0$ of the hyperbolic plane and a geodesic lamination $\mu$ of $S^0$ with empty interior such that $S^0\setminus\mu$ consists in interiors of ideal triangles.
It is clear that shears between pairs of ideal triangles that do not both cross the core $\alpha_{core}$ of $S^0$ contribute nothing in the hyperbolic structure of $S^0$.
We can therefore forget about leaves of $\mu$ that do not cross $\alpha_{core}$ and keep up with the wedge cut $\check{\mu}$ induced by $\mu$, made up of the leaves of $\mu$ that do cross the core $\alpha_{core}$.
Let $\mathcal{W}$ denote the set of wedges of $S^0\setminus\check{\mu}$.
The set $\mathcal{W}$ is countable.

We fix an orientation of the core $\alpha_{core}$ of $S^0$.
By using this orientation, we index $\mathcal{W}$ with a bijection $I\to\mathcal{W}$, $i\mapsto w_i$, where the set of indexes $I$ is countable and totally ordered.
We also orient the leaves of the wedge cut $\check{\mu}$ positively toward the left of $\alpha_{core}$.

Note that a wedge has two distinguished points, one per side.
The shear between two wedges of $\check{\mu}$ is by definition the shear between the corresponding ideal triangles of $\mu$.
The set of unordered pairs of distinct wedges is denoted by $\underline{\mathcal{W}}^2=(\mathcal{W}\times\mathcal{W}-\Delta)/\mathbb{Z}_{2}$, where $\Delta$ is the diagonal of $\mathcal{W}\times\mathcal{W}$ and $\mathbb{Z}_{2}$ acts by permutation.
A collection of shears for pairs of wedges of $\check{\mu}$ is represented by a point in $\mathbb{R}^{\underline{\mathcal{W}}^2}$, that is, a function defined on unordered pairs of distinct wedges with values in $\mathbb{R}$.
The latter linear space has possibly infinite dimension.

Let $\mathcal{T}_{\check{\mu}}(S^0)$ be the Teichm\"uller space of $S^0$ relative to $\check{\mu}$, that is, 
$$
\mathcal{T}_{\check{\mu}}(S^0):=\left\{(S',\check{\mu}',f')\,\vert\,f'\,:S^0\to S'\ \textrm{orientation-preserving homeomorphism},\ f'(\check{\mu})\simeq\check{\mu}'\right\}/\sim
$$
where $(S',\check{\mu}',f')\sim(S'',\check{\mu}'',f'')$ if there exists an isometry $g$ of $\mathbb{H}^2$ with $f''\circ f'^{-1}\simeq g$.
The need of considering relative Teichm\"uller space stems from the fact that there are infinitely many different ways of gluing a collection of wedges and obtaining isometric strips. 
In $\mathcal{T}_{\check{\mu}}(S^0)$, these isometric strips obtained by gluing are distinct and $\check{\mu}$ acts as a marking in the classical theory.

There is thus a map $\sigma_{\check{\mu}}\,:\,\mathcal{T}_{\check{\mu}}(S^0)\to\mathbb{R}^{\underline{\mathcal{W}}^2}$.
It is intuitively clear that this map is not surjective for, in the vicinity of the origin of $\mathbb{R}^{\underline{\mathcal{W}}^2}$ for instance, there are shears that would give strips with core curves of infinite length.
Thurston proved that $\sigma_{\check{\mu}}$ is a homeomorphism onto its image which is an open subset of $\mathbb{R}^{\underline{\mathcal{W}}^2}$.
He gave the inverse map $\sigma_{\check{\mu}}^{-1}\,:\,\mathbb{T}_{\check{\mu}}(S^0)\to\mathcal{T}_{\check{\mu}}(S^0)$, where $\mathbb{T}_{\check{\mu}}(S^0)=\sigma_{\check{\mu}}(\mathcal{T}_{\check{\mu}}(S^0))\subset\mathbb{R}^{\underline{\mathcal{W}}^2}$, by constructing, given a collection of shears $\sigma_{h}$ in $\mathbb{T}_{\check{\mu}}(S^0)$, the developing and holonomy maps by finite approximations.
We very quickly recall how this is done.

Equip the strip $S^0$ with its horocyclic foliation $F_{\check{\mu}}(S^0)$ and fix a leaf $f_{0}$ of that foliation.
The leaf $f_{0}$ joins both sides of $S^0$ and its endpoints are well-defined, whatever the hyperbolic structure on $S^0$ is, by their signed distances from the distinguished points lying on both sides of $S^0$. 
Give $f_{0}$ the same orientation as the core $\alpha_{core}$.
The developing map is constructed along $f_{0}$.

A {\bf finite approximation of a wedge cut} $\check{\mu}$ of $S^0$ is a sequence of \emph{finite} subsets $(I_{n})$ of indices of $I$ such that, for all $n$,
$I_{n}\subset I_{n+1}$ and $\bigcup_{n} I_{n}=I$. 

Choose a finite approximation $(I_{n})$ of the wedge cut $\check{\mu}$. 
For each $n$ let $\mathcal{W}_{n}\subset\mathcal{W}$ be the set of wedges indexed by $I_{n}\subset I$ and $\underline{\mathcal{W}}^{2}_{n}=(\mathcal{W}_{n}\times\mathcal{W}_{n}-\Delta)/\mathbb{Z}_{2}$ be the set of unordered pairs of different wedges of $\mathcal{W}_{n}$.
Given a collection of infinitely many shears $\sigma\in\mathbb{T}_{\check{\mu}}(S^0)$, we get for each $n$ a finite collection of shears $\sigma_{n}$ between pairs of wedges in $\underline{\mathcal{W}}^{2}_{n}$:
if $i,j\in I_n\subset I$, then the shear between the wedges $w_i$ and $w_j$ is $\sigma_{n}(w_i,w_j):=\sigma(w_i,w_j)$.

For each $n$ a strip $S_{n}$ is constructed by gluing the wedges indexed by $I_{n}$ edge-to-edge using the shears $\sigma_{n}$:
if $i$ and $j$ are two adjacent indexes in $I_{n}$, glue the wedges $w_i$ and $w_j$ so that they are sheared by $\sigma_{n}(w_i,w_j)$.
Adjacent wedges of $\mathcal{W}_n$ are all glued together using this process using the total ordering on $I_n$.
This yields a strip $S_{n}$ together with a finite wedge cut $\check{\mu}_{n}$.
We say that the pair $(S_{n},\check{\mu}_{n})$ is a {\bf geometric realization} of the finite approximation $I_{n}$ (it is unique up to isometry).
The strip $S_n$ is equipped with its horocyclic foliation $F_{\check{\mu}_n}(S_n)$.

The core of the construction lies in showing that there is a sequence of properly chosen geometric realizations which converges in the following sense: 
the strips $S_n$ and the wedge cuts $\check{\mu}_n$, seen as compact subsets of the unit disc in the complex plane, converge in the Hausdorff topology to a strip $S$ equipped with the wedge cut $\check{\mu}$. 
Furthermore, the horocyclic foliations $F_{\check{\mu}_n}(S_n)$ converge to the horocyclic foliation of $S$: more precisely, each leaf of $F_{\check{\mu}_n}(S_n)$ parameterized by arc-length (respectively its length) converges uniformly to the corresponding leaf of $F_{\check{\mu}}(S)$ (respectively the length of the corresponding leaf).  
This is done in \cite{Thurston86} Proposition 4.1 p.14 (see also \cite{Bonahon96} and \cite{PapThe07}).

The construction shows in particular that the shear map $\sigma_{\check{\mu}}$ is a bijection between $\mathbb{T}_{\check{\mu}}(S^0)$ and $\mathcal{T}_{\check{\mu}}(S^0)$.
Some work remains to show that this map is actually a homeomorphism.

\begin{theorem}
Let $S^0$ be a strip and $\check{\mu}$ a wedge cut of $S^0$.
The shear map $\sigma_{\check{\mu}}$ is a homeomorphism from $\mathcal{T}_{\check{\mu}}(S^0)$ to $\mathbb{T}_{\check{\mu}}(S^0)$.
\end{theorem}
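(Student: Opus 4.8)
The plan is to build on what has already been done. The display construction above shows that $\sigma_{\check\mu}$ is a bijection from $\mathcal{T}_{\check\mu}(S^0)$ onto $\mathbb{T}_{\check\mu}(S^0)$, so the remaining task is purely topological: prove that $\sigma_{\check\mu}$ and $\sigma_{\check\mu}^{-1}$ are continuous. First I would fix the topologies explicitly. On the target, $\mathbb{T}_{\check\mu}(S^0)\subset\mathbb{R}^{\underline{\mathcal W}^2}$ carries the subspace topology of the product topology, i.e. pointwise convergence of the collection of shears. On $\mathcal{T}_{\check\mu}(S^0)$ I would use the topology of geometric convergence: normalise developing maps so that the fixed base leaf $f_0$ of $F_{\check\mu}(S^0)$ always goes to one prescribed leaf of $\mathbb{H}^2$ (its endpoints being recorded by their signed distances to the distinguished points on the two sides of the strip), and declare two structures close when the corresponding strips, wedge cuts and horocyclic foliations, regarded as compact subsets of the closed unit disc, are Hausdorff-close and the arclength-parametrised horocyclic leaves are uniformly close. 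Both spaces are then metrisable, so it is enough to work with sequences.

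Continuity of $\sigma_{\check\mu}$ is the soft half. Fix a pair of wedges $w_i,w_j$ carrying ideal triangles $T_i,T_j$. By definition the shear $\sigma_{\check\mu}(h)(w_i,w_j)$ equals the signed distance, measured on the $T_j$-side, between the distinguished point of $T_j$ and the image of the distinguished point of $T_i$ under the horocyclic foliation of the sub-strip $S_{i,j}$ bounded by $T_i$ and $T_j$ (equivalently, the signed transverse measure of a horogeodesic curve joining the centres). If $h_n\to h$ geometrically then the ideal triangles $T_i(h_n),T_j(h_n)$, the (at most countably many) leaves of $\check\mu$ crossing the core of $S_{i,j}$, and the horocyclic foliations all converge in the above sense; hence the distinguished points and the foliated projection converge, so $\sigma_{\check\mu}(h_n)(w_i,w_j)\to\sigma_{\check\mu}(h)(w_i,w_j)$. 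Since $w_i,w_j$ were arbitrary and the target topology is the topology of pointwise convergence, $\sigma_{\check\mu}(h_n)\to\sigma_{\check\mu}(h)$.

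For continuity of $\sigma_{\check\mu}^{-1}$ I would use a subsequence argument. Let $\sigma_n\to\sigma$ in $\mathbb{T}_{\check\mu}(S^0)$ and set $h_n=\sigma_{\check\mu}^{-1}(\sigma_n)$, $h=\sigma_{\check\mu}^{-1}(\sigma)$. It suffices to prove that $(h_n)$ is relatively compact in $\mathcal{T}_{\check\mu}(S^0)$: any subsequential limit $h'$ then satisfies $\sigma_{\check\mu}(h')=\lim\sigma_n=\sigma=\sigma_{\check\mu}(h)$ by the previous paragraph, hence $h'=h$ by injectivity, so the whole sequence converges to $h$. To get relative compactness I would fix a finite approximation $(I_m)$ of $\check\mu$. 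For each fixed $m$, the geometric realisation $\tau\mapsto\bigl(S_m(\tau),\check\mu_m,F_{\check\mu_m}(S_m(\tau))\bigr)$, obtained by gluing the finitely many wedges indexed by $I_m$ edge-to-edge with the prescribed shears, depends continuously (in fact real-analytically) on the finitely many coordinates $\tau(w_i,w_j)$ with $i,j\in I_m$; hence $(S_m(\sigma_n))_n$ converges as $n\to\infty$ and is in particular relatively compact. On the other hand, Thurston's proof of Proposition 4.1 of \cite{Thurston86} (see also \cite{Bonahon96}, \cite{PapThe07}) provides, beyond mere existence of the limit, quantitative control of the Hausdorff distance, and of the uniform distance between arclength-parametrised horocyclic leaves, between $S_m(\tau)$ and $S(\tau)=\sigma_{\check\mu}^{-1}(\tau)$; these estimates are locally uniform in $\tau$, so on the compact set $K=\{\sigma_n\}_n\cup\{\sigma\}\subset\mathbb{T}_{\check\mu}(S^0)$ one has $S_m(\tau)\to S(\tau)$ uniformly for $\tau\in K$. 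Combining: given $\varepsilon>0$, choose $m$ with $S_m(\sigma_n)$ within $\varepsilon$ of $h_n$ for all $n$; relative compactness of $(S_m(\sigma_n))_n$ then covers $(h_n)$ by finitely many $2\varepsilon$-balls, and a diagonal argument over $\varepsilon\to0$ extracts a convergent subsequence. (The same ingredients also give directly $h_n\to h$ through the three-$\varepsilon$ chain $h_n=S(\sigma_n)\approx S_m(\sigma_n)\approx S_m(\sigma)\approx S(\sigma)=h$, the middle closeness for $n$ large by finite-dimensional continuity, the outer ones uniformly in $n$ for $m$ large.)

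The main obstacle is the one genuinely non-formal ingredient of the last paragraph: the local uniformity of the finite-approximation convergence, namely that on compact subsets of $\mathbb{T}_{\check\mu}(S^0)$ the strips $S_m$ approach $S$ uniformly in the shears. This reduces to checking that the error terms already controlled in Thurston's construction — principally the total horocyclic length of the part of $S(\tau)$ lying outside the region glued from $I_m$, together with the distortion of the developing map over that part — are bounded above by quantities depending only continuously (hence boundedly on compacta) on $\tau$, which can be read off from the estimates in \cite{Thurston86}. Everything else (the finite-dimensional continuity of the gluing map, the metrisability of the two spaces, and the subsequence and diagonal bookkeeping) is routine.
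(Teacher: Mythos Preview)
The paper does not actually prove this theorem. After the construction establishing bijectivity it simply remarks that ``Some work remains to show that this map is actually a homeomorphism,'' states the theorem, and moves on --- effectively deferring the homeomorphism claim to the cited literature (Thurston's Proposition~4.1 in \cite{Thurston86}, together with \cite{Bonahon96} and \cite{PapThe07}). Your proposal therefore supplies considerably more detail than the paper's own treatment.

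Your outline is sound and is essentially the argument one finds when unpacking those references. Continuity of $\sigma_{\check\mu}$ is indeed the soft direction, and your reasoning is correct once the topologies are pinned down as you do (note that $\underline{\mathcal W}^2$ is countable, so the product topology on $\mathbb{R}^{\underline{\mathcal W}^2}$ is genuinely metrisable). For the inverse, the subsequence-plus-compactness strategy is the natural one, and you correctly isolate the single substantive ingredient: local uniformity, in the shear data, of the convergence of the finite geometric realisations $S_m(\tau)$ to the limiting strip $S(\tau)$. This is exactly what Thurston's estimates give: the discrepancy between $S_m(\tau)$ and $S(\tau)$ is governed by the total horocyclic length of the wedges not yet glued, which is bounded by a geometric series whose ratio depends continuously on finitely many shears at a time; hence the error is bounded uniformly on compacta of $\mathbb{T}_{\check\mu}(S^0)$. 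Your three-$\varepsilon$ chain at the end is in fact the cleanest way to conclude and makes the diagonal extraction unnecessary.

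One small caution: the paper never fixes a topology on $\mathcal{T}_{\check\mu}(S^0)$ explicitly, so part of what you are doing is choosing one. Your choice --- Hausdorff convergence of the normalised strip and wedge cut in the closed disc, together with uniform convergence of arclength-parametrised horocyclic leaves --- is the natural one and matches precisely what the finite-approximation construction delivers, so there is no circularity; but it is worth flagging that the statement is only meaningful relative to such a choice.
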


We know from Section \ref{subsection:local_shear_general} that in the surface $\Sigma$, the finite collection of distinguished shears obtained from a train-track approximation $\Theta$ defines all other shears between pairs of ideal triangles, each of the latter being an integral linear combination of the formers.
We thus get an \emph{injective} linear map $q_{\Theta,e}\,:\,\textrm{T}(\Theta)\to\mathbb{T}_{\check{\mu}}(S^0)$ for each strip $S^0$ coming from a branch $e$ of $\Theta$.
Distinguished shears in $\textrm{T}(\Theta)$ hence determine the developing map and the holonomy map in a neighbourhood of each branch of $\Theta$, hence over the surface $\Sigma$.
This shows that $\sigma_{\Theta}\,:\,\mathcal{T}(\Theta)\to\textrm{T}(\Theta)$ are local coordinates for the Teichm\"uller space of $\Sigma$ indeed. 

In the finite case, the collection of all shears between pairs of ideal wedges is also over-determined.
Because of Lemma \ref{lemma:separation}, only shears between pairs of adjacent wedges are actually needed.
We can thus define a linear projection $q_{n}\,:\,\mathbb{R}^{\underline{\mathcal{W}}^{2}_{n}}\to\mathbb{R}^{\vert\mathcal{W}_{n}\vert-1}$ by keeping only shears between adjacent wedges of $\mathcal{W}_{n}$, since they determine the other shears.
The restriction of the shear map $\sigma_{\check{\mu}_{n}}$ to shears between adjacent wedges defines a homeomorphism between $\mathcal{T}_{\check{\mu}_{n}}(S_{n})$ and $\mathbb{R}^{\vert\mathcal{W}_{n}\vert-1}$ and we talk in this case about shear coordinates.

\section{Convexity of length functions}
\label{section:convexity}

\subsection{Convexity of length in a wedge}
\label{subsection:wedgeconvexity}

We throughout rely on notation introduced in Section \ref{section:shearcoordinates}.
Let us consider a wedge $w$ in the upper half-plane model of $\mathbb{H}^{2}$ so that the common endpoint of the boundary geodesics is at infinity.
We equip this wedge with its two distinguished points and orient the edges of $w$ positively toward infinity.
The positive parameterization of these two oriented geodesics by arc length provides a map $\mathbb{R}\times\mathbb{R}\to\partial w$ which is unique if we impose the image of $(0,0)$ to be the two distinguished points.
Let $d\,:\,\mathbb{R}\times\mathbb{R}\to\mathbb{R}_{+}$ be the distance function between the points of $\partial w$ parameterized by $\mathbb{R}\times\mathbb{R}$.
A straightforward consequence of the strict convexity of the hyperbolic distance function (\cite{Thurston97}, Theorem 2.5.8 p.90) is 

\begin{proposition}
\label{proposition:d_convex}
The map $d$ is strictly convex on $\mathbb{R}\times\mathbb{R}$.
\end{proposition}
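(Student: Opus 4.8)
The plan is to reduce the convexity of the two-variable distance function $d$ on $\partial w$ to the strict convexity of the hyperbolic distance function on $\mathbb{H}^2 \times \mathbb{H}^2$, which is the cited result (\cite{Thurston97}, Theorem 2.5.8). First I would set up explicit coordinates: place the wedge $w$ in the upper half-plane with apex at $\infty$, so that the two boundary geodesics are vertical Euclidean rays, say the lines $\mathrm{Re}(z)=0$ and $\mathrm{Re}(z)=c$ for some $c>0$ (the Euclidean separation is irrelevant since everything is measured hyperbolically). Arc-length parameterization of a vertical geodesic $\mathrm{Re}(z)=a$ is $t\mapsto a+ie^{t}$ up to an additive normalization of $t$; the normalization is pinned down by requiring that $t=0$ correspond to the distinguished points, which are the feet of the perpendiculars from the symmetry axis of the wedge onto the two sides. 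So we get two curves $\gamma_1(s) = ie^{\phi(s)}$ and $\gamma_2(t)=c+ie^{\psi(t)}$ with $\phi,\psi$ affine in $s,t$ respectively (indeed $\phi(s)=s+\text{const}$, $\psi(t)=t+\text{const}$), and $d(s,t)=d_{\mathbb{H}^2}(\gamma_1(s),\gamma_2(t))$.

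Next I would invoke the general principle that a strictly convex function precomposed with a map that is affine on each geodesic factor and whose image is not contained in a single geodesic remains convex, and is strictly convex provided no nontrivial segment in the domain maps into a geodesic of $\mathbb{H}^2\times\mathbb{H}^2$ along which $d_{\mathbb{H}^2}$ fails to be strictly convex. Concretely: the hyperbolic distance $d_{\mathbb{H}^2}(p,q)$ is a convex function of $(p,q)$ for the product metric, and along any geodesic $t\mapsto(p(t),q(t))$ in $\mathbb{H}^2\times\mathbb{H}^2$ (meaning $p(t),q(t)$ are each constant-speed geodesics) it is strictly convex unless the two geodesics $p(\cdot)$ and $q(\cdot)$ are the same geodesic traversed at the same speed in the same direction — i.e.\ unless $t\mapsto d_{\mathbb{H}^2}(p(t),q(t))$ is constant. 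Our map $(s,t)\mapsto(\gamma_1(s),\gamma_2(t))$ sends each Euclidean straight segment in $\mathbb{R}\times\mathbb{R}$ to such a product-geodesic (because $\phi,\psi$ are affine, $\gamma_i$ are constant-speed geodesics, and a linearly parameterized segment in the $(s,t)$-plane gives linearly reparameterized geodesics, hence still geodesics for the product metric after passing to an affine parameter). Since $\gamma_1$ lies on $\mathrm{Re}(z)=0$ and $\gamma_2$ on $\mathrm{Re}(z)=c\neq 0$, these are genuinely distinct geodesics, so they never coincide as parameterized geodesics; hence $d$ is strictly convex along every segment, which is exactly strict convexity on $\mathbb{R}\times\mathbb{R}$.

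The main obstacle, and the one I would write out with some care, is the equality case: convexity of $d$ is immediate from convexity of $d_{\mathbb{H}^2}$ and the fact that we are restricting to an affine slice, but strictness requires ruling out that $d$ is affine on some segment of the $(s,t)$-plane. For this one needs the sharp form of Theorem 2.5.8 — not merely that $d_{\mathbb{H}^2}$ is convex but that the only way convexity degenerates along a geodesic in the product is the "parallel at equal speed" case, which forces the two geodesics to be equal. Since the apex at $\infty$ is common to $\gamma_1$ and $\gamma_2$ only as an ideal endpoint (they diverge, at infinite distance, toward it) and not as finite points, and the geodesics supporting them are distinct vertical lines, equality is excluded. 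I would also note the boundary-behavior check: $d$ blows up as $s\to+\infty$ or $t\to+\infty$ and as $s,t\to-\infty$, consistent with (but not needed for) strict convexity; the essential input is purely local convexity plus the rigidity in the equality case, so the argument is short once the coordinates and the reduction are in place.
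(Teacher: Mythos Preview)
Your proposal is correct and follows exactly the route the paper indicates: the paper gives no proof beyond the sentence ``A straightforward consequence of the strict convexity of the hyperbolic distance function (\cite{Thurston97}, Theorem 2.5.8 p.90),'' and your argument is a careful unpacking of precisely that reduction. Your handling of the equality case (ruling out that the two boundary geodesics coincide as parametrized geodesics, since they lie on distinct vertical lines) is the only point requiring any thought, and you treat it correctly.
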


We now want to generalize this result to strips and we are looking for establishing strict convexity of the length of a geodesic segment with endpoints on opposite sides of a strip in terms of their positions with respect to some fixed distinguished points.
Of course, convexity of the length of such a segment easily follows from the convexity of hyperbolic distance function.
But we actually want more: we shall also allow the strip to be deformed through shearing along a fixed wedge cut,
so our convexity result shall depend upon the positions of the endpoints of the segment but also on the shear coordinates describing the strip.
We shall first study the case where the wedge cut is \emph{finite} and we shall take up the general case later by approximating a given infinite wedge cut by a sequence of finite wedge cuts. 

\subsection{Convexity of length function over the Teichm\"uller space of a strip equipped with a finite wedge cut}
\label{subsection:finiteconvexity}

Let us consider a strip $S^0$ in $\mathbb{H}^{2}$ bounded by two geodesics $g^{-}$ and $g^{+}$ and equipped with a \emph{finite} wedge cut $\check{\mu}$ ($g^{\pm}\subset\check{\mu}$).
Fix one distinguished point $A^{+}$ on $g^{+}$ and one distinguished point $A^{-}$ on $g^{-}$.

Recall that $\mathcal{W}$ denotes the set of wedges cut in the strip $S^0$ by $\check{\mu}$.
We hence work for the time being under the assumption that $\vert\mathcal{W}\vert$ is finite.
Note that we have $\vert\check{\mu}\vert=\vert\mathcal{W}\vert+1$.
We index the leaves of $\check{\mu}$ from $g^{-}$ to $g^{+}$.

Recall that Teichm\"uller space $\mathcal{T}_{\check{\mu}}(S^0)$ is here finite-dimensional and that shear coordinates define a homeomorphism $\sigma_{\check{\mu}}$ from $\mathcal{T}_{\check{\mu}}(S^0)$ to $\mathbb{R}^{\vert\check{\mu}\vert-2}$ (see Section \ref{section:developing}).
Set $\mathbf{x}^{0}=\sigma_{\check{\mu}}(S^0)$ and let $\mathbf{x}=(x_{2},\cdots,x_{\vert\check{\mu}\vert-1})$ be a point of $\mathbb{R}^{\vert\check{\mu}\vert-2}$ representing a hyperbolic structure $S$ of the strip $S^0$ in $\mathcal{T}_{\check{\mu}}(S^0)$.
Recall that each $x_{j}$, $2\leq j\leq\vert\check{\mu}\vert-1$, can be viewed as a number attached to the \emph{inner}, $j$-th leaf of $\check{\mu}$ encoding the shear between the two adjacent wedges glued along that leaf.
(In what follows, the letter $j$ will be used to index leaves of laminations while the letter $i$ will be preferably used for indexing wedges.)

Consider a geodesic segment $\alpha$ joining the sides of $S^0$ and denote by $y^{+}$ and $y^{-}$ the signed distances of its endpoints with respect to the distinguished points $A^{+}$ and $A^{-}$ respectively. 
We want to show that the length of $\alpha$ is a strictly convex function of $y^{\pm}$ and $\mathbf{x}$.
The idea of the proof we borrow from \cite{BBFS09} is to study the length of piecewise geodesic curves $\beta$ joining the points parameterized by $y^{+}$ and $y^{-}$.
More precisely, a \textbf{$\check{\mu}$-piecewise geodesic curve} $\beta$ shall by definition refer to any curve such that the intersection of $\beta$ with the interior of every wedge is a geodesic segment -- we call a \textbf{wedge segment} -- 
and its intersection with every \emph{inner} leaf of $\check{\mu}$ is also a geodesic segment -- we call a \textbf{leaf segment}.
We let $\mathcal{C}_{\check{\mu}}(S^0)$ be the set of all these $\check{\mu}$-piecewise geodesic curves.
A curve $\beta$ of $\mathcal{C}_{\check{\mu}}(S^0)$ is encoded by the positions of the endpoints of its \emph{wedge segments} with respect to the distinguished points of the wedges, that is, by $2\times\vert\mathcal{W}\vert$ real numbers we denote by $\textbf{y}=(y_{1}^{\pm},\ldots,y_{\vert\mathcal{W}\vert}^{\pm})$, where $y_{i}^{\pm}$ encode the endpoints of the $i$-th wedge segment.
This gives a bijective map $\tau_{S^0}\,:\,\mathcal{C}_{\check{\mu}}(S^0)\longrightarrow\left(\mathbb{R}^{\vert\mathcal{W}\vert}\right)^2$ which associates to 
the $\check{\mu}$-piecewise geodesic curve $\beta$ the positions of the endpoints of its wedge segments on the leaves of $\check{\mu}$.

For each pair $y^{\pm}\in\mathbb{R}^2$ there is a unique geodesic segment $\alpha$ joining the points on $\partial S^0$ parameterized by $y^{\pm}$.
For such a fixed geodesic segment $\alpha$ we denote by $\mathcal{C}^{\alpha}_{\check{\mu}}(S^0)$ the subset of all $\beta\in\mathcal{C}_{\check{\mu}}(S^0)$ having the same endpoints as $\alpha$.
We have $\ell_{\alpha}(S^0)=\inf_{\beta\in\mathcal{C}^{\alpha}_{\check{\mu}}(S^0)}\ell_{\beta}(S^0)$.
Since the infimum of strictly convex functions is itself strictly convex, it suffices to establish the strict convexity of every $\beta$.
Now the length of $\beta$ is the sum of the lengths of its geodesic pieces and we know from the previous section that each of them is a convex
function of the position of its endpoints (and is strictly convex for wedge segments).
Therefore the strict convexity of $\ell_{\beta}(S^0)$ follows in terms of the position numbers $\mathbf{y}$ for \emph{fixed} $S^0$.
Of course, this result also follows directly from the convexity of the distance function on $\mathbb{H}^{2}\times\mathbb{H}^{2}$.
However we want to take into account that $S^0$ can be deformed into another strip.
This implies to define a way of keeping track of the curve $\beta$ as the strip $S^0$ gets deformed into another strip $S$. 

So now we also deform the hyperbolic structure on $S^0$ by shearing it along $\check{\mu}$.
In order to pass from the strip $S^0=\sigma_{\check{\mu}}^{-1}(\mathbf{x}^{0})$ to the strip $S=\sigma_{\check{\mu}}^{-1}(\mathbf{x})$,
a shear of amplitude $x_{j}-x^{0}_{j}$ is performed along the \emph{inner} $j$-th leaf of $\check{\mu}$ ($2\leq j\leq\vert\check{\mu}\vert-1$).
Geometrically this shear is realized by fixing this $j$-th leaf which is the common geodesic of the adjacent wedges $w_{i}$ and $w_{i+1}$ and by isometrically translating
the upper half-plane containing $w_{i+1}$ by the amount $\frac{1}{2}(x_{j}-x^{0}_{j})$ and by isometrically translating
the lower half-plane containing $w_{i}$ by the same amount $\frac{1}{2}(x_{j}-x^{0}_{j})$ (positive translation amounts are towards the left).
The total shear that enables to pass from $S^0$ to $S$ is obtained by composing these geometric operations about each inner leaf of $\check{\mu}$;
different orders of composition yield different but isometric strips, so the order of composition turns out to be unimportant.

Let us have a closer look to what happens to a piecewise geodesic curve $\beta\in\mathcal{C}_{\check{\mu}}(S^0)$ in $S^0$ as the strip $S^0$ gets sheared into the strip $S$. 
As two adjacent wedges $w_{i}$ and $w_{i+1}$ get sheared, the endpoints of the wedge segments on the separating leaf get disconnected $x_{j}-x^{0}_{j}$ apart.
Their positions with respect to the distinguished points of $w_{i}$ and $w_{i+1}$ are still $y_{i}^{+}$ and $y_{i+1}^{-}$.
However, the leaf segment of $\beta$ contained in the common geodesic of $w_{i}$ and $w_{i+1}$ gets disconnected from the wedge segment in that geodesic and its endpoint positions are changed into $y_{i}^{+}+\frac{1}{2}(x_{j}-x^{0}_{j})$ and $y_{i+1}^{-}+\frac{1}{2}(x_{j}-x^{0}_{j})$.
We choose to reconnect the wedge segments to the leaf segment contained in the leaf of $\check{\mu}$ separating $w_{i}$ and $w_{i+1}$.
We perform this reconnecting process for every inner leaf of $\check{\mu}$: this gives the definition of the piecewise geodesic curve $\beta$ in the sheared strip $S$.
Note that every leaf segment of $\beta$ has the same length for the strips $S^0$ and $S$.
Note also that the numbers $y^{-}, y_{1}^{-}, y^{+}, y_{\vert\mathcal{W}\vert}^{+}$ remain unchanged whatever ${\bf x}$ is.

Another way of describing this reconnecting process is by saying that the curve $\beta=\tau_{S^0}^{-1}(\mathbf{y})$ in the strip $S=\sigma_{\check{\mu}}^{-1}(\mathbf{x})$ is defined by the 
position numbers $\mathbf{y}+\frac{1}{2}(\overline{\mathbf{x}}-\overline{\mathbf{x}}^{0})$, where we extend the $\vert\check{\mu}-2\vert$-vectors ${\bf x}$ and ${\bf x}_0$ to $(2\times\vert\mathcal{W}\vert)$-vectors denoted by $\overline{{\bf x}}$ and $\overline{{\bf x}}_0$, by adding one zero for the first component and one zero for the last component and by repeating twice every other component (thus if ${\bf x}=(x_1,x_2,\cdots,x_{\vert\check{\mu}\vert-1})$ then $\overline{{\bf x}}=(0,x_1,x_1,x_2,x_2\cdots,x_{\vert\check{\mu}\vert-1},x_{\vert\check{\mu}\vert-1},0)$).
We define a bijective map
$$
\tau\,:\,\mathcal{T}_{\check{\mu}}(S^0)\times\mathcal{C}_{\check{\mu}}(S^0)\longrightarrow\mathbb{R}^{\vert\check{\mu}\vert-2}\times\left(\mathbb{R}^{\vert\mathcal{W}\vert}\right)^2,
$$
such that $\tau(S^0,\cdot)=\tau_{S^0}(\cdot)$ and $\tau(S,\beta)=\mathbf{y}+\frac{1}{2}(\overline{\mathbf{x}}-\overline{\mathbf{x}}^{0})$ where $S=\sigma_{\check{\mu}}^{-1}(\mathbf{x})$ and $\beta=\tau_{S^0}^{-1}(\mathbf{y})$.
This reconnecting process we have just defined has a slight drawback: in general, starting from any geodesic segment $\alpha$ in $S^0$ and applying this process gives endpoint positions that \emph{are not} the 
endpoint positions of the genuine geodesic segment $\alpha$ in $S$; in other words, $\tau_{S}(\alpha)\neq\tau(S,\alpha)$. 
Hence we might have $\tau_{S}(\cdot)\neq\tau(S,\cdot)$ whenever $S\neq S^0$. 
This feature will nevertheless cause no trouble.

Now that we have defined a way of recognizing a curve $\beta$ in all possible strips of $\mathcal{T}_{\check{\mu}}(S^0)$, we can focus on the length of those curves.
We define length function 
$$
\underline{\ell}\,:\,\mathbb{R}^{\vert\check{\mu}\vert-2}\times\left(\mathbb{R}^{\vert\mathcal{W}\vert}\right)^2\longrightarrow\mathbb{R}_{+},
$$
by
$$
\underline{\ell}(\mathbf{x},\mathbf{y})=\ell_{\beta}(S),
$$
where $(S,\beta)=\tau^{-1}(\mathbf{x},\mathbf{y})$.
It is now easy to show the strict convexity of this length function.

\begin{proposition}
The length function $\underline{\ell}$ is strictly convex over $\mathbb{R}^{\vert\check{\mu}\vert-2}\times\left(\mathbb{R}^{\vert\mathcal{W}\vert}\right)^2$.
\end{proposition}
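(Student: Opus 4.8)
The plan is to read $\underline{\ell}$ off the map $\tau$ of the preceding paragraph and to split it into contributions coming from the individual geodesic pieces of the $\check{\mu}$-piecewise geodesic curve. Write $(S,\beta)=\tau^{-1}(\mathbf{x},\mathbf{y})$. By construction $\beta$ has exactly one \emph{wedge segment} in the interior of each wedge $w_{i}$ ($1\le i\le\vert\mathcal{W}\vert$) and one \emph{leaf segment} inside each \emph{inner} leaf $g_{j}$ of $\check{\mu}$, and $\ell_{\beta}(S)$ is the sum of the hyperbolic lengths of these finitely many segments, so that
$$
\underline{\ell}(\mathbf{x},\mathbf{y})=\sum_{i=1}^{\vert\mathcal{W}\vert}\ell^{w}_{i}(\mathbf{x},\mathbf{y})+\sum_{j}\ell^{g}_{j}(\mathbf{x},\mathbf{y}).
$$
It then suffices to show that each wedge term $\ell^{w}_{i}$ is \emph{strictly} convex and each leaf term $\ell^{g}_{j}$ is convex, and to invoke that a finite sum of convex functions, at least one of which is strictly convex, is strictly convex.

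For the wedge terms I would fix, for each $i$, an orientation-preserving isometry of $w_{i}$ onto the model wedge $w$ of Subsection \ref{subsection:wedgeconvexity} carrying the two distinguished points of $w_{i}$ to those of $w$. By the very definition of the position numbers and of the reconnecting rule $\mathbf{y}\mapsto\mathbf{y}+\tfrac12(\overline{\mathbf{x}}-\overline{\mathbf{x}}^{0})$, the endpoints of the $i$-th wedge segment of $\beta$, measured along the two positively oriented sides of $w_{i}$ from its distinguished points, depend affinely on $(\mathbf{x},\mathbf{y})$ — in fact they are (up to additive constants) two of the ambient coordinates. Composing this affine map with the distance function $d$ of Proposition \ref{proposition:d_convex}, which is strictly convex, shows $\ell^{w}_{i}$ is a strictly convex function of those two coordinates, hence a convex function on all of $\mathbb{R}^{\vert\check{\mu}\vert-2}\times(\mathbb{R}^{\vert\mathcal{W}\vert})^{2}$. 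For the leaf terms, the segment of $\beta$ inside $g_{j}$ is a sub-arc of the \emph{geodesic} $g_{j}$, so its length is simply the distance along $g_{j}$ between its two endpoints; the arc-length coordinates of those two endpoints on $g_{j}$, in the sheared strip $S=\sigma_{\check{\mu}}^{-1}(\mathbf{x})$, are affine functions of $(\mathbf{x},\mathbf{y})$, the dependence on the shear coordinate $x_{j}$ entering only through the signed distance by which the two wedges adjacent to $g_{j}$ have been re-glued. Thus $\ell^{g}_{j}$ is the absolute value of an affine function, and is therefore convex. Summing, $\underline{\ell}$ is convex; and it is strictly convex because the strictly convex wedge contributions already control all the position coordinates, so no nonzero displacement of the parameters can leave every wedge segment rigid.

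The step that really requires care — and the one I would spend the argument on — is the first one: making the identification of each wedge with the model wedge $w$ precise enough that Proposition \ref{proposition:d_convex} applies verbatim \emph{and} that the endpoint positions of the wedge segments come out as honest affine functions of $(\mathbf{x},\mathbf{y})$; this is exactly what the construction of $\tau$ (and the bookkeeping $\overline{\mathbf{x}}$, the reconnecting process) was set up to deliver, so here it reduces to unwinding definitions. The rest is soft: absolute values of affine functions are convex, and strict convexity of a sum is inherited from a strictly convex summand. One should note in passing that the leaf terms are only convex (indeed piecewise affine), so strict convexity genuinely rests on the wedge terms and hence on Thurston's strict convexity of the hyperbolic distance function quoted in Proposition \ref{proposition:d_convex}.
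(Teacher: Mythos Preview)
Your argument mirrors the paper's: decompose $\underline{\ell}$ into wedge-segment and leaf-segment contributions, obtain strict convexity of each wedge contribution by composing the affine endpoint map with the strictly convex distance $d$ of Proposition~\ref{proposition:d_convex}, check that each leaf contribution is convex, and conclude strict convexity of the sum from the wedge terms. The only visible difference is that you describe each leaf-segment length as the absolute value of an affine function of $(\mathbf x,\mathbf y)$, whereas the paper writes that these lengths are ``constant''; your version is the accurate one (the leaf length visibly depends on $y_i^+$ and $y_{i+1}^-$), but either reading yields convexity, so the skeleton of the argument is unchanged.

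The step that deserves more care --- in your write-up and in the paper's --- is the final clause that ``no nonzero displacement of the parameters can leave every wedge segment rigid.'' Collecting the endpoint maps for all wedges into a single affine map $\mathbb{R}^{|\check\mu|-2}\times(\mathbb{R}^{|\mathcal W|})^2\to(\mathbb{R}^{|\mathcal W|})^2$, one has a surjection from a space of dimension $3|\mathcal W|-1$ onto one of dimension $2|\mathcal W|$, and hence a kernel of dimension $|\mathcal W|-1$. Along any direction in this kernel every wedge contribution is constant, and the leaf contributions, being piecewise affine, cannot supply strict convexity there. (Concretely, with $|\mathcal W|=2$: moving the single shear $x_2$ while simultaneously shifting $y_1^+$ and $y_2^-$ by $-\tfrac12\delta x_2$ leaves both wedge segments unchanged.) The paper's justification (``there exists at least one $\underline{\ell}_{w_i}$ which is not constant along the affine path'') glosses over the same point. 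So strict convexity on the \emph{full} product does not quite follow from this decomposition alone; what the argument does yield cleanly is strict convexity in the $\mathbf y$-directions, which is what is effectively used in the ensuing corollaries after infimizing over the interior $\mathbf y$-coordinates.
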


\begin{proof}
Consider the $i$-th wedge $w_i$ of $\mathcal{W}$ bordered by the $j$-th and $(j+1)$-th leaves of $\check{\mu}$ ($1\leq i\leq\vert\mathcal{W}\vert$).
The map 
$$
(x_{j},y_{i}^{-},x_{j+1},y_{i}^{+})\mapsto\left(y_{i}^{-}+\frac{1}{2}(x_{j}-x_{j}^{0}),y_{i}^{+}+\frac{1}{2}(x_{j+1}-x_{j+1}^{0})\right)
$$
that associates to $(x_{j},y_{i}^{-},x_{j+1},y_{i}^{+})$ the signed distance from distinguished points of the points on $\partial w$ encoded by these four numbers is linear (if $j=1$, respectively $j+1=\vert\check{\mu}\vert$, we set $x_j=0=x_j^0$, respectively $x_{j+1}=x_{j+1}^0=0$).

Compounding with the distance function, Proposition \ref{proposition:d_convex} implies that the length of a wedge segment in $w_i$ is a \emph{strictly} convex function $\underline{\ell}_{{w}_{i}}$ of $(x_{j},y_{i}^{-},x_{j+1},y_{i}^{+})$ and is therefore a convex function of $(\mathbf{x},\mathbf{y})$.

Consider the $j$-th leaf $l_j$ of $\check{\mu}$ separating by the $i$-th and $(i+1)$-th wedges of  $\mathcal{W}$ ($2\leq j\leq\vert\check{\mu}\vert-1$).
The function $\underline{\ell}_{{l}_{j}}$ that associates to $(y_{i}^{+},y_{i+1}^{-})$ the length of the leaf segment contained in the $j$-th leaf of $\check{\mu}$ and joining the points encoded by those two numbers is constant, hence is a convex function of $(\mathbf{x},\mathbf{y})$.
Now the length function $\underline{\ell}$ is the sum of these maps, namely, 
$$
\underline{\ell}=\sum_{i=1}^{\vert\mathcal{W}\vert}\underline{\ell}_{{w}_{i}}+\sum_{j=2}^{\vert\check{\mu}\vert-1}\underline{\ell}_{{l}_{j}}.
$$
It is therefore a convex function which is actually strictly convex since for any two points $(\mathbf{x},\mathbf{y})$ and $(\mathbf{x}',\mathbf{y}')$ in 
$\mathbb{R}^{\vert\check{\mu}\vert-2}\times\left(\mathbb{R}^{\vert\mathcal{W}\vert}\right)^2$ there exists at least one function $\underline{\ell}_{{w}_{i}}$ which is not constant,
hence which is strictly convex, along the affine path joining $(\mathbf{x},\mathbf{y})$ and $(\mathbf{x}',\mathbf{y}')$ in $\mathbb{R}^{\vert\check{\mu}\vert-2}\times\left(\mathbb{R}^{\vert\mathcal{W}\vert}\right)^2$.
This concludes the proof.
\end{proof}

We now turn to the length of the geodesic curve $\alpha$.
It is worth mentioning here that, for each strip $S$, all $\check{\mu}$-piecewise geodesic curves $\beta$ obtained by our reconnecting process are \emph{connected}.
Therefore, for each $S$ and each geodesic segment $\alpha$ connecting the sides of $\partial S$, 
$$
\ell_{\alpha}(S)=\inf_{\beta\in\mathcal{C}^{\alpha}_{\check{\mu}}(S^0)}\ell_{\beta}(S).
$$
The geodesic curve $\alpha$, as well as all $\beta$ in $\mathcal{C}^{\alpha}_{\check{\mu}}(S^0)$, connects points located on $\partial S$ with respect to the distinguished points $A^{\pm}$ by both numbers $y^{-}$ and $y^{+}$.
We denote by $R_\alpha$ the linear subspace of $\left(\mathbb{R}^{\vert\mathcal{W}\vert}\right)^2$ obtained by fixing the first and last coordinates $y_1^-$ and $y_{\vert\mathcal{W}\vert}^+$ of ${\bf y}$ so that they encode point positions matching those of $\alpha$.
In other words, if $y_{A^\pm}$ denote the positions of $A^{\pm}$ with respect to the first and to the last distinguished points of the wedges, then $y_1^-=y_{A^{-}}+y^{-}$ and $y_{\vert\mathcal{W}\vert}^+=y_{A^{+}}+y^{+}$.
Hence 
$$
\ell_{\alpha}(S)=\inf_{\mathbf{y}\in R_{\alpha}}\underline{\ell}(\mathbf{x},\mathbf{y}).
$$
We set, for all $\mathbf{x}\in\mathbb{R}^{\vert\check{\mu}\vert-2}$ and $y^{\pm}\in\mathbb{R}^2$,
\begin{eqnarray*}
\underline{\ell}_{\alpha}(\mathbf{x},y^{\pm})&:=&\ell_{\alpha}(S)\ \textrm{\scriptsize{(where $\alpha$ is the geodesic curve connecting the points given by $y^{\pm}$})}\\
&=&\inf_{\beta\in\mathcal{C}^{\alpha}_{\check{\mu}}(S^0)}\ell_{\beta}(S)\\
&=&\inf_{\mathbf{y}\in R_{\alpha}}\underline{\ell}(\mathbf{x},\mathbf{y}).
\end{eqnarray*}
Note that the infimum is uniquely attained (by $\alpha$).

\begin{corollary}
\label{corollary:finiteconvexalpha}
The length function $\underline{\ell}_{\alpha}$ of $\alpha$ is strictly convex with respect to shear coordinates and signed distances of its endpoints.
\end{corollary}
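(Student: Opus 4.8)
The plan is to read off Corollary~\ref{corollary:finiteconvexalpha} directly from the strict convexity of $\underline{\ell}$ just established, by recognizing $\underline{\ell}_{\alpha}$ as a partial infimum of $\underline{\ell}$ over a family of parallel affine slices and using the (crucial) fact that this infimum is attained.

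First I would introduce a change of variables that turns the constrained infimum $\inf_{\mathbf{y}\in R_{\alpha}}$ into an unconstrained minimization over a fixed set of free coordinates. Split $\mathbf{y}\in\left(\mathbb{R}^{\vert\mathcal{W}\vert}\right)^2$ as the pair consisting of its two outer entries $(y_{1}^{-},y_{\vert\mathcal{W}\vert}^{+})$ and its remaining $2\vert\mathcal{W}\vert-2$ inner entries, which I call $\mathbf{z}$. Because $y_{A^{-}},y_{A^{+}}$ are fixed, the map $(y^{-},y^{+})\mapsto(y_{A^{-}}+y^{-},\,y_{A^{+}}+y^{+})=(y_{1}^{-},y_{\vert\mathcal{W}\vert}^{+})$ is an affine isomorphism of $\mathbb{R}^{2}$, and under it the slice $R_{\alpha}$ is exactly $\{(y^{-},y^{+})\}\times\mathbb{R}^{2\vert\mathcal{W}\vert-2}$. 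Assembling these pieces gives an affine isomorphism
$$
\Psi\,:\,\mathbb{R}^{\vert\check{\mu}\vert-2}\times\mathbb{R}^{2}\times\mathbb{R}^{2\vert\mathcal{W}\vert-2}\ \longrightarrow\ \mathbb{R}^{\vert\check{\mu}\vert-2}\times\left(\mathbb{R}^{\vert\mathcal{W}\vert}\right)^{2},\qquad(\mathbf{x},y^{\pm},\mathbf{z})\longmapsto(\mathbf{x},\mathbf{y}),
$$
(the dimensions agree, since the extra $\mathbb{R}^{2}$ compensates the two entries of $\mathbf{y}$ removed to form $\mathbf{z}$), and by construction
$$
\underline{\ell}_{\alpha}(\mathbf{x},y^{\pm})=\inf_{\mathbf{z}\in\mathbb{R}^{2\vert\mathcal{W}\vert-2}}\bigl(\underline{\ell}\circ\Psi\bigr)(\mathbf{x},y^{\pm},\mathbf{z}).
$$

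Next, since $\underline{\ell}$ is strictly convex (preceding proposition) and $\Psi$ is an affine isomorphism, $g:=\underline{\ell}\circ\Psi$ is strictly convex. I would then invoke the elementary fact that the partial infimum of a strictly convex function is strictly convex \emph{whenever the infimum is attained}. Concretely, writing $u=(\mathbf{x},y^{\pm})$ and assuming the infima of $g(u_{0},\cdot)$ and $g(u_{1},\cdot)$ are attained at $\mathbf{z}_{0},\mathbf{z}_{1}$ for some $u_{0}\neq u_{1}$, then for $t\in(0,1)$ and $u_{t}=(1-t)u_{0}+tu_{1}$ one has
$$
\inf_{\mathbf{z}}g(u_{t},\mathbf{z})\ \leq\ g\bigl(u_{t},(1-t)\mathbf{z}_{0}+t\mathbf{z}_{1}\bigr)\ <\ (1-t)\,g(u_{0},\mathbf{z}_{0})+t\,g(u_{1},\mathbf{z}_{1})\ =\ (1-t)\inf_{\mathbf{z}}g(u_{0},\mathbf{z})+t\inf_{\mathbf{z}}g(u_{1},\mathbf{z}),
$$
the middle inequality being strict because $(u_{0},\mathbf{z}_{0})\neq(u_{1},\mathbf{z}_{1})$ (as $u_{0}\neq u_{1}$). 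In our case the infimum defining $\underline{\ell}_{\alpha}(\mathbf{x},y^{\pm})$ is attained — indeed uniquely — namely by the genuine geodesic segment $\alpha$ in $S=\sigma_{\check{\mu}}^{-1}(\mathbf{x})$, as recorded right after the definition of $\underline{\ell}_{\alpha}$; this is precisely the hypothesis needed. Hence $\underline{\ell}_{\alpha}$ is strictly convex in $(\mathbf{x},y^{\pm})$, which is the assertion of the corollary.

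The one point that deserves care — and which I would flag as the crux — is this attainment of the infimum: partial minimization preserves convexity unconditionally, but preserves \emph{strict} convexity only when the minimum is realized. Here attainment is immediate, because $\alpha$ itself lies in $\mathcal{C}^{\alpha}_{\check{\mu}}(S^0)$ (a geodesic segment is trivially $\check{\mu}$-piecewise geodesic) and realizes the least length among the reconnected curves sharing its endpoints; the remaining verifications (that $\Psi$ is affine and bijective, and that the inner coordinates $\mathbf{z}$ are exactly those free to move inside $R_{\alpha}$) are routine bookkeeping.
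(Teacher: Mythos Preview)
Your proposal is correct and follows essentially the same approach as the paper: both arguments pick minimizers $\mathbf{y},\mathbf{y}'$ realizing the infimum at two distinct parameter points, evaluate $\underline{\ell}$ at the convex combination of these minimizers, and invoke strict convexity of $\underline{\ell}$ to obtain the strict inequality. Your explicit affine reparameterization $\Psi$ and your emphasis on why attainment of the infimum is the crux are helpful clarifications, but the underlying proof is the same chain of inequalities the paper writes down.
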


\begin{proof}
Let $(\mathbf{x},y^{\pm})$ and $(\mathbf{x}',y'^{\pm})$ be two different points of $\mathbb{R}^{\vert\check{\mu}\vert-2}\times\mathbb{R}^{2}$.
Let $\mathbf{y}$ and $\mathbf{y}'$ be two points of $R_\alpha$ such that 
$$
\underline{\ell}_{\alpha}(\mathbf{x},y^{\pm})=\underline{\ell}(\mathbf{x},{\bf y})\ \textrm{and}\ \underline{\ell}_{\alpha}(\mathbf{x}',y'^{\pm})=\underline{\ell}(\mathbf{x}',{\bf y}').
$$

Let $t$ be a real number in $[0;1]$.
We have
\begin{eqnarray*}
\underline{\ell}_{\alpha}((1-t)(\mathbf{x},y^{\pm})+t(\mathbf{x}',y'^{\pm}))
&=&\underline{\ell}_{\alpha}((1-t)\mathbf{x}+t\mathbf{x'},(1-t)y^{\pm}+ty'^{\pm})\\
&\leq&\underline{\ell}((1-t)\mathbf{x}+t\mathbf{x}',(1-t)\mathbf{y}+t\mathbf{y}')\quad\textrm{(as $\underline{\ell}_{\alpha}$ is the infimum)}\\
&\leq&\underline{\ell}((1-t)(\mathbf{x},\mathbf{y})+t(\mathbf{x}',\mathbf{y}'))\\
&<& (1-t)\underline{\ell}(\mathbf{x},\mathbf{y})+t\underline{\ell}(\mathbf{x}',\mathbf{y}')\quad\textrm{(strict convexity of $\underline{\ell}$)}\\
&<&(1-t)\underline{\ell}_{\alpha}(\mathbf{x},y^{\pm})+t\underline{\ell}_{\alpha}(\mathbf{x}',y'^{\pm})\quad\textrm{(definition of $\mathbf{y},\mathbf{y}'$)}.
\end{eqnarray*}
Hence $\underline{\ell}_{\alpha}$ is strictly convex.
\end{proof}

Similarly we get the following nice result.

\begin{corollary}
\label{corollary:finiteconvexcore}
The length function $\underline{\ell}_{\textrm{core}}$ of the core curve of a strip is strictly convex with respect to shear coordinates.
\end{corollary}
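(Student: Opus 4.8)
The plan is to derive Corollary \ref{corollary:finiteconvexcore} from Corollary \ref{corollary:finiteconvexalpha} by realizing the core curve of the strip $S$ as the infimum of lengths of geodesic segments joining the two sides of $S$, in a way that varies nicely with the shear coordinates $\mathbf{x}$. Recall that for any strip $S=\sigma_{\check{\mu}}^{-1}(\mathbf{x})$ bounded by two geodesics with no common endpoint, the core is precisely the shortest geodesic segment among all geodesic segments joining the two boundary geodesics $g^-$ and $g^+$; equivalently, $\ell_{\mathrm{core}}(S)=\inf_{y^\pm\in\mathbb{R}^2}\ell_\alpha(S)$ where $\alpha$ ranges over all such segments, parameterized by the signed distances $y^\pm$ of their endpoints from the distinguished points $A^\pm$.

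The key steps, in order, are as follows. First, define $\underline{\ell}_{\mathrm{core}}\,:\,\mathbb{R}^{\vert\check{\mu}\vert-2}\to\mathbb{R}_+$ by
$$
\underline{\ell}_{\mathrm{core}}(\mathbf{x})=\inf_{y^{\pm}\in\mathbb{R}^2}\underline{\ell}_{\alpha}(\mathbf{x},y^{\pm}),
$$
and check that this genuinely equals the length of the core of $S=\sigma_{\check{\mu}}^{-1}(\mathbf{x})$: the right-hand side is the infimum over all geodesic segments crossing $S$ of their lengths, which is attained by the core (the orthogonal segment between $g^-$ and $g^+$). Second, invoke Corollary \ref{corollary:finiteconvexalpha}, which tells us that $\underline{\ell}_{\alpha}$ is strictly convex on $\mathbb{R}^{\vert\check{\mu}\vert-2}\times\mathbb{R}^2$. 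Third, apply the general principle that a partial infimum of a strictly convex function is convex, and run a short computation — mirroring the proof of Corollary \ref{corollary:finiteconvexalpha} — to upgrade convexity to \emph{strict} convexity: given $\mathbf{x}\neq\mathbf{x}'$ with minimizers $y^\pm$ and $y'^\pm$ respectively, for $t\in(0,1)$ we estimate
$$
\underline{\ell}_{\mathrm{core}}((1-t)\mathbf{x}+t\mathbf{x}')\leq\underline{\ell}_{\alpha}((1-t)\mathbf{x}+t\mathbf{x}',(1-t)y^\pm+ty'^\pm)<(1-t)\underline{\ell}_{\mathrm{core}}(\mathbf{x})+t\underline{\ell}_{\mathrm{core}}(\mathbf{x}'),
$$
the strict inequality coming from strict convexity of $\underline{\ell}_\alpha$ applied along the segment from $(\mathbf{x},y^\pm)$ to $(\mathbf{x}',y'^\pm)$, which is a nondegenerate segment in the domain since $\mathbf{x}\neq\mathbf{x}'$.

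The main obstacle I anticipate is the second step: making sure the infimum over $y^\pm$ really computes the core length and, in particular, that it is \emph{attained}, so that the minimizers $y^\pm,y'^\pm$ used in the strict-convexity computation exist. If the two boundary geodesics of some strip in $\mathcal{T}_{\check{\mu}}(S^0)$ happened to be asymptotic (a wedge rather than a genuine strip), the core would degenerate and the infimum would be $0$ but not attained; one should either note that this does not occur within the relevant chart, or argue that $\underline{\ell}_\alpha(\mathbf{x},\cdot)$ is proper in $y^\pm$ (it grows without bound as the endpoints run off to infinity) so that a minimizer exists for each fixed $\mathbf{x}$. Once attainment is secured, the strict-convexity argument is a verbatim adaptation of the proof of Corollary \ref{corollary:finiteconvexalpha}, with $R_\alpha$ replaced by all of $\left(\mathbb{R}^{\vert\mathcal{W}\vert}\right)^2$, i.e.\ dropping the constraints on the first and last coordinates of $\mathbf{y}$.
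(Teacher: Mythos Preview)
Your proposal is correct and essentially the same as the paper's proof: the paper sets $\underline{\ell}_{\textrm{core}}(\mathbf{x})=\inf_{\mathbf{y}\in(\mathbb{R}^{\vert\mathcal{W}\vert})^2}\underline{\ell}(\mathbf{x},\mathbf{y})$ directly (rather than factoring through $\underline{\ell}_\alpha$ and then minimizing over $y^\pm$, which amounts to the same thing), picks minimizers $\mathbf{y},\mathbf{y}'$ at $\mathbf{x},\mathbf{x}'$, and runs exactly the chain of inequalities you wrote. Your concern about attainment of the infimum is legitimate but is not addressed in the paper either; it simply asserts the existence of minimizers.
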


The proof is the same as the previous one, once we have set
$$
\underline{\ell}_{\textrm{core}}(\mathbf{x}):=
\inf_{\mathbf{y}\in\left(\mathbb{R}^{\vert\mathcal{W}\vert}\right)^2}\underline{\ell}(\mathbf{x},\mathbf{y}).
$$

\begin{proof}
Let  $\mathbf{x}$ and $\mathbf{x}'$ be two different points of $\mathbb{R}^{\vert\check{\mu}\vert-2}$.
Let $\mathbf{y}$ and $\mathbf{y}'$ be two points of $\left(\mathbb{R}^{\vert\mathcal{W}\vert}\right)^2$ such that 
$$
\underline{\ell}_{core}(\mathbf{x})=\underline{\ell}(\mathbf{x},{\bf y})\ \textrm{and}\ \underline{\ell}_{core}(\mathbf{x}')=\underline{\ell}(\mathbf{x}',{\bf y}').
$$
Let $t$ be a real number in $[0;1]$.
We have
\begin{eqnarray*}
\underline{\ell}_{\textrm{core}}((1-t)\mathbf{x}+t\mathbf{x}')&\leq&\underline{\ell}((1-t)\mathbf{x}+t\mathbf{x}',(1-t)\mathbf{y}+t\mathbf{y}')\quad\textrm{(as $\underline{\ell}_{\textrm{core}}$ is the infimum)}\\
&\leq&\underline{\ell}((1-t)(\mathbf{x},\mathbf{y})+t(\mathbf{x}',\mathbf{y}'))\\
&<& (1-t)\underline{\ell}(\mathbf{x},\mathbf{y})+t\underline{\ell}(\mathbf{x}',\mathbf{y}')\quad\textrm{(strict convexity of $\underline{\ell}$)}\\
&<&(1-t)\underline{\ell}_{\textrm{core}}(\mathbf{x})+t\underline{\ell}_{\textrm{core}}(\mathbf{x}')\quad\textrm{(definition of $\mathbf{y},\mathbf{y}'$)}.
\end{eqnarray*}
\end{proof}

\subsection{Convexity of length function over the Teichm\"uller space of a strip equipped with an infinite wedge cut}
\label{subsection:infiniteconvexity}

\subsubsection*{{\bf \S 1.}}
We now deal with the case where the wedge cut $\check{\mu}$ of $S^0$ has infinitely many leaves and therefore the cardinality of $\mathcal{W}$ is infinite.
The leaves that bound wedges of $\mathcal{W}$ are called the {\bf frontier leaves} of $\check{\mu}$ and their set is denoted by $\check{\mu}_{f}$; it is countable and it is dense in $\check{\mu}$.
Each wedge yields exactly two frontier leaves while one frontier leaf bounds either one or two wedges.

For reasons connected with our dealing later on with surfaces, we shall assume that $\check{\mu}$ has zero area (or, more precisely, that the intersection between the core of the strip and $\check{\mu}$ is a Cantor set).

Let $A^{\pm}$ be as before the distinguished points on the boundary of a strip $S^0$ equipped with a wedge cut $\check{\mu}$.
As for the finite case, we consider curves joining $\partial S^0$ which are piecewise geodesic with respect to $\check{\mu}$.
To be precise, a curve $\beta$ is \textbf{$\check{\mu}$-piecewise geodesic} if every wedge component (exactly one per wedge) of $\beta\setminus\beta\cap\check{\mu}$ and every leaf component (exactly one per \emph{inner frontier} leaf, possibly reduced to a point) of $\beta\cap\check{\mu}_{f}$ is geodesic.
We futhermore require the length of $\beta$ to be finite: $\ell_{\beta}(S^0)<\infty$.
%Such a curve can be rather complicated with uncountably many singular points, one for each leaf of $\check{\mu}$ (as a paradigm, think of a Cantor ``staircase", whose length is nevertheless finite).
Geodesic segments $\alpha$ connecting sides of $\partial S^0$ belong to this family of curves.
We denote by $\mathcal{C}_{\check{\mu}}(S^0)$ the set of all $\check{\mu}$-piecewise geodesic curves and by $\mathcal{C}^{\alpha}_{\check{\mu}}(S^0)$ the subset of all $\check{\mu}$-piecewise geodesic curves with the same endpoints as a fixed $\alpha$.

In the finite case, $\mathcal{C}_{\check{\mu}}(S^0)$ was parameterized by finitely many real numbers encoding the position of the wedge segment endpoints.   
In the infinite case, there are countably many such endpoints, all lying on frontier leaves, two per wedge.
We consider the map $\tau_{S^0}\,:\,\mathcal{C}_{\check{\mu}}(S^0)\to\mathbb{R}^{\check{\mu}_{f}}\times\mathbb{R}^{\check{\mu}_{f}}=\left(\mathbb{R}^{\check{\mu}_{f}}\right)^2$ that associates to any $\check{\mu}$-piecewise geodesic curve $\beta$ the position of its wedge segment endpoints.
We show now that this map parameterizes the set $\mathcal{C}_{\check{\mu}}(S^0)$ conveniently.

\begin{lemma}
\label{lemma:tau_injective}
The map $\tau_{S^0}$ is injective.
\end{lemma}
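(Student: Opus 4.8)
The plan is to show that every $\beta\in\mathcal{C}_{\check{\mu}}(S^0)$ can be reconstructed from the collection $\tau_{S^0}(\beta)\in\left(\mathbb{R}^{\check{\mu}_{f}}\right)^2$ of positions of its wedge-segment endpoints. So I would take $\beta,\beta'\in\mathcal{C}_{\check{\mu}}(S^0)$ with $\tau_{S^0}(\beta)=\tau_{S^0}(\beta')$ and prove $\beta=\beta'$ by matching them piece by piece. The first, easy step: for every wedge $w\in\mathcal{W}$ the intersection $\beta\cap w$ is, by definition of a $\check{\mu}$-piecewise geodesic curve, a single geodesic segment of $\mathbb{H}^{2}$, and its two endpoints are precisely the two coordinates of $\tau_{S^0}(\beta)$ sitting on the two frontier leaves bounding $w$. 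Since a geodesic segment is determined by its endpoints, $\beta\cap w=\beta'\cap w$ for every wedge.

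Next I would handle the leaf segments, which is where the real work lies. For an inner frontier leaf $\ell\in\check{\mu}_{f}$, the unique component of $\beta\cap\check{\mu}_{f}$ carried by $\ell$ is a geodesic arc contained in the geodesic $\ell$ (possibly a point), hence again determined by its endpoints, so it suffices to check that these endpoints depend only on $\tau_{S^0}(\beta)$. On a side of $\ell$ along which $\ell$ bounds a wedge $w$, connectedness of $\beta$ — together with the fact that there is only \emph{one} component of $\beta\cap\check{\mu}_{f}$ on $\ell$ — forces the corresponding endpoint of this leaf segment to coincide with the endpoint on $\ell$ of the wedge segment $\beta\cap w$, which is a recorded coordinate. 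On a side of $\ell$ along which $\ell$ bounds no wedge, $\ell$ is approached by an infinite sequence of wedges $w_{n}\in\mathcal{W}$; since the wedge segments are pairwise disjoint and $\ell_{\beta}(S^0)<\infty$, their lengths along any such sequence tend to $0$, so the relevant endpoint of the leaf segment is the common limit of the (recorded) endpoints of the $\beta\cap w_{n}$, hence still a function of $\tau_{S^0}(\beta)$ alone. Therefore $\beta\cap\ell=\beta'\cap\ell$ on every inner frontier leaf.

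Finally I would close with a density argument: because $\check{\mu}$ has zero area — its intersection with the core is a Cantor set — and $\beta,\beta'$ are continuous curves of finite length, $\beta$ equals the closure of $\bigcup_{w\in\mathcal{W}}(\beta\cap w)\cup\bigcup_{\ell\in\check{\mu}_{f}}(\beta\cap\ell)$; the only points of $\beta$ not in this union lie on non-frontier (``dust'') leaves and, by connectedness, are limits of pieces already controlled on both sides. The same holds for $\beta'$, and since the two unions agree term by term, $\beta=\beta'$. The step I expect to be the main obstacle is the second one — a frontier leaf bounding only one wedge, or none, on a given side. This is exactly where the hypothesis $\ell_{\beta}(S^0)<\infty$ is indispensable: it prevents $\beta$ from oscillating through the infinitely many wedges accumulating onto $\ell$, and some care is needed to show that the relevant limit of recorded endpoints exists and genuinely coincides with the leaf-segment endpoint, using that $\beta$ is a bona fide connected rectifiable curve rather than an unstructured union of geodesic pieces.
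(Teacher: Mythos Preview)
Your proposal is correct and follows essentially the same route as the paper's proof: match the wedge segments immediately from the recorded endpoints, then recover each leaf-segment endpoint either directly (when a wedge abuts) or as a limit of recorded wedge-segment endpoints (when wedges accumulate), using the finite-length hypothesis to control the approximation. The paper organizes the second step by distinguishing \emph{isolated} versus \emph{non-isolated} frontier leaves rather than your side-by-side analysis, and it does not spell out the final closure step for non-frontier leaves as explicitly as you do, but the substance is the same. One small sharpening: in the accumulation case, what you actually need is that the \emph{tail} of $\beta$ between $\ell$ and $w_n$ has length tending to zero (so that the recorded endpoints genuinely converge to the leaf-segment endpoint by continuity of $\beta$); this follows from $\ell_\beta(S^0)<\infty$ just as you indicate, but it is the total tail length rather than merely the individual wedge-segment lengths that does the work.
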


\begin{proof}
Let $\beta$ and $\beta'$ be two $\check{\mu}$-piecewise geodesic curves such that $\tau_{S^0}(\beta)=\tau_{S^0}(\beta')$.
This readily implies that $\beta$ and $\beta'$ coincide outside $\check{\mu}$.

Consider a leaf segment $s$ of $\beta$.
Suppose that the leaf containing $s$ is isolated, that is, squeezed between two wedges.
Then its endpoints are endpoints of wedge segments of $\beta$, hence of $\beta'$.
Therefore, $s$ is also a leaf segment of $\beta'$ and $\beta$ and $\beta'$ coincide also over the isolated leaves of $\check{\mu}$.

Let now $s$ be a leaf segment of $\beta$ contained in a non-isolated leaf of $\check{\mu}$.
At least one endpoint $p$ of $s$ is not an endpoint of some wedge segment.
The frontier leaves being dense in $\check{\mu}$ and by continuity of $\beta$, there exists a sequence of points $(p_{n})$, each one contained in a leaf segment $s_{n}$ of $\beta$ which converges to $p$.
Since the length of $\beta$ is finite and bounded below by the sum of the lengths of the leaf segments, the lengths $\ell_{s_{n}}(S^0)$ converge to zero.
We can therefore assume that each point $p_{n}$ is an endpoint of $s_{n}$ which we can choose furthermore to be an endpoint of some wedge segment.
Consequently each $p_{n}$ belongs to $\beta'$.
Passing to the limit, we conclude that $p\in\beta'$.
This shows that $s\subset\beta'$.
By symmetry, this proves that $s$ is a leaf segment of $\beta'$.
We thus have shown that $\beta$ and $\beta'$ coincide over $\check{\mu}$ as well.
Hence $\beta=\beta'$, which settles the injectivity of the map $\tau_{S^0}$.
\end{proof}

\subsubsection*{{\bf \S 2.}}

We equip $\left(\mathbb{R}^{\mathcal{W}}\right)^2$ with the topology arising from the distance $\Vert\mathbf{y}'-\mathbf{y}\Vert=\sum_{i\in I}\vert y'^{\pm}_{i}-y_{i}^{\pm}\vert$.

For each geodesic segment $\alpha$ joining the opposite sides of $\partial S^0$, set $\mathbf{y}^0=\tau_{S^0}(\alpha)$ and consider the open set $B({\bf y}^0)=\left\{\mathbf{y}\in\left(\mathbb{R}^{\mathcal{W}}\right)^2\,:\,\Vert \mathbf{y}-\mathbf{y}^0\Vert<\infty \right\}$.
Since $B({\bf y}^0)$ is the increasing union of balls centered in ${\bf y}^0$ and since these balls are each convex subsets of $\left(\mathbb{R}^{\mathcal{W}}\right)^2$ (see \cite{Papad13} Proposition 5.3.14), we conclude that the set $B({\bf y}^0)$ is convex as well.
We define $\textrm{C}_{\check{\mu}}^0(S^0)$ to be the convex hull of $\bigcup_{{\bf y}^0}B({\bf y}^0)$.

We want to show that any point ${\bf y}$ in $\textrm{C}_{\check{\mu}}^0(S^0)$ defines a $\check{\mu}$-piecewise geodesic curve of finite length joining opposite sides of $S^0$.
We make use of finite approximations.
So let $I_n\subset I$ be a finite approximation of $\check{\mu}$ as described in Section \ref{section:developing} and let $(S^{0}_{n},\check{\mu}_{n})$ be a sequence of geometric realizations of $(S^{0},\check{\mu})$.
Every $\check{\mu}$-piecewise geodesic curve $\beta$ in $S^0$ induces a $\check{\mu}_n$-piecewise geodesic curve $\beta_n$ in $S^0_n$ by keeping the wedge segments of $\beta$ indexed by $I_n\subset I$ and by connecting them with leaf segments contained in $\check{\mu}_n$.
We say that $\beta_{n}$ is the finite approximation of $\beta$ induced by $(S^{0}_{n},\check{\mu}_{n})$.
Recall that $\mathcal{W}_n$, respectively $\mathcal{W}$, denotes the set of wedges of $\check{\mu}_n$, respectively of $\check{\mu}$.

\begin{lemma}
\label{lemma:convergence_pwgeodesic}
For each $\mathbf{y}\in\textrm{C}_{\check{\mu}}^0(S^0)$ there exists a unique $\check{\mu}$-piecewise geodesic $\beta\in\mathcal{C}_{\check{\mu}}(S^0)$ such that $\tau_{S^0}(\beta)=\mathbf{y}$.
Furthermore, if $\beta_n$ denotes the finite approximation of $\beta$ induced by some geometric realization $(S^{0}_{n},\check{\mu}_{n})$ converging to $(S^{0},\check{\mu})$, then $\ell_{\beta_n}$ converges to $\ell_{\beta}$.
More precisely, if $\beta^l, \beta^w$ respectively denote the leaf part and the wedge part of $\beta$ and likewise if $\beta_{n}^l, \beta_{n}^w$ denote the leaf part and the wedge part of $\beta_n$, then
$$
\lim_{n\to\infty}\ell_{\beta_n^l}=\ell_{\beta^l}\ \textrm{and}\ \lim_{n\to\infty}\ell_{\beta_n^w}=\ell_{\beta^w}.
$$
\end{lemma}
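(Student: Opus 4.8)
The plan is to reduce the statement to the finite case of Section~\ref{subsection:finiteconvexity} together with Thurston's approximation procedure recalled in Section~\ref{section:developing}. Uniqueness of $\beta$ will be immediate from Lemma~\ref{lemma:tau_injective} (the constructed $\beta$ has finite length, hence lies in $\mathcal{C}_{\check{\mu}}(S^0)$), so the real content is: (a) producing a $\check{\mu}$-piecewise geodesic $\beta$ with $\tau_{S^0}(\beta)=\mathbf{y}$ and $\ell_{\beta}(S^0)<\infty$, and (b) controlling the wedge part and the leaf part of $\ell_{\beta_n}$ separately as $n\to\infty$.

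For (a) I would first build $\beta$ wedge by wedge: in each wedge $w_i$ take the geodesic wedge segment whose endpoints sit at signed positions $y_i^{-},y_i^{+}$ from the two distinguished points of $w_i$, and on each inner frontier leaf take the geodesic leaf segment joining the two adjacent wedge-segment endpoints (a point if the leaf bounds only one wedge); on $\check{\mu}\setminus\check{\mu}_f$ the curve meets the leaves in a set of zero length because of the standing assumption that $\check{\mu}$ has zero area, so that $\ell_{\beta}(S^0)=\ell_{\beta^w}(S^0)+\ell_{\beta^l}(S^0)$. The one point needing care is that this is finite. I would introduce the (possibly infinite) ``formal length''
$$
\Phi(\mathbf{y})=\sum_{i\in I}\ell_{w_i}(y_i^{-},y_i^{+})+\sum_{l}|s_l(\mathbf{y})|,
$$
where $\ell_{w_i}$ is the length of the wedge geodesic, a strictly convex function of $(y_i^{-},y_i^{+})$ by Proposition~\ref{proposition:d_convex}, and $s_l(\mathbf{y})=y_i^{+}-\sigma_l-y_j^{-}$ is the signed leaf-segment length across a frontier leaf $l$ separating $w_i$ and $w_j$ (with $\sigma_l$ the shear), an affine function of $\mathbf{y}$ — for a non-isolated frontier leaf the term is $|y_i^{+}-p_l(\mathbf{y})|$ with $p_l$ affine, obtained as a limit. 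Then $\Phi\colon(\mathbb{R}^{\check{\mu}_f})^2\to[0,\infty]$ is convex. For $\mathbf{y}^0=\tau_{S^0}(\alpha)$ the geodesic $\alpha$ has all leaf segments reduced to points, so $s_l(\mathbf{y}^0)=0$ and $\ell_{w_i}((y^0)_i^{\pm})=\ell(\alpha\cap w_i)$; comparing endpoint positions gives $\Phi(\mathbf{y})\le\ell_{\alpha}(S^0)+c\,\Vert\mathbf{y}-\mathbf{y}^0\Vert$ on $B(\mathbf{y}^0)$, so $\Phi$ is finite on each $B(\mathbf{y}^0)$ and hence, being convex, finite on their convex hull $\textrm{C}_{\check{\mu}}^0(S^0)$. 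This gives $\ell_{\beta}(S^0)=\Phi(\mathbf{y})<\infty$; finiteness of the length also guarantees, exactly as in the proof of Lemma~\ref{lemma:tau_injective}, that the pieces glue into a connected rectifiable curve across the accumulation points of $\check{\mu}$. (Alternatively $\beta$ can be obtained as a limit of finite approximations built from $\mathbf{y}|_{I_n}$, whose lengths are uniformly bounded by the finite-case comparison estimate.)

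For the wedge part of (b): the $i$-th wedge segment of $\beta_n$ ($i\in I_n$) is intrinsic to $w_i$ and is the same as that of $\beta$, so $\ell_{\beta_n^w}=\sum_{i\in I_n}\ell(\beta\cap w_i)$ is non-decreasing and, since $\ell_{\beta^w}<\infty$, converges by monotone convergence to $\sum_{i\in I}\ell(\beta\cap w_i)=\ell_{\beta^w}$. The leaf part is where the work lies. For consecutive indices $i,j$ of $I_n$, the leaf segment of $\beta_n$ over the corresponding leaf of $\check{\mu}_n$ has length $|y_i^{+}-\sigma(w_i,w_j)-y_j^{-}|$, where $\sigma(w_i,w_j)$ is the \emph{shear of $S^0$} between $w_i$ and $w_j$ (this is how the geometric realization $S^0_n$ is glued, Section~\ref{section:developing}). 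Applying Lemma~\ref{lemma:separation} along the chain of wedges of $\check{\mu}$ lying in the gap between $w_i$ and $w_j$, one rewrites $\sigma(w_i,w_j)$ as the sum of the signed leaf-segment lengths of $\beta$ in that gap plus $\sum_k(y_k^{+}-y_k^{-})$ over the intermediate wedges $w_k$; these sums converge absolutely because $\ell_{\beta^l}<\infty$ and because $\sum_k|y_k^{+}-y_k^{-}|\le\ell_{\beta^w}<\infty$, the latter being an explicit $\mathbb{H}^2$-computation saying that a geodesic crossing a wedge drifts between the two sides by strictly less than its own length (this uses that the two distinguished points of a wedge are interchanged by the reflection symmetry of the wedge). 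Summing over the gaps of $I_n$ yields
$$
\ell_{\beta_n^l}\ \le\ \ell_{\beta^l}+\sum_{k\in I\setminus I_n}|y_k^{+}-y_k^{-}|,
$$
and the tail tends to $0$, so $\limsup_n\ell_{\beta_n^l}\le\ell_{\beta^l}$.

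To finish I would use lower semicontinuity of length: $\beta_n\to\beta$ uniformly — from Thurston's convergence of the geometric realizations $(S^0_n,\check{\mu}_n)\to(S^0,\check{\mu})$ recalled in Section~\ref{section:developing}, together with the fact that the portions of $\beta$ omitted by $\beta_n$ have length going to $0$ — hence $\ell_{\beta}\le\liminf_n\ell_{\beta_n}=\liminf_n(\ell_{\beta_n^w}+\ell_{\beta_n^l})$; combined with $\ell_{\beta_n^w}\to\ell_{\beta^w}$ this gives $\ell_{\beta^l}\le\liminf_n\ell_{\beta_n^l}$, so $\ell_{\beta_n^l}\to\ell_{\beta^l}$ and consequently $\ell_{\beta_n}\to\ell_{\beta}$. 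The main obstacle I expect is the convergence of the leaf part: it requires organizing the infinitely many shear contributions inside a gap into an absolutely convergent telescoping sum — which hinges on the two finiteness estimates above — and the wedge-drift estimate $|y_k^{+}-y_k^{-}|\le\ell(\beta\cap w_k)$, which is exactly where the hyperbolic geometry of the wedge and the symmetric placement of its distinguished points enter.
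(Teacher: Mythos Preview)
Your route is genuinely different from the paper's. The paper does not build $\beta$ first: it bounds $\ell_{\beta_n}$ by comparing with the finite approximations $\alpha_n$ of a geodesic segment, extracts a uniformly convergent subsequence via an Arzel\`a--Ascoli type statement (Proposition~1.4.11 of \cite{Papad13}), and only then identifies the limit as a $\check{\mu}$-piecewise geodesic curve; the separate convergence of wedge and leaf parts is read off from the convergence of the geometric realizations. Your idea of packaging everything into a convex ``formal length'' $\Phi$ and deducing finiteness on the convex hull from finiteness on each $B(\mathbf{y}^0)$ is clean and explains conceptually why $\textrm{C}_{\check{\mu}}^0(S^0)$ is the right domain; your telescoping bound for $\ell_{\beta_n^l}$ via Lemma~\ref{lemma:separation} and the wedge-drift inequality $|y_k^{+}-y_k^{-}|\le\ell(\beta\cap w_k)$ is a nice explicit estimate the paper does not isolate.

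There is, however, a real gap in your construction of $\beta$ and of $\Phi$, precisely at the non-isolated frontier leaves. You write that on a frontier leaf bounding only one wedge the leaf segment is ``a point''. This is false in general: perturb a single coordinate $y_i^{+}$ of $\mathbf{y}^0=\tau_{S^0}(\alpha)$ while keeping all others fixed; then the wedge segments on the accumulated side of that leaf are unchanged and still limit to the old intersection point of $\alpha$, so $\beta$ must contain a nontrivial leaf segment there of length $|y_i^{+}-(y^0)_i^{+}|$. Under the paper's standing hypothesis that the core meets $\check{\mu}$ in a Cantor set, \emph{every} frontier leaf bounds exactly one wedge (a Cantor set has no isolated points, so no two complementary gaps share an endpoint), hence your construction as written produces no leaf segments at all and the resulting ``curve'' is not connected. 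The later formula $|y_i^{+}-p_l(\mathbf{y})|$ is closer to the truth, but $p_l$ is a limit over the wedges accumulating on $l$ and you have not shown it exists before you know $\beta$ exists; defining $\Phi$ already presupposes this. This circularity is exactly what the paper's compactness argument sidesteps, and it is essentially what your own parenthetical alternative (build $\beta$ as a limit of the $\beta_n$, whose lengths are uniformly bounded by the finite-case comparison) would give you --- but that alternative \emph{is} the paper's proof. If you want to keep your direct construction, you must first prove, for each non-isolated frontier leaf $l$ bounding $w_i$, that $\lim_{w_k\to l}\big(y_k^{-}+\sigma(w_i,w_k)\big)$ exists for every $\mathbf{y}\in\textrm{C}_{\check{\mu}}^0(S^0)$, and that the resulting $p_l$ is affine with the right continuity to make $\Phi$ lower semicontinuous; only then do your finiteness and telescoping arguments go through.
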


\begin{proof}
Pick a finite approximation $(I_n)$ of $I$ and choose the finite geometric realizations $(S^0_{n},\check{\mu}_{n})$ of $\check{\mu}$ so that they converge to $(S^0,\check{\mu})$.
For each $n$, let $\beta_{n}$ be the finite approximation of $\beta$ induced by $(S^0_{n},\check{\mu}_{n})$;
it is a $\check{\mu}_n$-piecewise geodesic curve.

Consider the length $\ell_{\beta_{n}}$ of each $\check{\mu}_n$-piecewise geodesic curve $\beta_{n}$.
We first show that the sequence $(\ell_{\beta_{n}})_n$ is bounded.

As a first case, we assume that there exists a geodesic segment $\alpha$ with $\mathbf{y}^0=\tau_{S^0}(\alpha)$ such that ${\bf y}$ belongs to the ball $B({\bf y}^0)$ centered at ${\bf y}^0$.
The curve $\alpha$ connects the points located by $y^{-}$ and $y^{+}$ on $\partial S^0$.
Set $\mathbf{y}^0=(y^{0\pm}_{i})_{i\in I}$.
We denote by $\alpha_{n}$ the $\check{\mu}_n$-piecewise geodesic curve which is the finite approximation of $\alpha$; 
for each $n$ the curve $\alpha_n$ is parameterized by the numbers ${\bf y}_{n}^{0}\in\left(\mathbb{R}^{\check{\mu}_{n}}\right)^2$.

We first consider the sum $\ell_{\beta^{w}_{n}}$ of the lengths of wedge segments of $\beta_n$.
Let $W\subset\mathbb{H}^2$ be the wedge in $(S^0_{n},\check{\mu}_{n})$ indexed by $i\in I_n\subset I$.
Let $w_n$ be the wedge segment of $\beta_{n}$ in $W$ and let $w^0_n$ be the wedge segment of $\alpha_n$ in $W$.
Triangle inequality gives
$$
\ell_w\leq\ell_{w^0}+\vert y_{i}^{+}-y_{i}^{0,+}\vert+\vert y_{i}^{-}-y_{i}^{0,-}\vert.
$$
Hence, for each $n$, the sum $\ell_{\beta^{w}_{n}}$ of the lengths of wedge segments of $\beta_{n}$ satisfies
\begin{eqnarray*}
\ell_{\beta^{w}_{n}}=\sum_{w\in \mathcal{W}_n}\ell_{w}&\leq&\sum_{W\in\mathcal{W}_n}\ell_{w^0}+\sum_{i\in I^0_n}\vert y_{i}^{+}-y_{i}^{0,+}\vert+\vert y_{i}^{-}-y_{i}^{0,-}\vert\\
&\leq&\sum_{i\in I_n}\vert y_{i}^{+}-y_{i}^{0,+}\vert+\vert y_{i}^{-}-y_{i}^{0,-}\vert+\ell_{\alpha_{n}^{w}}\\
&\leq&\ell_{\alpha_{n}^{w}}+\Vert\mathbf{y}_n-\mathbf{y}^{0}_n\Vert\\
&\leq&\ell_{\alpha_{n}^{w}}+\Vert\mathbf{y}-\mathbf{y}^{0}\Vert.
\end{eqnarray*}

Let $l$ be a leaf segment of $\beta_n$ which is on the leaf of $\check{\mu}_n$ separating the $i$-th and the $i+1$-th wedge.
Thus
 $$
\ell_l=\vert y_{i}^{+}-y_{i+1}^{-}\vert.
$$
For each $n$, the sum $\ell_{\beta^{l}_{n}}$ of the lengths of leaf segments of $\beta_{n}$ satisfies
\begin{eqnarray*}
\ell_{\beta^{l}_{n}}=\sum_{i\in I_n}\vert y_{i}^{+}-y_{i+1}^{-}\vert&\leq&\sum_{i\in I_n}\vert y_{i}^{+}-y_{i}^{0,+}\vert+\vert y_{i}^{0,+}-y_{i+1}^{0,-}\vert+\vert y_{i+1}^{-}-y_{i+1}^{0,-}\vert\\
&\leq&\sum_{i\in I_n}\vert y_{i}^{+}-y_{i}^{0,+}\vert+\vert y_{i}^{-}-y_{i}^{0,-}\vert+\ell_{\alpha_{n}^{l}}\\
&\leq&\Vert\mathbf{y}_n-\mathbf{y}^{0}_n\Vert+\ell_{\alpha_{n}^{l}}\\
&\leq&\Vert\mathbf{y}-\mathbf{y}^{0}\Vert+\ell_{\alpha_{n}^{l}}.
\end{eqnarray*}

Summing both estimates, we get for each $n$
$$
\ell_{\beta_{n}}=\ell_{\beta^{l}_{n}}+\ell_{\beta^{w}_{n}}\leq\ell_{\alpha_{n}}+2\Vert\mathbf{y}-\mathbf{y}^{0}\Vert.
$$
We just have to prove that the sequence $(\ell_{\alpha_{n}})_n$ is bounded.
Now the geometric realization $(S^0_n,\check{\mu}_{n})$ is obtained, up to isometry, from $(S^{0},\check{\mu})$ by collapsing almost all wedges of $\check{\mu}$ along the leaves of the horocyclic foliation of $S^{0}$.
If one wedge is collapsed onto one geodesic, any wedge segment $w$ inside this wedge is then collapsed onto a geodesic segment $\underline{w}$ contained in that geodesic.
It is easy to check that the length of $\underline{w}$ is smaller to the length of the wedge segment $w$.
We thus conclude that, for all $n$, $\ell_{\alpha_n}<\ell_{\alpha_{n+1}}<\ell_{\alpha}$.
This shows that the sequence of lengths $(\ell_{\alpha_n})$ converges and that the limit is smaller or equal to $\ell_{\alpha}$.
In particular, the sequence $(\ell_{\alpha_{n}})_n$ is bounded and so is the sequence $(\ell_{\beta_{n}})_n$, which was our goal.

As a second case, let ${\bf y}$ be a arbitrary point in the convex set $\textrm{C}_{\check{\mu}}^0(S^0)$.
By Carath\'eodory theorem, $\textrm{C}_{\check{\mu}}^0(S^0)$ is the set of all barycenters of points in $\bigcup_{{\bf y}^0}B({\bf y}^0)$.
In particular, there exist two points ${\bf y}_1$ and ${\bf y}_2$ in $\bigcup_{{\bf y}^0}B({\bf y}^0)$ and a number $t\in[0,1]$ such that ${\bf y}=t{\bf y}_1+(1-t){\bf y}_2$.
For each $n$, consider the $\check{\mu}_{n}$-piecewise geodesic curves $\beta_{1,n},\,\beta_{2,n},\,\beta_{n}$ associated to ${\bf y}_1,\,{\bf y}_2,\,{\bf y}$ by keeping the indices $I_n$.
Because of the construction of finite approximations, for each $n$, the curve $\beta_n$ \emph{is} the linear combination $t\beta_{1,n}+(1-t)\beta_{2,n}$.
We therefore have the estimates
$$
\min\{\ell_{\beta_{1,n}},\ell_{\beta_{2,n}}\}\leq\ell_{\beta_n}\leq\max\{\ell_{\beta_{1,n}},\ell_{\beta_{2,n}}\}.
$$
The previous case showed that both $\ell_{\beta_{1,n}},\ell_{\beta_{2,n}}$ were bounded, so is $\ell_{\beta_n}$, as was to be shown.

Parameterizing the curves $\beta_n$ proportionnally to arclength so that they all have the same domain, say $[0;1]$, we now apply Proposition 1.4.11 of \cite{Papad13} which says that there exists a subsequence of $(\beta_{n})$ which converges uniformly to a curve $\beta$ with finite length (since $\ell_\beta\leq\ell_\alpha+2\Vert\mathbf{y}-\mathbf{y}^{0}\Vert$ in the first case treated above).

We now prove that the limit curve $\beta$ is $\check{\mu}$-piecewise geodesic.
Denote by $(p^{\pm}_{i,n})_{i\in I_n}$ the set of wedge segment endpoints of $\beta_n$.
For \emph{all} $n$, the points $p^{\pm}_{i,n}$ are located by the number $y^{\pm}_{i}$ with $i\in I_n\subset I$. 
For each $i\in I$ there exists a rank $N$ such that $i\in I_{n}$ for all $n\geq N$.
Because the geometric realizations $(S^{0}_{n},\check{\mu}_{n})$ converge to $(S^{0},\check{\mu})$, there exists for each $i\in I$ a point $p_{i}^{\pm}\in\check{\mu}$ in $\mathbb{H}^2$ which the sequence of points $(p_{i,n}^{\pm})$ converges to.
Consequently, all points $p_{i}^{\pm}$, $i\in I$, are points of $\beta$.
Since the restriction of every curve $\beta_n$ to the wedge indexed by $i$ consists in a single wedge segment $w_{i,n}$, namely the geodesic segment connecting the points $p_{i,n}^{\pm}$, we conclude that the intersection between $\beta$ and that wedge consists in the wedge segment $w_i$ connecting the points $p_{i}^{\pm}$.
Moreover, since the endpoints of all these wedge segments are located by the same numbers $y^{\pm}_{i}$, we have for all $n\geq N$ $\ell_{w_{i,n}}=\ell_{w_{i}}$.

Consider a frontier leaf $\gamma$ of $\check{\mu}$.
There exists a sequence of leaves $\gamma_n\subset\check{\mu}_n$ converging in the Hausdorff topology to that leaf.
Consider for each $n$ the leaf segment $l_n$ of $\beta_n$ contained in $\gamma_n$.
Let $p_{n}^{-}$ and $p_{n}^{+}$ be the endpoints of this leaf segment $l_n$.
The points $p_n^{\pm}$ are also wedge segment endpoints of $\beta_n$.
Because the sequence of curves $(\beta_n)$ converges uniformly, the points $(p_n^{\pm})$ are contained in a compact subset of $S^0$ and we can assume, up to taking a subsequence, that both sequences $(p_n^{\pm})$ converge; set $p^{-}$ and $p^{+}$ the limit points.
Both points $p^{\pm}$ belong to $\gamma$.
Because the sequence of curves $(\beta_n)$ converges uniformly, by compactness of these curves, we can conclude, up to taking a subsequence, that the points $p^{-}$ and $p^{+}$ belong to $\beta$.
Let $l$ denote the geodesic segment of $\gamma$ whose endpoints are $p^{-}$ and $p^{+}$.
The leaf segments $l_n$ being geodesic, we conclude that $(l_n)_n$ converges to $l$, that $\lim_{n\to\infty}\ell_{l_n}=\ell_l$ and that $l$ is contained in $\beta\cap\gamma$.
The segment $l$ is actually the whole intersection $\beta\cap\gamma$ since no other points of $\gamma$ is a limit point of the sequence $(\beta_n)$.

We thus have shown that the intersection between $\beta$ and any wedge of $\check{\mu}$ is a geodesic segment we call wedge segment of $\beta$. 
Likewise, the intersection between $\beta$ and any frontier leaf of $\check{\mu}$ is a geodesic segment we call leaf segment of $\beta$.
Furthermore, we have proven that if $\beta^{l}$ and $\beta^{w}$ denote the union of leaf segments and the union of wedge segments of $\beta$ respectively, we have 
$$
\lim_{n\to\infty}\ell_{\beta_n}=\ell_{\beta^l}+\ell_{\beta^w}\leq\ell_{\beta}.
$$
But by general principles (see \cite{Papad13} Corollary 1.4.5) we have $\ell_{\beta}\leq \lim_{n\to\infty}\ell_{\beta_n}$.
Therefore, 
$$
\ell_{\beta}=\lim_{n\to\infty}\ell_{\beta_n}.
$$
In particular, the intersection of $\beta$ with any leaf of $\check{\mu}$ which is not a frontier leaf is reduced to a point.
Consequently, the curve $\beta$ is $\check{\mu}$-piecewise geodesic, as was to be shown.
Thus $\beta\in\mathcal{C}_{\check{\mu}}(S^0)$ and we have $\tau_{S^0}(\beta)=\mathbf{y}$.
Lemma \ref{lemma:tau_injective} implies that this curve $\beta$ is uniquely defined and thus does not depend upon the chosen finite approximation used to define it.
\end{proof}

\subsubsection*{{\bf \S 3.}}
Now set $\textrm{T}_{\check{\mu}}^{0}(S^0):=\left\{\mathbf{x}\in\mathbb{R}^{\underline{\mathcal{W}}^2}\,:\,\Vert \mathbf{x}-\mathbf{x}^0\Vert<\infty \right\}$; this is a subset of $\mathbb{R}^{\underline{\mathcal{W}}^2}$ which is an increasing union of balls centered at $\mathbf{x}^0=\sigma_{\check{\mu}}(S^0)$.
We set $\mathcal{T}_{\check{\mu}}^{0}(S^0):=\sigma_{\check{\mu}}^{-1}\left(\textrm{T}_{\check{\mu}}^{0}(S^0)\right)$.
Let $\beta\in\mathcal{C}_{\check{\mu}}(S^0)$ be a $\check{\mu}$-piecewise geodesic curve in $S^0$ and let $S\in\mathcal{T}_{\check{\mu}}^{0}(S^0)$ be a hyperbolic structure close to $S^0$.
We want to define a way of ``recognizing" the curve $\beta$ in $S$ about the same way we did in the finite case.
We also proceed by approximation.

Let $(I_{n})_n$ be a finite approximation of $I$.
Let $(S^0_{n},\check{\mu}_{n})$ be a geometric realization of $I_{n}$ converging to $(S^0,\check{\mu})$.
Any curve $\beta\in\mathcal{C}_{\check{\mu}}(S^0)$ in $S^0$ induces a $\check{\mu}_n$-piecewise geodesic curve $\beta_{n}^{S^0_{n}}$ in $S^0_{n}$.
The process described in Section \ref{subsection:finiteconvexity} defines for each $n$ a $\check{\mu}_n$-piecewise geodesic curve $\beta_{n}^{S_{n}}$ in $S_{n}$, where $S_{n}$ is the strip $S^0_{n}$ sheared along $\check{\mu}_{n}$.
Since shears here are bounded, the very same proof as that of Lemma \ref{lemma:convergence_pwgeodesic} shows that $\ell_{\beta_{n}^{S_{n}}}\leq\ell_{\beta_{n}^{S^0_{n}}}+\Vert \mathbf{x}-\mathbf{x}^0\Vert$, hence $\ell_{\beta_{n}^{S_{n}}}\leq\ell_{\alpha}+\Vert \mathbf{x}-\mathbf{x}^0\Vert+2\Vert \mathbf{y}-\mathbf{y}^0\Vert$, where $\alpha$ is the geodesic segment with same endpoints as $\beta$.
Therefore, as this is explained in the proof of Lemma \ref{lemma:convergence_pwgeodesic}, there exists, up to taking a subsequence, a unique $\check{\mu}$-piecewise geodesic curve $\beta^{S}$ with finite length to which the sequence $(\beta_{n}^{S_{n}})$ converges uniformly.
Moreover, $\lim_{n\to\infty}\ell_{\beta_{n}^{S_{n}}}=\ell_{\beta^{S}}$ and, more precisely, the length of the wedge part, respectively the leaf part, of $\beta_n$ converges to the length of the wedge part, respectively the leaf part, of $\beta$.
We naturally identify the limit curve $\beta^{S}$ as the realization of $\beta$ in the strip $S$ (and we shall keep up with the notation $\beta$).

We can define the continuous bijective map
\begin{eqnarray*}
\tau\,:\,\mathcal{T}_{\check{\mu}}^{0}(S^0)\times\mathcal{C}_{\check{\mu}}^{0}(S^0)&\longrightarrow& \textrm{T}_{\check{\mu}}^{0}(S^0)\times\textrm{C}_{\check{\mu}}^{0}(S^0)\subset\mathbb{R}^{\underline{\mathcal{W}}^2}\times\left(\mathbb{R}^{\mathcal{W}}\right)^2\\
(S,\beta)&\mapsto& ({\bf x},{\bf y})
\end{eqnarray*}
such that $\tau(S^0,\cdot)=\tau_{S^0}(\cdot)$.

\subsubsection*{{\bf \S 4.}}
Length function
$$
\underline{\ell}\,:\,\textrm{T}_{\check{\mu}}^{0}(S^0)\times\textrm{C}_{\check{\mu}}^{0}(S^0)\longrightarrow\mathbb{R}_{+},
$$
is given by
$$
\underline{\ell}(\mathbf{x},\mathbf{y})=\ell_{\beta}(S),
$$
where $(S,\beta)=\tau^{-1}(\mathbf{x},\mathbf{y})$.
Note that it is finite over the neighbourhood $\textrm{T}_{\check{\mu}}^{0}(S^0)\times\textrm{C}_{\check{\mu}}^{0}(S^0)$ of $(\mathbf{x}^0,\mathbf{y}^0)=\tau(S^0,\alpha)$.

We now show the strict convexity of length function.

\begin{proposition}
\label{proposition:infiniteconvex}
Length function $\underline{\ell}$ is strictly convex over $\textrm{T}_{\check{\mu}}^{0}(S^0)\times\textrm{C}_{\check{\mu}}^{0}(S^0)$.
\end{proposition}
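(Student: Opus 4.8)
The plan is to imitate the finite-wedge-cut argument of Section~\ref{subsection:finiteconvexity}, using the finite approximations $(S^0_n,\check\mu_n)$ and the convergence properties established in the preceding paragraphs. Writing $(S,\beta)=\tau^{-1}(\mathbf{x},\mathbf{y})$, Lemma~\ref{lemma:convergence_pwgeodesic} together with its extension to sheared structures established above shows that $\ell_\beta(S)$ splits as the length of the wedge part of $\beta$ plus the length of its leaf part, each of these being the (convergent) sum of the lengths of the individual wedge, resp.\ leaf, segments, so that
\[
\underline{\ell}(\mathbf{x},\mathbf{y})=\sum_{w}\underline{\ell}_{w}(\mathbf{x},\mathbf{y})+\sum_{l}\underline{\ell}_{l}(\mathbf{x},\mathbf{y}),
\]
where $w$ ranges over the wedges of $\check\mu$ and $l$ over its inner frontier leaves, $\underline{\ell}_{w}$ is the length of the wedge segment of $\beta$ in $w$ and $\underline{\ell}_{l}$ the length of its leaf segment in $l$, both partial sums being finite on $\textrm{T}_{\check\mu}^{0}(S^0)\times\textrm{C}_{\check\mu}^{0}(S^0)$. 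Exactly as in the finite case, $\underline{\ell}_{w}$ is the distance function $d_{w}$ of Proposition~\ref{proposition:d_convex} precomposed with the affine map recording, in terms of $(\mathbf{x},\mathbf{y})$, the positions of the endpoints of that wedge segment inside $w$; hence each $\underline{\ell}_{w}$ is convex (and strictly convex in those two position variables), and each $\underline{\ell}_{l}$, being the absolute value of an affine function of the coordinates, is convex. Since $\textrm{T}_{\check\mu}^{0}(S^0)\times\textrm{C}_{\check\mu}^{0}(S^0)$ is convex (a product of the convex set $\textrm{T}_{\check\mu}^{0}(S^0)$ with the convex hull $\textrm{C}_{\check\mu}^{0}(S^0)$), the convergent sum $\underline{\ell}$ is convex. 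Equivalently, $\underline{\ell}=\lim_{n}\underline{\ell}_{n}\circ\pi_{n}$ pointwise, where $\pi_{n}$ is the linear restriction of coordinates to the wedges and leaves surviving in $\check\mu_{n}$ and $\underline{\ell}_{n}$ is the strictly convex finite-case length function of $\check\mu_n$; a pointwise limit of convex functions is convex.

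For strict convexity I would fix two distinct points $P=(\mathbf{x},\mathbf{y})$ and $P'=(\mathbf{x}',\mathbf{y}')$ of the domain, put $P_t=(1-t)P+tP'$ and $M=P_{1/2}$, and first prove an \emph{activation lemma}: some wedge $w_{i_0}$ has the property that the position vector of the endpoints of the wedge segment of $\beta_{t}$ inside $w_{i_0}$ is non-constant in $t$. Indeed, were this position vector constant along $[P,P']$ for \emph{every} wedge, then, reading off the affine formulas, the two endpoint positions of each wedge --- each a combination of that wedge's two position coordinates $y^{\pm}$ and of the two shears across its bounding frontier leaves --- would agree for $P$ and $P'$; the resulting telescoping relations among the differences of the adjacent shears, together with the fact that $P$ and $P'$ both satisfy $\Vert\mathbf{x}-\mathbf{x}^{0}\Vert<\infty$, force those adjacent shears to coincide (a nonzero $\ell^{1}$-perturbation of the adjacent shears whose telescoped partial sums do not all vanish would make $\Vert\cdot\Vert$ infinite), and then the coordinates $y^{\pm}$ must coincide as well, so $P=P'$. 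This is the infinite-dimensional counterpart of the linear-algebra step in the finite-case proof.

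With such a $w_{i_0}$ in hand I would pass to the limit while keeping a uniform estimate. The wedge $w_{i_0}$ is a fixed sub-wedge of $\mathbb{H}^{2}$; for $n$ large it is a genuine wedge of $\check\mu_n$, and the term $\underline{\ell}_{n,w_{i_0}}\circ\pi_n$ of $\underline{\ell}_n$ equals $d_{w_{i_0}}$ composed with an affine path $\gamma_n\colon[0,1]\to\mathbb{R}^{2}$ converging, as $n\to\infty$, to the affine path $\gamma$ furnished by the activation lemma --- and $\gamma$ is non-constant. Hence for $n$ large the velocity of $\gamma_n$ is bounded away from $0$, so, $d_{w_{i_0}}$ being smooth and strictly convex with Hessian bounded below on the relevant compact set of positions, the convexity defect $\frac12\big(\underline{\ell}_{n,w_{i_0}}(\pi_nP)+\underline{\ell}_{n,w_{i_0}}(\pi_nP')\big)-\underline{\ell}_{n,w_{i_0}}(\pi_nM)$ of this single term is $\geq c>0$ with $c$ independent of $n$. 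Since every other term of $\underline{\ell}_n$ is convex by the finite-case decomposition, the convexity defect of $\underline{\ell}_n\circ\pi_n$ at $M$ is also $\geq c$; letting $n\to\infty$ gives $\frac12(\underline{\ell}(P)+\underline{\ell}(P'))-\underline{\ell}(M)\geq c>0$. As $P\neq P'$ were arbitrary, $\underline{\ell}$ is strictly convex.

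I expect the activation lemma to be the genuine obstacle: it is the one step with no finite-dimensional shortcut, since in finite dimensions there are ``gauge'' directions of coordinate space along which every wedge term is constant, and it is precisely the finiteness conditions built into $\textrm{T}_{\check\mu}^{0}(S^0)\times\textrm{C}_{\check\mu}^{0}(S^0)$ that rule them out here. The subsequent passage to the limit is essentially routine, the only point of care being that one must not merely take a pointwise limit of the strictly convex functions $\underline{\ell}_n\circ\pi_n$ --- that would yield only convexity --- but carry along a uniform lower bound for their convexity defect, provided by the single fixed wedge $w_{i_0}$.
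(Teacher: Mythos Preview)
Your overall strategy coincides with the paper's: decompose $\underline{\ell}$ as a convergent series of wedge terms and leaf terms (using Lemma~\ref{lemma:convergence_pwgeodesic} and its sheared extension), note that each summand is convex, and for strict convexity invoke the existence of a wedge $w_{i_0}$ whose endpoint positions are non-constant along the segment $[P,P']$. The paper follows exactly this route; the only substantive difference is that the paper simply \emph{asserts} the activation lemma (``there is at least one $w_i$ whose endpoints do not remain fixed''), whereas you attempt to justify it via the $\ell^1$-finiteness built into $\textrm{T}_{\check{\mu}}^{0}(S^0)\times\textrm{C}_{\check{\mu}}^{0}(S^0)$. That justification is a genuine addition, and your observation that in the \emph{finite} model there do exist ``gauge'' directions along which every wedge term is constant is correct and shows why the point deserves attention.

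Your third paragraph, however, is more elaborate than necessary. Once you have written (as you do in your first paragraph) $\underline{\ell}=\sum_{w}\underline{\ell}_{w}+\sum_{l}\underline{\ell}_{l}$ as a \emph{convergent} series of convex functions, strict convexity along $[P,P']$ follows immediately from the strict convexity of the single term $\underline{\ell}_{w_{i_0}}$: for $t\in(0,1)$,
\[
\underline{\ell}(P_t)\;=\;\sum_{w}\underline{\ell}_{w}(P_t)+\sum_{l}\underline{\ell}_{l}(P_t)\;<\;(1-t)\,\underline{\ell}(P)+t\,\underline{\ell}(P'),
\]
since every summand satisfies the weak inequality and the one indexed by $w_{i_0}$ satisfies the strict one. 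This is precisely how the paper argues, bypassing any uniform-defect bookkeeping on the finite approximations. Your route through $\underline{\ell}_{n}\circ\pi_{n}$ and a Hessian lower bound is correct but redundant: the limiting has already been absorbed into the identity $\ell_{\beta^{w}}=\sum_{i\in I}\ell_{w_i}$, after which no further approximation is needed.
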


\begin{proof}
Let $(\mathbf{x},\mathbf{y})\in \textrm{T}_{\check{\mu}}^{0}(S^0)\times\textrm{T}_{\check{\mu}}^{0}(S^0)$ and $(S,\beta)=\tau^{-1}(\mathbf{x},\mathbf{y})$.
Let $(I_{n})$ be a finite approximation of $I$ and $(S_{n},\check{\mu}_{n})$ be a geometric realization of $I_{n}$ with respect to $(S,\check{\mu})$.
The curve $\beta$ in $S$ induces the sequence of $\check{\mu}_n$-piecewise geodesic curves $(\beta_{n})$ in $S_{n}$ which converges in $\mathbb{H}^2$ uniformly to $\beta$.
For every $n$ we decompose the length of $\beta_{n}$ into its wedge and leaf part:
$$
\ell_{\beta_{n}}=\ell_{\beta_{n}^{w}}+\ell_{\beta_{n}^{l}}.
$$
The length of the wedge part $\ell_{\beta_{n}^{w}}$ is the sum of the length of wedge segments $w_i$ ($i\in I_n$):
$$
\ell_{\beta_{n}^{w}}=\sum_{i\in I_{n}}\ell_{w_i}.
$$
Lemma \ref{lemma:convergence_pwgeodesic} implies that the wedge part of the limit curve $\beta$ satisfies
$$
\ell_{\beta^{w}}=\sum_{i\in I}\ell_{w_i}.
$$
Each length function $\ell_{w_i}$ seen as a function of $(\mathbf{x},\mathbf{y})$ is strictly convex along affine paths that do not keep the endpoints of $w_{i}$ fixed.
Since there is at least one $w_i$ whose endpoints do not remain fixed along such an affine path in $\textrm{T}_{\check{\mu}}^{0}(S^0)\times\textrm{C}_{\check{\mu}}^{0}(S^0)$, this shows that $\ell_{\beta^{w}}$ seen as a function of $(\mathbf{x},\mathbf{y})$ is \emph{strictly} convex. 

Similarly, for each $n$, $\ell_{\beta_{n}^{l}}$ is the sum of the leaf segment lengths: $\ell_{\beta_{n}^{l}}=\sum_{i\in I_n}\ell_{l_i^n}$.
As explained in the previous section, $\ell_{l_i^n}$ seen as a function of $(\mathbf{x},\mathbf{y})$ is a convex function for all $j$ and $n$.
Therefore, for each $n$, $\ell_{\beta_{n}^{l}}$ is a convex function.
Passing to the limit, we conclude that $\ell_{\beta^{l}}$ seen as a function of $(\mathbf{x},\mathbf{y})$ is convex.

The length of $\beta$ is the sum $\ell_{\beta_{n}^{w}}+\ell_{\beta_{n}^{l}}$.
We conclude that $\underline{\ell}$ is strictly convex.
\end{proof}

For each pair of numbers $(y^-,y^+)$, let $\alpha$ be the geodesic segment in $S^0$ connecting the points encoded by this pair.
Let $\textrm{C}_{\check{\mu}}^{0,\alpha}(S^0)$ be the subset of those ${\bf y}\in\textrm{C}_{\check{\mu}}^{0}(S^0)$ such that the curves $\beta=(\tau_{S^0})^{-1}({\bf y})$ and $\alpha$ have the same endpoints on $\partial S^0$.
These boundary conditions are expressed as linear combinations in the same way as in the finite case above.

Now note that the infima
$$
\underline{\ell}({\bf x},y^{-},y^{+}):=\inf_{{\bf y}\in\textrm{C}_{\check{\mu}}^{0,\alpha}(S^0)}\underline{\ell}({\bf x},{\bf y})
$$ 
and
$$
\underline{\ell}({\bf x}):=\inf_{{\bf y}\in\textrm{C}_{\check{\mu}}^{0}(S^0)}\underline{\ell}({\bf x},{\bf y})
$$ 
are actually attained.

At this point, the same proof as Lemma \ref{corollary:finiteconvexalpha} establishes that length function is strictly convex. 
Therefore we get the following results.

\begin{proposition}
\label{proposition:convex}
The length of the geodesic segment $\alpha$ is a strictly convex function in shear coordinates $\mathbf{x}$ and in signed distances of its endpoints.
\end{proposition}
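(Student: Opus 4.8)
The plan is to repeat verbatim the argument of Corollary~\ref{corollary:finiteconvexalpha}, now fed with the infinite-dimensional data assembled in this subsection. Proposition~\ref{proposition:infiniteconvex} supplies the strict convexity of the unconstrained length function $\underline{\ell}$ over $\textrm{T}_{\check{\mu}}^{0}(S^0)\times\textrm{C}_{\check{\mu}}^{0}(S^0)$, and it has just been observed that the infimum defining
$$
\underline{\ell}(\mathbf{x},y^{-},y^{+})=\inf_{\mathbf{y}\in\textrm{C}_{\check{\mu}}^{0,\alpha}(S^0)}\underline{\ell}(\mathbf{x},\mathbf{y})
$$
is attained. Those are the only two ingredients the finite-case proof used, so the argument should go through unchanged.

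First I would record two elementary facts about the domain. The set $\textrm{T}_{\check{\mu}}^{0}(S^0)$ consists of the points at finite $\ell^{1}$-distance from $\mathbf{x}^0$, hence is convex, and $\textrm{C}_{\check{\mu}}^{0}(S^0)$ is convex because it was defined as a convex hull; so $\textrm{T}_{\check{\mu}}^{0}(S^0)\times\textrm{C}_{\check{\mu}}^{0}(S^0)$ is convex and affine paths never leave it. Next, the constraints cutting $\textrm{C}_{\check{\mu}}^{0,\alpha}(S^0)$ out of $\textrm{C}_{\check{\mu}}^{0}(S^0)$ are affine in $(\mathbf{x},\mathbf{y},y^{\pm})$ — they fix the boundary data of $\mathbf{y}$ on $g^{\pm}$ in terms of $y^{\pm}$, exactly as in the finite case — so if $\mathbf{y}$ is admissible for the endpoint data $y^{\pm}$ and $\mathbf{y}'$ for $y'^{\pm}$, then $(1-t)\mathbf{y}+t\mathbf{y}'$ is admissible for the data $(1-t)y^{\pm}+ty'^{\pm}$.

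With this in hand the computation is purely formal. Pick distinct points $(\mathbf{x},y^{\pm})$ and $(\mathbf{x}',y'^{\pm})$, choose minimizers $\mathbf{y},\mathbf{y}'$ in the respective constraint sets realizing the two infima, and for $t\in[0;1]$ chain
$$
\begin{aligned}
\underline{\ell}\big((1-t)(\mathbf{x},y^{\pm})+t(\mathbf{x}',y'^{\pm})\big)
&\le\underline{\ell}\big((1-t)(\mathbf{x},\mathbf{y})+t(\mathbf{x}',\mathbf{y}')\big)\\
&<(1-t)\,\underline{\ell}(\mathbf{x},\mathbf{y})+t\,\underline{\ell}(\mathbf{x}',\mathbf{y}')\\
&=(1-t)\,\underline{\ell}(\mathbf{x},y^{\pm})+t\,\underline{\ell}(\mathbf{x}',y'^{\pm}),
\end{aligned}
$$
where the first inequality holds because $(1-t)\mathbf{y}+t\mathbf{y}'$ is an admissible competitor for the infimum at the intermediate point, the strict inequality is Proposition~\ref{proposition:infiniteconvex}, and the last equality is the choice of $\mathbf{y},\mathbf{y}'$. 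This is precisely strict convexity of $\underline{\ell}(\mathbf{x},y^{-},y^{+})$, which is the assertion of the proposition; running the same display with $\textrm{C}_{\check{\mu}}^{0}(S^0)$ in place of $\textrm{C}_{\check{\mu}}^{0,\alpha}(S^0)$ simultaneously yields strict convexity of $\underline{\ell}_{\textrm{core}}$ in $\mathbf{x}$ alone, as in Corollary~\ref{corollary:finiteconvexcore}.

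I do not expect a genuine obstacle at this stage: the real work was already done in Lemma~\ref{lemma:convergence_pwgeodesic} (finiteness and well-definedness of $\underline{\ell}$ near $(\mathbf{x}^0,\mathbf{y}^0)$, via a compactness/Hausdorff-convergence argument on piecewise-geodesic approximants) and in Proposition~\ref{proposition:infiniteconvex} (strict convexity in the unconstrained variables). The one point deserving a line of care is that the infimum is attained, so that the minimizers $\mathbf{y},\mathbf{y}'$ exist; this again follows from the compactness of the families of competing $\check{\mu}$-piecewise geodesic curves used throughout this subsection.
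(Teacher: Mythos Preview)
Your proposal is correct and is exactly the approach the paper takes: it explicitly says that ``the same proof as [Corollary~\ref{corollary:finiteconvexalpha}] establishes that length function is strictly convex,'' and then states Proposition~\ref{proposition:convex} without further argument. You have simply written out that reference in full, with the added (welcome) care of noting that the domain is convex, that the endpoint constraints are affine so convex combinations of competitors remain competitors, and that the infimum is attained.
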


and 

\begin{proposition}
\label{proposition:core_convex}
The length of the core of a strip is a strictly convex function in shear coordinates $\mathbf{x}$.
\end{proposition}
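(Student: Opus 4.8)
The plan is to deduce Proposition~\ref{proposition:core_convex} from Proposition~\ref{proposition:infiniteconvex} by the same partial-infimum argument that yielded Corollary~\ref{corollary:finiteconvexcore} in the finite-dimensional case; all the delicate analysis (recognizing a $\check{\mu}$-piecewise geodesic curve as the strip is sheared, bounding its length, passing to limits over finite approximations) has already been carried out in \S1--\S4 of Section~\ref{subsection:infiniteconvexity}.

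First I would observe that, for $\mathbf{x}\in\textrm{T}_{\check{\mu}}^{0}(S^0)$, the length of the core of the strip $S=\sigma_{\check{\mu}}^{-1}(\mathbf{x})$ is precisely the partial infimum $\underline{\ell}(\mathbf{x})=\inf_{\mathbf{y}\in\textrm{C}_{\check{\mu}}^{0}(S^0)}\underline{\ell}(\mathbf{x},\mathbf{y})$ already introduced above. Indeed, by definition of $\underline{\ell}$ the right-hand side is the infimum of $\ell_{\beta}(S)$ over all $\check{\mu}$-piecewise geodesic curves $\beta$ of finite length joining the two sides of $\partial S$ with free endpoints; the core is one of these curves (a geodesic segment is $\check{\mu}$-piecewise geodesic, and its position vector lies in $\textrm{C}_{\check{\mu}}^{0}(S^0)$), and no rectifiable curve joining the two boundary geodesics of $S$ is shorter than the core, so this infimum equals $\ell_{\textrm{core}}(S)$. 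As noted just before the statement, this infimum is moreover attained, say at $\mathbf{y}=\mathbf{y}(\mathbf{x})$.

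Next I would note that the domain $\textrm{T}_{\check{\mu}}^{0}(S^0)\times\textrm{C}_{\check{\mu}}^{0}(S^0)$ is convex, being a product of convex sets, so the affine segment joining two of its points stays inside it; this is exactly what permits applying Proposition~\ref{proposition:infiniteconvex} along such a segment. Then the proof is word for word that of Corollary~\ref{corollary:finiteconvexcore}: for distinct $\mathbf{x},\mathbf{x}'\in\textrm{T}_{\check{\mu}}^{0}(S^0)$ choose minimizers $\mathbf{y},\mathbf{y}'$ with $\underline{\ell}(\mathbf{x})=\underline{\ell}(\mathbf{x},\mathbf{y})$ and $\underline{\ell}(\mathbf{x}')=\underline{\ell}(\mathbf{x}',\mathbf{y}')$; for $t\in[0,1]$, bounding $\underline{\ell}((1-t)\mathbf{x}+t\mathbf{x}')$ from above by $\underline{\ell}$ evaluated at $(1-t)(\mathbf{x},\mathbf{y})+t(\mathbf{x}',\mathbf{y}')$, then applying the strict joint convexity of $\underline{\ell}$ (Proposition~\ref{proposition:infiniteconvex}) to the two distinct points $(\mathbf{x},\mathbf{y})$ and $(\mathbf{x}',\mathbf{y}')$ --- distinct because $\mathbf{x}\neq\mathbf{x}'$ --- and finally using the defining property of the minimizers, yields $\underline{\ell}((1-t)\mathbf{x}+t\mathbf{x}')<(1-t)\underline{\ell}(\mathbf{x})+t\underline{\ell}(\mathbf{x}')$. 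This is the claimed strict convexity.

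The only step that is not purely formal is the identification in the second paragraph: that restricting to the class $\mathcal{C}_{\check{\mu}}^{0}(S^0)$ of finite-length $\check{\mu}$-piecewise geodesic curves does not change the infimum defining the core length, and that the core of each sheared strip $S$ in the neighbourhood $\mathcal{T}_{\check{\mu}}^{0}(S^0)$ is realized by a point of $\textrm{C}_{\check{\mu}}^{0}(S^0)$ at which $\underline{\ell}$ is finite. This is precisely what the finite-approximation machinery of Lemma~\ref{lemma:convergence_pwgeodesic} and of \S3 was built to supply, together with the remark that the relevant infimum is attained; granting that, nothing further is needed.
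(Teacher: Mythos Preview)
Your proposal is correct and follows essentially the same route as the paper: the paper simply defines $\underline{\ell}(\mathbf{x})=\inf_{\mathbf{y}\in\textrm{C}_{\check{\mu}}^{0}(S^0)}\underline{\ell}(\mathbf{x},\mathbf{y})$, notes that this infimum is attained, and then invokes verbatim the argument of Corollary~\ref{corollary:finiteconvexcore} with Proposition~\ref{proposition:infiniteconvex} supplying the strict joint convexity. Your extra paragraph making explicit why the core of each sheared strip lies in the admissible class and realizes the infimum is a welcome clarification of a point the paper leaves implicit.
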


\subsection{Convexity of length function of a measured geodesic lamination over Teichm\"uller space}

Now we establish the strict convexity for the length function $\ell_{\lambda}$ of a measured geodesic lamination $\lambda$ over Teichm\"uller space 
in terms of shear coordinates given by a complete geodesic lamination $\mu$.

The idea is to replace $\mu$ by a train-track approximation $\Theta$ of $\mu$.
Then $\lambda$ decomposes as a finite union of (in general uncountably many) segments contained in $\Theta$ and of (in general uncountably many) segments contained outside $\Theta$.
Each segment of the former type is called a branch segment and each segment of the latter type is called a triangle segment.
The length of $\lambda$ is then the sum over the branches of $\Theta$ and over the triangle regions of $\Sigma\setminus\Theta$ of the lengths of the branch and triangle segments.
We are therefore led to consider the set of piecewise geodesic measured laminations which are concatenation of branch and triangle segments and which are in the class of $\lambda$.
The length of $\lambda$ is the infimum of the lengths of those piecewise geodesic laminations (Theorem \ref{theorem:minimal_length}).

\subsection{Piecewise geodesic laminations measure equivalent to and close to $\lambda$.}
\label{subsection:piecewisegeodesic}

Let $\mu$ be a complete geodesic lamination of the surface $\Sigma$ and $\lambda$ be a measured geodesic lamination transverse to $\mu$.
We fix a base hyperbolic structure $h_{0}$ in $\mathcal{T}(\Sigma)$.

Consider an $\epsilon$-train-track approximation $\Theta=\Theta(\mu, \lambda, h_{0},\epsilon)$ of $\mu$ in $h_{0}$ such that the leaves of $\lambda$ cross the branches of $\Theta$ while avoiding the singular traverses.
This is possible for $\epsilon$ small enough because there is a uniform lower bound to the angles in which the leaves of $\lambda$ intersect those of $\mu$.
Consider also the train-track $\hat{\Theta}$ associated to $\Theta$ in Section \ref{subsection:thicktraintrack}.

Let $\eta>0$ be a small number compared to the widths of the branches of $\Theta$.
Let $(\lambda_{pw})=(\lambda_{pw})(\eta,\lambda,\Theta)$ be the set of \emph{piecewise geodesic} laminations $\alpha$ that are in the measure class of $\lambda$ and whose leaves satisfy the following conditions:
\begin{itemize}
\item For any branch $b$ of $\hat{\Theta}$, the components of $\alpha\cap b$ are geodesic segments lying at a distance at least $\eta$ away from the singular traverses; we name them \textbf{branch segments}; 
\item The components of $\alpha\setminus\alpha\cap\hat{\Theta}$ are geodesic segments we name \textbf{triangle segments}.
\end{itemize}
In other words, the singular points of the leaves of $\alpha$ (if any) only belong to the interior of geodesic components of the boundary of $\hat{\Theta}$.
Each leaf of $\alpha$ is therefore a concatenation of branch and triangle segments.
Note that the leaves of such a ``lamination" are allowed to intersect one another.

Note that, from the choice of $\Theta$, the measured geodesic lamination $\lambda$ belongs to $(\lambda_{pw})$.
We equip this space with the Hausdorff topology.\\

\noindent{\it Nota bene}: As aforesaid, leaves of ``laminations" in $(\lambda_{pw})$ might not be simple nor disjoint.\\

\begin{figure}
\centering
\psfrag{a}{\tiny{-2.1}}
\psfrag{b}{\tiny{3.2}}
\includegraphics[width=.5\linewidth]{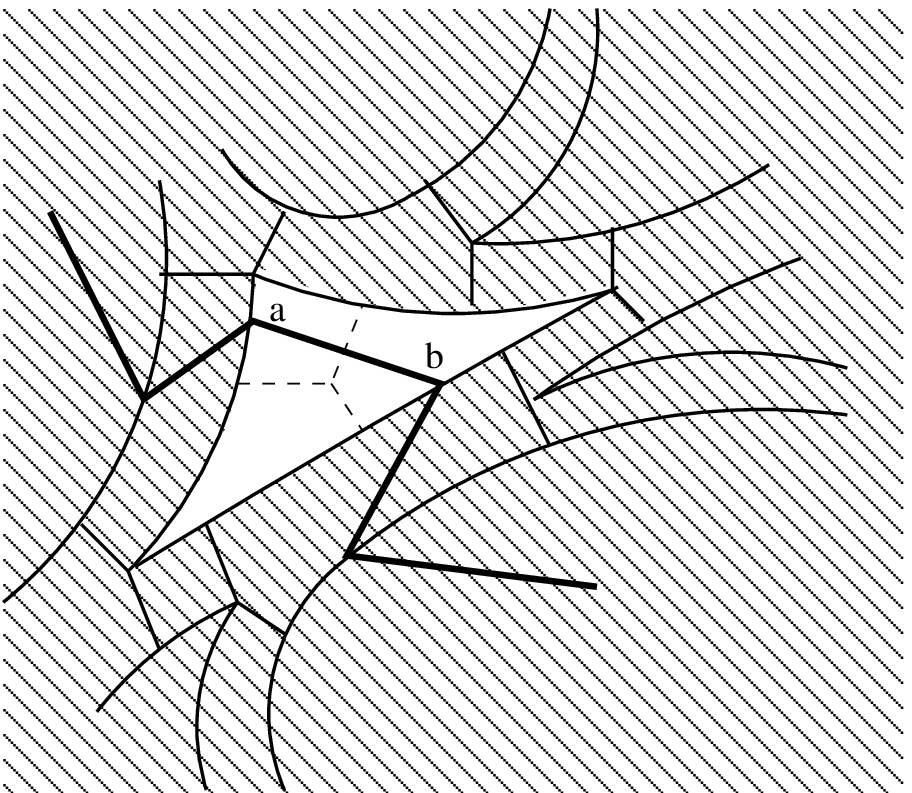}
\caption{This picture depicts a shadded piece of $\hat{\Theta}$ in which a triangle region $\Delta$ and several branches are seen. A piece $a$ of a leaf of an element $\alpha\in(\lambda_{pw})$ is also shown. The three dashed segments join the centre of the ideal triangle containing $\Delta$ to the corresponding distinguished points. The position of the two points of $a$ on the geodesic boundary of $\hat{\Theta}$ (or of $\Delta$) are given by two real numbers.}
\label{figshear6}
\end{figure}

Let us parameterize the set $(\lambda_{pw})$.
A lamination $\alpha$ in $(\lambda_{pw})$ is completely understood through the positions of its intersection points with the geodesic boundary of $\hat{\Theta}$.
The position of every such point is given by a real number that equals the signed distance of this point to the corresponding distinguished point, with the usual sign convention.
The number of leaves of a generic lamination is uncountable.
Because there is a canonical correspondence between leaves of any $\alpha\in(\lambda_{pw})$ and those of $\lambda$, if $b_{1},\ldots, b_{m}$ denote the branches of $\hat{\Theta}$ that are crossed through by $\lambda$ and if $K_{1},\ldots,K_{m}$ denote the leaf sets of $\lambda\cap b_{1},\ldots,\lambda\cap b_{m}$, then there is a canonical correspondence between the leaf sets of $\alpha\cap b_{j}$ and the leaf set $K_{j}$ ($1\leq j \leq m$).
(These sets are either finite or Cantor sets.)
Hence the endpoints of a branch segments of $\alpha$ contained in $b_{j}$ define a point of $\mathbb{R}^{K_{j}}\times\mathbb{R}^{K_{j}}$ (one factor per geodesic side of $b_{j}$).
We thus have a map
$$
\tau_{h_{0}}\,:\,(\lambda_{pw})\longrightarrow\left(\mathbb{R}^{K_{1}}\right)^{2}\times\cdots\times\left(\mathbb{R}^{K_{m}}\right)^{2}.
$$
We equip the Cartesian products with the weak$^\star$ topology.
Note that this space is a vector space.
With this topology, it is rather clear that $\tau_{h_{0}}$ is continuous.
The geodesic lamination $\lambda$ induces canonical bijections between the various sets $K_{j}$, encoding how the points on the boundary of $\hat{\Theta}$ are connected.
Therefore, given a point in $\left(\mathbb{R}^{K_{1}}\right)^{2}\times\cdots\times\left(\mathbb{R}^{K_{m}}\right)^{2}$ such that the corresponding points on $\partial\hat{\Theta}$ still belong to the geodesic sides of $\hat{\Theta}$, the connection pattern given by the bijections above yields a lamination in $(\lambda_{pw})$ in a unique way (recall that we allow the leaves to intersect each other, even themselves).
This shows that the image of the map $\tau_{h_{0}}$ is an open subset and that the map is a homeomorphism onto its image; we denote this image by $(\lambda)_{pw}(\Theta)=(\lambda)_{pw}(\Theta,\lambda,\eta,h_{0})$.

\subsection{Varying the hyperbolic structure about $h_{0}$.}
What happens when we slightly change the hyperbolic structure on $\Sigma$?
Before tackling the general case, let us focus on the case where the complete lamination $\mu$ has finitely many leaves.
We shall loosely describe this case, the omitted details will be provided while the general case will be discussed;
it is mainly a repetition of Section \ref{subsection:finiteconvexity} in the context of hyperbolic surfaces.

In this special case, each branch segment of $\alpha$ is entirely contained in a leaf of $\mu$.
Let us consider one leaf of $\alpha$ and one branch segment of this leaf.
This branch segment is contained in a geodesic side common to two adjacent ideal triangles $T_{+}, T_{-}$.
Let $p,q$ be the endpoints of this branch segment, 
labelled so that $p$ is connected to a triangle segment in $T_{+}$ and $q$ is connected to a triangle segment in $T_{-}$.
Let $y_{+}$ denote the coordinate of $p$ with respect to the distinguished point of $T_{+}$ and let $y_{-}$ be the coordinate of $q$ with respect to the distinguished point of $T_{-}$.
Let $\sigma(h_{0})$ be the shear between $T_{+}$ and $T_{-}$ for the base structure $h_{0}$ and let $\sigma(h)$ be the shear between the same two ideal triangles for a nearby structure $h$.
As we pass from the structure $h_{0}$ to the structure $h$, the triangles $T_{+}$ and $T_{-}$ are shifted from one the other by the amount $\sigma(h)-\sigma(h_{0})$.
If one represents both ideal triangles $T_{+}$ and $T_{-}$ in the hyperbolic plane, their shift can be described by fixing the common geodesic and by shifting each ideal triangle by the amount of $\frac{1}{2}(\sigma(h)-\sigma(h_{0}))$ as explained in \ref{subsection:finiteconvexity}.
Endpoints of triangle segments keep their respective positions within each ideal triangle whereas endpoints of branch segments are shifted by the amount of $\frac{1}{2}(\sigma(h)-\sigma(h_{0}))$.
Hence
$$
y_{\pm}(h)=y_{\pm}+\frac{1}{2}(\sigma(h)-\sigma(h_{0})).
$$
In order to get a connected curve for the structure $h$, we move the endpoints of the \emph{triangle segments} over the endpoints of the branch segment (cf. \ref{subsection:finiteconvexity}).
This reconnection process defines the image $\alpha(h)$ for the hyperbolic structure $h$ of the piecewise geodesic lamination $\alpha$.
This enables one to define the length of the lamination $\alpha$ with respect to the hyperbolic structure $h$ (which is the length of $\alpha_h$).
Note that in this case, only the lengths of the triangle segments change while the length of the branch segments remain the same. 
This state of fact won't persist as we move to the general case where lengths of branch segments will undergo changes as well, due to the varying thickness of the branches of the train-track $\Theta$ as the structure $h_{0}$ gets sheared.\\

Let us now deal with the general case with an arbitrary complete geodesic lamination $\mu$.
Consider hyperbolic structures $h$ on $\Sigma$ such that the $\epsilon$-train-track approximation $\Theta_{h}$ of $\mu$ for these structures $h$ is of the same topological type as $\Theta$ and such that the amplitude of the shears of $h$ compared to those of $h_{0}$ is smaller that $\eta/2$ for all branches of $\Theta$.
The set of such structures $h$ defines a neighbourhood $\mathcal{T}_{\eta}(\Theta)$ of $h_{0}$ in $\mathcal{T}(\Theta)$, namely, $\mathcal{T}_{\eta}(\Theta)=\{h\in\mathcal{T}(\Theta)\ :\ \vert\vert\sigma(h)-\sigma(h_{0})\vert\vert<\eta/2\}$.
Let us now consider an element $\alpha\in(\lambda_{pw})$ and a branch segment $a$ of it contained in the branch $b_{j}$ of $\hat{\Theta}$ ($j\in\{1,\cdots,m\}$).
The endpoints of $a$ give, via the map $\tau_{h_{0}}$, two real numbers $(y_{j}^{-},y_{j}^{+})$ encoding their positions with respect to the corresponding distinguished points.
A shear coordinate $\sigma_{j}$ is also associated to the branch $b_{j}$.
Now let $h$ be an element of $\mathcal{T}_{\eta}(\Theta)$.
We set
$$
y_{j}^{\pm}(h):=y_{j}^{\pm}+\frac{1}{2}(\sigma^{\pm}_{j}(h)-\sigma^{\pm}_{j}(h_{0})),
$$
where $\sigma^{\pm}_{j}(\cdot)$ is the shear between the ideal triangle whose side is $w^{\pm}$ and a \emph{fixed} ideal triangle crossing the branch $b_{j}$.
Note that $\sigma^{+}_{j}(\cdot)+\sigma^{-}_{j}(\cdot)=\sigma_{j}(\cdot)$.

Since we took care of bounding the intensity of shears by $\eta/2$, the newly defined points for the structure $h$ via their coordinates are still inside the geodesic boundary of $\hat{\Theta}_{h}$.
Using the reconnection pattern between the various sets $K_{j}$ given by $\lambda$, we get a measured lamination $\alpha_{h}$ of the surface $\Sigma$ with the structure $h$ which belongs to the measure class of $\lambda$.
If we denote by $(\lambda_{pw})_{h}$ the space of piecewise geodesic laminations with respect to $\Theta_{h}$ that belong to the measure class of $\lambda$, we get, through the map $\alpha\mapsto\alpha_{h}$, an identification between $(\lambda_{pw})$ and $(\lambda_{pw})_{h}$, for $h$ close enough to $h_{0}$.
We also get a continuous map
$$
\tau\,:\,\mathcal{T}_{\eta}(\Theta)\times(\lambda_{pw})\longrightarrow\left(\mathbb{R}^{K_{1}}\right)^{2}\times\cdots\times\left(\mathbb{R}^{K_{m}}\right)^{2},
$$
with $\tau(h_{0},\cdot)=\tau_{h_{0}}$, which associates to $(h,\alpha)$ the positions of the singular points of the leaves of $\alpha_{h}$ on the sides of ideal triangles, with respect to the corresponding distinguished points.

\noindent{\it Nota bene}: The image of $\lambda$ in $(\lambda_{pw})_{h}$ is \emph{not} $\lambda$, that is, $\lambda_{h}\neq\lambda$ in $(\lambda_{pw})_{h}$.

\subsection{Length function over $(\lambda_{pw})$}

Let $\alpha$ be an element of $(\lambda_{pw})$ and $h$ be a hyperbolic structure of $\mathcal{T}_{\eta}(\Theta)$.
Let us define the length $\ell_{\alpha}(h)$ of $\alpha$ with respect to the hyperbolic structure $h$.

Equip the surface $\Sigma$ with the structure $h$ and consider the $\epsilon$-train-track approximation $\Theta_{h}$ as defined above.
Let $y_{j}$ ($1\leq j\leq m$) be the coordinates of the point ${\bf y}=\tau(h,\alpha)\in\left(\mathbb{R}^{K_{1}}\right)^{2}\times\cdots\times\left(\mathbb{R}^{K_{m}}\right)^{2}$. 
For each branch $b_{j}$ of $\hat{\Theta}_{h}$ crossed through by leaves of $\alpha$ (or $\lambda$), the numbers $y_{j}\in\left(\mathbb{R}^{K_{j}}\right)^{2}$ define points on both geodesic sides of $b_{j}$: 
if $x$ is a point of $K_{j}$ then the pair of real numbers $y_{j}(x)$ represents the coordinates of two points whose distance apart we denote by $\ell_{x}(h)$.
The transverse measure $d\lambda$ of $\lambda$ defines a measure on each leaf set $K_{j}$ (see Section \ref{subsection:moreinfo}).
The geodesic sides of $\hat{\Theta}_{h}$ provide an arc system for $\alpha$ (Section \ref{section:length}) and the length of $\alpha\cap b_{j}$ is given by
$$
\ell_{\alpha\cap b_{j}}(h)=\int_{K_{j}}\ell_{x}(h)d\lambda(x).
$$
We proceed in the same manner with the triangle segments of $\alpha$ in each triangle component $\Delta$ of $\Sigma\setminus\hat{\Theta}_{h}$.
This case is actually simpler since each triangle component of $\Sigma\setminus\hat{\Theta}_{h}$ is isometric to a triangle component of $\Sigma\setminus\hat{\Theta}$.
We choose three arcs, $\delta_{1},\delta_{2},\delta_{3}$, transverse to the triangle segments of $\alpha\cap\Delta$ such that all triangle segments going from one given side of $\Delta$ to another are intersected by the same arc $\delta_{i}$, $i\in\{1,2,3\}$.
The $h$-length of $\alpha\cap\Delta$ of all triangle segments of $\alpha$ crossing $\Delta$ is given by
$$
\ell_{\alpha\cap\Delta}(h)=\sum_{i=1}^{3}\int_{\delta_{i}\cap\alpha}\ell_{x}(h)d\lambda(x).
$$

Summing all pieces gives the length of $\alpha$:
$$
\ell_{\alpha}(h)=\sum_{\Delta}\ell_{\beta\cap\Delta}(h)+\sum_{b\in B}\ell_{\beta\cap b}(h).
$$

The space $(\lambda_{pw})$ is parameterized by $(\lambda_{pw})(\Theta)$ while $\mathcal{T}_{\eta}(\Theta)$ by an open subset of ${\mathrm T}(\Theta)$. 
In order to lighten notation, we still denote this open subset by ${\mathrm T}(\Theta)$, forgetting the restriction represented by $\eta$.
The length functional $\ell$ can thus be viewed as a continuous map
$$
\underline{\ell}\,:\,{\mathrm T}(\Theta)\times(\lambda_{pw})(\Theta)\longrightarrow\mathbb{R}_{+},
$$
with $\underline{\ell}({\bf x},{\bf y})=\ell_{\alpha_{h}}(h)$ where $h=\sigma^{-1}({\bf x})$ and $\alpha_{h}$ obtained by connecting the points given by $\tau(h,\tau_{h_{0}}^{-1}({\bf y}))$.

\subsection{strict convexity of length function over $(\lambda_{pw})$}

We briefly recapitulate the setting.
Let $\mu$ be a \emph{complete} geodesic lamination of the surface $\Sigma$, $\lambda$ be a \emph{measured} geodesic lamination transverse to $\mu$ and $h_{0}$ a fixed base hyperbolic structure on $\Sigma$.
Let $\Theta$ be a train-track approximation of $\mu$ which is ``good" with respect to $\lambda$ in the sense of Section \ref{subsection:piecewisegeodesic}. 
Let $(\lambda_{pw})$ be the set of laminations in the measure class of $\lambda$ which are close enough to $\lambda$ in the Hausdorff topology and piecewise-geodesic with respect to $\Theta$.
Let $\sigma_{\Theta}\,:\,\mathcal{T}(\Theta)\to{\mathrm T}(\Theta)$ be the \emph{local} shear coordinates associated to the train-track approximation $\Theta$.
Let $\tau_{\Theta}\,:\,(\lambda_{pw})\to(\lambda_{pw})(\Theta)$ be the map that gives the positions on $\hat{\Theta}$ of the endpoints of the branch and triangle segments of elements of $(\lambda_{pw})$.
The set of branches of $\Theta$ is denoted by $B$.
We index the branches $b_{1},\cdots, b_{n}$ with $n=\vert B\vert$, so that the $m$ first branches ($m\leq n$) are crossed through by $\lambda$.
Recall that ${\mathrm T}(\Theta)$ is an open subset of $\mathbb{R}^{n}$ and that $(\lambda_{pw})(\Theta)$ is an open subset of $\left(\mathbb{R}^{K_{1}}\right)^{2}\times\cdots\times\left(\mathbb{R}^{K_{m}}\right)^{2}$, where $K_{j}$ is the leaf set of $\lambda\cap b_{j}$.
The length functional
$$
\underline{\ell}\,:\,{\mathrm T}(\Theta)\times(\lambda_{pw})(\Theta)\longrightarrow\mathbb{R}_{+}
$$
associates to the coordinates $({\bf x},{\bf y})$ the length of the lamination $\alpha:=\tau_{\Theta}^{-1}({\bf y})$ with respect to the hyperbolic structure $h:=\sigma_{\Theta}^{-1}({\bf x})$ (denoted by $\alpha_{h}$).\\

Here is the central result of our paper.
\begin{proposition}
\label{proposition:convexity1}
The length functional $\underline{\ell}$ is a strictly convex function of the coordinates $({\bf x},{\bf y})\in{\mathrm T}(\Theta)\times(\lambda_{pw})(\Theta)$.
\end{proposition}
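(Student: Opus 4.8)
The plan is to transfer the single-strip argument of Proposition~\ref{proposition:infiniteconvex} to the surface through the train-track $\hat{\Theta}$: one writes the length of a piecewise geodesic lamination as an integral, against the transverse measure of $\lambda$, of lengths of triangle segments and branch segments, observes that each of these is a convex function of $(\mathbf{x},\mathbf{y})$ which is \emph{strictly} convex along every affine path along which its endpoints really move, and then combines. Concretely, I would start from the decomposition obtained in the previous subsection: for $h=\sigma_{\Theta}^{-1}(\mathbf{x})$ and $\alpha=\tau_{\Theta}^{-1}(\mathbf{y})$,
$$
\underline{\ell}(\mathbf{x},\mathbf{y})=\sum_{\Delta}\sum_{i=1}^{3}\int_{\delta_{i}\cap\alpha}\ell_{x}(h)\,d\lambda(x)\ +\ \sum_{j=1}^{m}\int_{K_{j}}\ell_{x}(h)\,d\lambda(x),
$$
in which every integrand $\ell_{x}(h)$ is the length of a single triangle segment (first sum) or branch segment (second sum) of $\alpha_{h}$. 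The two kinds of segments are handled by the two convexity results already established.

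For a branch segment lying in a branch $b_{j}$ of $\hat{\Theta}_{h}$, I would lift to $\mathbb{H}^{2}$: the two geodesic sides of $b_{j}$ are carried by two leaves $g^{\pm}$ of $\widetilde{\mu}$ which bound a strip $S_{j}$ whose wedge cut is the one induced by $\mu$. By Section~\ref{subsection:local_shear_general} the shears of $S_{j}$ are integral linear combinations of the distinguished shears, hence affine functions of $\mathbf{x}$, while the positions of the endpoints of the segment on $g^{\pm}$ are the matching coordinates of $\mathbf{y}$ shifted by $\tfrac12(\sigma^{\pm}_{j}(h)-\sigma^{\pm}_{j}(h_{0}))$, which is again affine in $(\mathbf{x},\mathbf{y})$ by the reconnecting formula of Section~\ref{subsection:piecewisegeodesic}. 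Proposition~\ref{proposition:convex} says that $\ell_{x}(h)$ is a strictly convex function of the shears of $S_{j}$ and of these endpoint positions; precomposing with the affine map above, $\ell_{x}(h)$ is a convex function of $(\mathbf{x},\mathbf{y})$ which is strictly convex along any affine path along which the shears of $S_{j}$ or the endpoint positions of the segment are non-constant. For a triangle segment in a component $\Delta$ of $\Sigma\setminus\hat{\Theta}_{h}$ one is in the rigid special case: $\Delta$ sits inside an ideal triangle $T$ of $\mu$ that does not depend on $\mathbf{x}$, and the two sides of $T$ carrying the endpoints of the segment bound a wedge, so Proposition~\ref{proposition:d_convex} gives that $\ell_{x}(h)$ is a strictly convex function of the two $\mathbf{y}$-coordinates locating its endpoints (again affinely shifted so as to keep $\alpha_{h}$ connected), hence once more convex in $(\mathbf{x},\mathbf{y})$ and strictly convex along any affine path moving one of these endpoints.

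It then remains to combine these facts exactly as in Proposition~\ref{proposition:infiniteconvex}. Convexity of $\underline{\ell}$ is immediate, being an integral against $d\lambda$ of convex functions of $(\mathbf{x},\mathbf{y})$. For strict convexity, I would take two distinct points of ${\mathrm T}(\Theta)\times(\lambda_{pw})(\Theta)$ and the affine segment joining them. If some coordinate of $\mathbf{x}$ is non-constant along this segment, then the shears of at least one strip $S_{j}$ ($1\le j\le m$) vary; since the branch $b_{j}$ is crossed by $\lambda$, its mass $m_{j}=\int_{K_{j}}d\lambda$ is positive, so by the previous paragraph the summand $\int_{K_{j}}\ell_{x}(h)\,d\lambda(x)$ is strictly convex along the segment. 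Otherwise $\mathbf{x}$ is constant and some coordinate of $\mathbf{y}$ is non-constant, in which case at least one triangle or branch segment has a moving endpoint and the corresponding integrand is strictly convex. In either case $\underline{\ell}$ is a sum of convex functions, one of which is strictly convex along the segment, whence the strict convexity of $\underline{\ell}$.

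The step that demands care — precisely as in Lemma~\ref{lemma:convergence_pwgeodesic} and Proposition~\ref{proposition:infiniteconvex} — is making the term-by-term analysis legitimate when the leaf sets $K_{j}$ are uncountable: one fixes a finite approximation of the $K_{j}$ and a sequence of geometric realizations $\hat{\Theta}_{n}$, carries out the argument above for the finitely supported induced laminations $\alpha_{h,n}$ (whose length and whose leaf and wedge parts converge to those of $\alpha_{h}$), passes convexity to the limit, and checks that the integral over $K_{j}$ of the (convex, and term-by-term strictly convex) integrands really is strictly convex when $m_{j}>0$. This limiting passage, which is where the weak$^{\star}$ topology on the spaces $(\mathbb{R}^{K_{j}})^{2}$ and the length estimates of Section~\ref{subsection:infiniteconvexity} enter, is the only genuine obstacle; everything else is a bookkeeping transfer of the strip results through $\hat{\Theta}$.
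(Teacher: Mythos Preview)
Your proof is correct and follows essentially the same route as the paper's: decompose $\underline{\ell}$ into integrals of triangle-segment and branch-segment lengths, invoke Proposition~\ref{proposition:d_convex} for the former and Proposition~\ref{proposition:convex} (after the affine change of variables coming from the reconnecting formula and from \S\ref{subsection:local_shear_general}) for the latter, then integrate against $d\lambda$ and sum. The paper's own proof is the three-line version of this, citing Lemmas~\ref{lemma:triangle} and~\ref{lemma:branch}; you have effectively written out those lemmas and the combination step in full.

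One remark: the limiting passage you single out as ``the only genuine obstacle'' is not needed here. The approximation by finite wedge cuts is what establishes Proposition~\ref{proposition:convex} in \S\ref{subsection:infiniteconvexity}, and once that proposition is in hand each branch-segment length is already a (strictly) convex function of $(\mathbf{x},\mathbf{y})$; the integration over $K_{j}$ is then an ordinary integral of continuous convex integrands against a finite measure, and strict convexity survives because the strict inequality holds for $d\lambda$-almost every $x$. No second approximation of the $K_{j}$'s is required.
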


The length $\underline{\ell}({\bf x},{\bf y})=\ell_{\alpha}(h)$ of $\alpha$ for the structure $h$ is the sum of the lengths of the branch segments contained in each branch of $\hat{\Theta}_{h}$ and of the triangle segments contained in each component of $\Sigma\setminus\hat{\Theta}_{h}$.
Let us focus on the length of each type of such segments.

\begin{lemma}
\label{lemma:triangle}
The length of a triangle segment is a strictly convex function of the coordinates $({\bf x},{\bf y})\in{\mathrm T}(\Theta)\times(\lambda_{pw})(\Theta)$.
\end{lemma}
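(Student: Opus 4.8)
The plan is to reduce the length of a triangle segment to the distance function of a single wedge, invoke Proposition~\ref{proposition:d_convex}, and check that the positions of the two endpoints depend affinely on $({\bf x},{\bf y})$.

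First I would localize the segment. A triangle segment of $\alpha_h$ lies in one component $\Delta$ of $\Sigma\setminus\hat{\Theta}_h$; since each such component is isometric to the corresponding component of $\Sigma\setminus\hat{\Theta}$, lifting to the $h_0$-universal cover places $\Delta$ inside a single ideal triangle $T$ of $\widetilde{\Sigma}\setminus\widetilde{\mu}$. By uniqueness of geodesics (and automatically, for $\alpha$ Hausdorff-close to the genuine geodesic lamination $\lambda$), the two endpoints of the triangle segment lie on two \emph{distinct} geodesic sides of $\Delta$, hence on subarcs of two distinct sides $s^-,s^+$ of $T$. The sides $s^-$ and $s^+$ share the ideal vertex $s^-\cap s^+$, so they bound a wedge $w$, and the triangle segment is a chord of $w$ with one endpoint on each side. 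I would equip $w$ with the two distinguished points of $T$ lying on $s^\pm$ — these are precisely the reference points used in Section~\ref{subsection:piecewisegeodesic} to coordinatize points of $\partial\hat{\Theta}$, see Figure~\ref{figshear6} — and parameterize $s^-,s^+$ by signed arc length from them. Then Proposition~\ref{proposition:d_convex} says that the distance function $d:\mathbb{R}^2\to\mathbb{R}_+$ between the so-parameterized points of $\partial w$ is strictly convex (any discrepancy of sign conventions is an affine change of coordinates on $\mathbb{R}^2$ and is harmless).

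Next I would record the affine dependence of the endpoint positions. By the construction of $\alpha_h$ in the preceding subsections, each endpoint of the triangle segment coincides, for the structure $h$, with an endpoint of a branch segment contained in some branch $b_j$, whose position is $y_j^\pm(h)=y_j^\pm+\tfrac12\bigl(\sigma_j^\pm(h)-\sigma_j^\pm(h_0)\bigr)$, where $y_j^\pm$ is a coordinate of ${\bf y}$ and $\sigma_j^\pm(h)$ is a shear between ideal triangles. By Section~\ref{subsection:local_shear_general} this shear is an integral linear combination of the distinguished shears, hence a linear function of ${\bf x}$. Consequently the pair $\bigl(u({\bf x},{\bf y}),v({\bf x},{\bf y})\bigr)$ of arc-length parameters of the two endpoints on $s^-$ and $s^+$ is an affine function $A$ of $({\bf x},{\bf y})$, and the length of the triangle segment equals $d\circ A$. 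As the composition of the strictly convex $d$ with an affine map it is convex on ${\mathrm T}(\Theta)\times(\lambda_{pw})(\Theta)$, and it is strictly convex along every affine segment of this domain along which $A$ — equivalently, along which the pair of endpoints of the triangle segment — does not stay constant. This is the sense in which the length of a triangle segment is strictly convex, and it is exactly what is needed when summing over all branch and triangle segments.

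The only genuine obstacle is bookkeeping: one must verify that the affine dependence really survives the shearing–reconnection procedure for \emph{both} endpoints at once — the two endpoints may lie on sides belonging to two different branches $b_j,b_k$, so $u$ and $v$ are affine in different blocks of the shear coordinates — and that the reference distinguished points of the wedge $w$ are the same ones implicitly used to define ${\bf y}$. Once this is pinned down, the rest is the by-now-routine composition-with-an-affine-map argument already carried out for wedge segments in Sections~\ref{subsection:finiteconvexity} and~\ref{subsection:infiniteconvexity}.
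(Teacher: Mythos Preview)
Your argument is correct and is exactly the paper's approach: the paper dispatches Lemma~\ref{lemma:triangle} in one sentence, observing that it is ``a straightforward consequence of the strict convexity of the hyperbolic distance function (see Proposition~\ref{proposition:d_convex})'', the point being precisely that the triangle region is isometrically fixed as $h$ varies and the two endpoint positions are affine in $({\bf x},{\bf y})$. Your write-up simply unpacks this, including the caveat that strict convexity holds along any affine segment that actually moves the endpoints---which is the form used in the proof of Proposition~\ref{proposition:convexity1}.
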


\begin{lemma}
\label{lemma:branch}
The length of a branch segment is a convex function of the coordinates $({\bf x},{\bf y})\in{\mathrm T}(\Theta)\times(\lambda_{pw})(\Theta)$ which is strictly convex if the branch segment is not entirely contained in a leaf of $\mu$.
\end{lemma}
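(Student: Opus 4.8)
The plan is to identify the length of a single branch segment with a quantity already proved strictly convex in Propositions~\ref{proposition:d_convex}, \ref{proposition:convex} and \ref{proposition:core_convex}, and then to check that all the data fed into that model depend \emph{affinely} on the coordinates $({\bf x},{\bf y})$. First I would fix a branch segment $a$ of a leaf of some $\alpha\in(\lambda_{pw})$, contained in a branch $b=b_{j}$ of $\hat{\Theta}$. Its two endpoints lie on the two geodesic sides $w^{-},w^{+}$ of $b$; these are contained in sides of ideal triangles $T^{-},T^{+}$ of $\mu$, and the endpoints are located, with respect to the distinguished points of $w^{\pm}$, by the two coordinates $(y_{j}^{-},y_{j}^{+})$ of ${\bf y}$. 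As recalled in the description of the reconnection process in the subsection ``Varying the hyperbolic structure about $h_{0}$'', when the structure is deformed from $h_{0}$ to $h=\sigma_{\Theta}^{-1}({\bf x})$ these endpoints are placed at $y_{j}^{\pm}(h)=y_{j}^{\pm}+\frac{1}{2}\bigl(\sigma_{j}^{\pm}(h)-\sigma_{j}^{\pm}(h_{0})\bigr)$, where $\sigma_{j}^{\pm}$ is the shear between $T^{\pm}$ and a fixed reference ideal triangle. The crucial preliminary point is that, by \S 6 of Section~\ref{subsection:local_shear_general}, \emph{every} shear between a pair of ideal triangles of $\widetilde{\Sigma}\setminus\widetilde{\mu}$ is an integral linear combination of the distinguished shears; hence each $\sigma_{j}^{\pm}(h)$, and each shear between consecutive ideal triangles of a wedge cut, is linear in ${\bf x}$, so that every scalar handed to the strip model (strip shear coordinates, signed distances of endpoints) is an affine function of $({\bf x},{\bf y})$.

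Next I would lift $b$ to $\mathbb{H}^{2}$ using $h$ and split into two cases. If $a$ is contained in a leaf of $\mu$, the lifted sides $\widetilde{w}^{-},\widetilde{w}^{+}$ lie on one geodesic $\widetilde{s}$, $a$ is the sub-arc of $\widetilde{s}$ between its endpoints, and $\ell_{a}(h)$ equals the absolute value of the difference of the two affine endpoint positions (adjusted by the signed distance, also affine in ${\bf x}$, between the two distinguished points on $\widetilde{s}$); the absolute value of an affine function is convex, which is the weak conclusion in this case. If $a$ is \emph{not} contained in a leaf of $\mu$, then $\widetilde{w}^{-}\neq\widetilde{w}^{+}$ and $a$ is a geodesic segment crossing the strip (or wedge) bounded by $\widetilde{w}^{-},\widetilde{w}^{+}$, equipped with the wedge cut $\check{\mu}$ formed by the leaves of $\widetilde{\mu}$ separating $\widetilde{T}^{-}$ and $\widetilde{T}^{+}$. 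This wedge cut has zero area, since a geodesic lamination meets the core of the strip in a Cantor set, and it is in general infinite, so the relevant statement is the infinite–wedge-cut form of Proposition~\ref{proposition:convex} (Section~\ref{subsection:infiniteconvexity}), with Proposition~\ref{proposition:d_convex} covering the degenerate empty-wedge-cut case.

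That proposition gives strict convexity of $\ell_{a}$ in the strip's shear coordinates and in the signed distances of the endpoints of $a$; precomposing with the affine map $({\bf x},{\bf y})\mapsto(\text{strip shears},\,y_{j}^{\pm}(h))$ yields a convex function of $({\bf x},{\bf y})$ which is strictly convex along every affine segment on which these inputs are not all constant, and since the coordinate $y_{j}^{-}$ of ${\bf y}$ already moves one endpoint this is precisely the strict convexity asserted by the lemma (to be read, as in the proof of the proposition of Section~\ref{subsection:finiteconvexity}, in the form used later: convex, and strictly convex along any path non-constant in the coordinates on which $\ell_{a}$ genuinely depends). Assembling, Lemma~\ref{lemma:branch} follows by glueing the two cases.

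The main obstacle is the identification underlying the transverse case: one must verify that the reconnection recipe produces, for the branch segment $a$, exactly the configuration ``geodesic segment across a strip with a fixed wedge cut'' of Sections~\ref{subsection:finiteconvexity} and \ref{subsection:infiniteconvexity}, with shear coordinates and endpoint positions given by the stated affine expressions, so that Proposition~\ref{proposition:convex} applies verbatim; this is where the factor $\frac{1}{2}$ in the shearing, the relation $\sigma_{j}^{+}+\sigma_{j}^{-}=\sigma_{j}$, and the linearity of \S 6 are used together. A secondary point requiring care is that the wedge cut inside a branch of $\hat{\Theta}$ is generically uncountable, so one must invoke Proposition~\ref{proposition:convex} in its infinite form and check the zero-area hypothesis for $\check{\mu}$, which is automatic since $\mu$ is a geodesic lamination.
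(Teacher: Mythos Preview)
Your approach is essentially the same as the paper's: lift the branch to the universal covering, view the branch segment as a geodesic segment crossing the strip bounded by the facing sides of the two adjacent ideal triangles equipped with the wedge cut induced by $\widetilde{\mu}$, and invoke Proposition~\ref{proposition:convex}. The paper's argument is stated in a single sentence and does not spell out the degenerate case (branch segment contained in a leaf of $\mu$) or the affine dependence of the strip data on $({\bf x},{\bf y})$ via \S 6 of Section~\ref{subsection:local_shear_general}; your write-up makes both of these explicit, but the route is the same.
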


The case of a triangle segment is a straightforward consequence of the strict convexity of the hyperbolic distance function (see Proposition \ref{proposition:d_convex} of Section \ref{subsection:wedgeconvexity}).
The case of a branch segment is harder because its endpoints belong to geodesics that vary with the hyperbolic structure so it is not possible to deduce it directly from the strict convexity of the distance function, as opposed to the triangle segment case.
However this situation has been handled in Section \ref{subsection:finiteconvexity} and \ref{subsection:infiniteconvexity}.
Lemma \ref{lemma:branch} is a straightforward consequence of Proposition \ref{proposition:convex}: it suffices to lift the branch $e$ considered to the universal covering and to consider the strip extending this lift.
The restrictions imposed upon the hyperbolic structures $h$ match those imposed upon hyperbolic structures on strips.

\begin{proof}[Proof of Proposition \ref{proposition:convexity1}]
By Lemmas \ref{lemma:triangle} \& \ref{lemma:branch}, the length of a branch or triangle segment is a strictly convex function over ${\mathrm T}(\Theta)\times(\lambda_{pm})(\Theta)$.
Integrating these lengths against $d\lambda$ still yields strictly convex functions. 
The sum of all these functions is the length functional $\underline{\ell}$ and is strictly convex.
\end{proof}

We finally get our main result.

\begin{theorem}
\label{theorem:main}
Let $\lambda$ be a measured geodesic lamination transverse to the complete geodesic lamination $\mu$ of $\Sigma$.
Length function 
\begin{eqnarray*}
\ell_{\lambda}\,:\,\mathcal{T}(\Sigma)&\to&\mathbb{R}_{+},\\
h&\to&\ell_{\lambda}(h)
\end{eqnarray*}
is a convex function of shear coordinates defined over ${\mathrm T}(\Sigma)$ by $\mu$ which is strictly convex whenever $\lambda$ intersects all leaves of $\mu$.  
\end{theorem}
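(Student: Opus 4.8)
The plan is to derive the theorem from the strict convexity of the auxiliary functional $\underline{\ell}$ (Proposition~\ref{proposition:convexity1}) by two soft steps: first, realize $\ell_{\lambda}$ on each coordinate patch as an infimum of $\underline{\ell}$ and pass from the convexity of $\underline{\ell}$ to that of the infimum; then, glue the resulting local statements into a global one along the patches $\mathcal{T}(\Theta)$, using that the transport maps of Section~\ref{subsection:global_shear} are linear isomorphisms.

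\emph{Step 1 (local reduction).} Fix a point $h_{0}\in\mathcal{T}(\Sigma)$, choose a train-track approximation $\Theta$ of $\mu$ that is good for $\lambda$ in the sense of Section~\ref{subsection:piecewisegeodesic} with $h_{0}$ as base structure, and work on the neighbourhood $\mathcal{T}_{\eta}(\Theta)$ of $h_{0}$ with its coordinates $\sigma_{\Theta}$ and $\tau_{\Theta}$. By Theorem~\ref{theorem:minimal_length}, for every $h\in\mathcal{T}_{\eta}(\Theta)$ the length $\ell_{\lambda}(h)$ equals the infimum of $\ell_{\alpha}(h)$ over all $\alpha\in(\lambda_{pw})$, and this infimum is attained (by the geodesic representative). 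In coordinates this reads
$$
\ell_{\lambda}\bigl(\sigma_{\Theta}^{-1}(\mathbf{x})\bigr)=\min_{\mathbf{y}\in(\lambda_{pw})(\Theta)}\underline{\ell}(\mathbf{x},\mathbf{y}),
$$
where the set $(\lambda_{pw})(\Theta)\subset\bigl(\mathbb{R}^{K_{1}}\bigr)^{2}\times\cdots\times\bigl(\mathbb{R}^{K_{m}}\bigr)^{2}$ is convex, being cut out by interval conditions (each marked point must stay on a geodesic side of $\hat{\Theta}$, away from the singular traverses). We may also shrink $\eta$ so that the image $\sigma_{\Theta}(\mathcal{T}_{\eta}(\Theta))$ is a convex neighbourhood of $\sigma_{\Theta}(h_{0})$.

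\emph{Step 2 (infimum of a convex function).} Since the fibre is convex and the minimum is attained, the three-line argument of Corollary~\ref{corollary:finiteconvexcore} applies verbatim: given $\mathbf{x}\neq\mathbf{x}'$ with minimizers $\mathbf{y},\mathbf{y}'$ and $t\in(0,1)$,
$$
\ell_{\lambda}\bigl(\sigma_{\Theta}^{-1}((1-t)\mathbf{x}+t\mathbf{x}')\bigr)\le\underline{\ell}\bigl((1-t)(\mathbf{x},\mathbf{y})+t(\mathbf{x}',\mathbf{y}')\bigr)\le(1-t)\,\ell_{\lambda}\bigl(\sigma_{\Theta}^{-1}(\mathbf{x})\bigr)+t\,\ell_{\lambda}\bigl(\sigma_{\Theta}^{-1}(\mathbf{x}')\bigr),
$$
the last inequality being the convexity of $\underline{\ell}$ along the nondegenerate segment joining $(\mathbf{x},\mathbf{y})$ to $(\mathbf{x}',\mathbf{y}')$. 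Hence $\ell_{\lambda}$ is convex in the local shear coordinates on $\mathcal{T}_{\eta}(\Theta)$. If moreover $\lambda$ crosses every leaf of $\mu$ then, by Proposition~\ref{proposition:convexity1} (via Lemma~\ref{lemma:branch}, all branch segments of elements of $(\lambda_{pw})$ being transverse to $\mu$), $\underline{\ell}$ is \emph{strictly} convex, so the last inequality is strict and $\ell_{\lambda}$ is strictly convex on $\mathcal{T}_{\eta}(\Theta)$. Conversely, if $\lambda$ misses a leaf of $\mu$, shearing along a sublamination of $\mu$ disjoint from $\lambda$ is an earthquake that fixes $\lambda$, so $\underline{\ell}$ is affine along the corresponding direction and strict convexity fails; this accounts for the crossing hypothesis.

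\emph{Step 3 (globalization) and the main obstacle.} Letting $h_{0}$ range over $\mathcal{T}(\Sigma)$ — every point lies in some $\mathcal{T}(\Theta)$ because the patches exhaust $\mathcal{T}(\Sigma)$ — Steps~1--2 show that $\ell_{\lambda}$ is convex, and strictly convex when $\lambda$ crosses all leaves of $\mu$, on a neighbourhood of each point, in the \emph{local} shear coordinates of the corresponding train track. The transport maps $p_{\Theta,\Theta'}$ of Section~\ref{subsection:global_shear} are coherent linear bijections, so they identify all these local coordinate systems with the single global system $\widetilde{\sigma}_{\Theta}$; since a linear change of coordinates preserves (strict) convexity, $\ell_{\lambda}\circ\widetilde{\sigma}_{\Theta}^{-1}$ is (strictly) convex in a neighbourhood of every point of its domain $\widetilde{\sigma}_{\Theta}(\mathcal{T}(\Sigma))$, and, in the strict case, is not affine on any segment (a function strictly convex near the midpoint of a segment cannot be affine on it). Provided this domain is convex, local (strict) convexity upgrades to global (strict) convexity, giving the theorem. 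The genuinely delicate point left is exactly this last input — that Thurston's shear coordinates identify $\mathcal{T}(\Sigma)$, and each patch $\mathcal{T}(\Theta)$, with a \emph{convex} region of $\mathbb{R}^{3|\chi(\Sigma)|}$ (cf.\ Sections~\ref{section:shearcoordinates}--\ref{section:developing}), without which the patchwise convexity statements would not assemble; the rest of the argument is formal, the substantive content residing in Theorem~\ref{theorem:minimal_length} and Proposition~\ref{proposition:convexity1}.
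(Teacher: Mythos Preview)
Your proposal is correct and follows the paper's strategy: realize $\ell_{\lambda}$ as the infimum over $(\lambda_{pw})$ of the strictly convex functional $\underline{\ell}$ (via Theorem~\ref{theorem:minimal_length} and Proposition~\ref{proposition:convexity1}), deduce (strict) convexity of the infimum by the three-line argument, and then globalize using the linearity of the transport maps.

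The one place where the paper proceeds differently is your Step~3. Rather than proving local (strict) convexity near every point and then invoking ``local convexity on a convex domain implies global convexity'', the paper fixes an exhausting sequence $\Theta=\Theta_{0}\supset\Theta_{1}\supset\cdots$ of finer train-track approximations, so that $\mathcal{T}(\Theta_{n})\nearrow\mathcal{T}(\Sigma)$. Given any two points $h,h'$, the affine segment between their global shear coordinates has compact preimage in $\mathcal{T}(\Sigma)$, hence lies entirely in some single patch $\mathcal{T}(\Theta_{N})$; strict convexity on that one patch, transported back by the linear map $p_{N}$, gives the inequality along the whole segment in one stroke. This avoids the auxiliary lemma ``locally convex $\Rightarrow$ convex'' and the bookkeeping of varying $(\Theta,\eta)$ with $h_{0}$. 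Your approach works too, but the paper's is slightly cleaner on this point. Your observation that the convexity of the image of $\widetilde{\sigma}_{\Theta}$ is the genuine external input is accurate; the paper uses it implicitly when it applies $\widetilde{\sigma}_{\Theta}^{-1}$ to the affine segment.
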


\begin{proof}
Firstly, for any given fine enough train-track approximation $\Theta$ of $\mu$, length function $\ell_{\lambda}$ is strictly convex in the shear coordinates over the open subset ${\mathrm T}(\Theta)$ of ${\mathrm T}(\Sigma)$:
we know from Proposition \ref{proposition:convexity1} that the length functional $\underline{\ell}$ is strictly convex over 
${\mathrm T}(\Theta)\times(\lambda_{pm})(\Theta)$. 
By Theorem \ref{theorem:minimal_length} and because $\lambda\in(\lambda_{pm})_{h}$ for all $h\in\mathcal{T}(\Theta)$, we have 
$$
\ell_{\lambda}(h)=\underline{\ell}({\bf x}):=\inf_{{\bf y}\in(\lambda_{pm})(\Theta)}\underline{\ell}({\bf x},{\bf y}),
$$
where ${\bf x}=\sigma_{\Theta}(h)\in{\mathrm T}(\Theta)$.
It is then easily checked that $\ell_{\lambda}$ is strictly convex indeed (see the proof of Proposition \ref{proposition:convex}).
\begin{comment}
we extend the function $\underline{\ell}$ over the vector space $V:=\mathbb{R}^{n}\times\big{(}\mathbb{R}^{K_{1}}\big{)}^{2}\times\cdots\times\big{(}\mathbb{R}^{K_{m}}\big{)}^{2}$ containing ${\mathrm T}(\Theta)\times(\lambda_{pm})(\Theta)$ as an open subset.
Let $({\bf x},{\bf y})$ and $({\bf x'},{\bf y'})$ be two points in $V$.
Let ${\bf y_{m}}$ and ${\bf y'_{m}}$ be the coordinates realizing the minimum of $\ell$ for the structures ${\bf x}$ and ${\bf x'}$ respectively, that is, 
$$
\min_{{\bf y}}\underline{\ell}({\bf x},{\bf y})=\underline{\ell}({\bf x},{\bf y_{m}})=\underline{\ell}({\bf x})\ \textrm{and}\ \min_{{\bf y'}}\underline{\ell}({\bf x'},{\bf y'})=\underline{\ell}({\bf x'},{\bf y'_{m}})=\underline{\ell}({\bf x'}).
$$
Note that ${\bf y_{m}}$ and ${\bf y'_{m}}$ both belong to $(\lambda_{pm})(\Theta)$.
For all $t\in[0;1]$, one has 
\begin{eqnarray*}
\underline{\ell}\left((1-t){\bf x}+t{\bf x'}\right)&\leq&\underline{\ell}((1-t){\bf x}+t{\bf x'},(1-t){\bf y_{m}}+t{\bf y'_{m}})\\
&\leq&\underline{\ell}((1-t)({\bf x},{\bf y_{m}})+t({\bf x'},{\bf y'_{m}}))\\
&<&(1-t)\underline{\ell}({\bf x},{\bf y_{m}})+t\underline{\ell}({\bf x'},{\bf y'_{m}})\\
&<&(1-t)\underline{\ell}({\bf x})+t\underline{\ell}({\bf x'}).
\end{eqnarray*}
\end{comment}

We then know that the function $\underline{\ell}$ is strictly convex over the open subset ${\mathrm T}(\Theta)$ of ${\mathrm T}(\Sigma)$ (or $\ell_{\lambda}$ is strictly convex over the open subset $\mathcal{T}(\Theta)$ of $\mathcal{T}(\Sigma)$).
We choose a sequence $(\Theta_{n})_{n}$, with $\Theta_{0}=\Theta$, of finer and finer train-track approximations of $\mu$ converging to $\mu$.
This sequence defines an exhaustion of $\mathcal{T}(\Sigma)$ by strictly increasing open subsets $\mathcal{T}(\Theta_{n})$ and, using the local coordinates $\sigma_{\Theta_{n}}\,:\,\mathcal{T}(\Theta_{n})\to{\mathrm T}(\Theta_{n})$ and the associated transport maps $p_{n}\,:\,{\mathrm T}(\Theta_{n})\to{\mathrm T}(\Theta)$ as explained in Section \ref{subsection:global_shear}, we get shear coordinates $\widetilde{\sigma}_{\Theta}$ over the whole Teichm\"uller space
$$
\widetilde{\sigma}_{\Theta}\,:\,\mathcal{T}(\Sigma)\to{\mathrm T}(\Theta)
$$
with $\widetilde{\sigma}_{\Theta}=p_{n}\circ\sigma_{\Theta_{n}}$ over ${\mathrm T}(\Theta_{n})$.

Let us show that $\ell_{\lambda}$ is strictly convex on $\mathcal{T}(\Sigma)$, that is, that the function $\underline{\ell}$ is strictly convex over ${\mathrm T}(\Theta)$.
Let $h,h'$ be two points of Teichm\"uller space $\mathcal{T}(\Sigma)$ and let $\widetilde{\bf x}=\widetilde{\sigma}_{\Theta}(h)$ and $\widetilde{\bf x}'=\widetilde{\sigma}_{\Theta}(h')$ be the corresponding coordinates in ${\mathrm T}(\Theta)$.
The image in ${\mathrm T}(\Theta)$ of the affine segment $\{t\widetilde{\bf x}+(1-t)\widetilde{\bf x}'\,:\,t\in[0\,;1]\}$ by $\widetilde{\sigma}_{\Theta}^{-1}$ is compact so there exists an integer $N$ such that this image is contained in all patches $\mathcal{T}(\Theta_{n})$, $n\geq N$.
Fix such an $n$ and set ${\bf x}=\sigma_{\Theta}(h)$ and ${\bf x}'=\sigma_{\Theta}(h')$.
By definition,
$$
\widetilde{\bf x}=p_{n}({\bf x})\ \textrm{and}\ \widetilde{\bf x}'=p_{n}({\bf x}').
$$ 
Thus, for all $t\in[0\,;1]$, one has 
\begin{align*}
\underline{\ell}\left(t\widetilde{\bf x}+(1-t)\widetilde{\bf x}'\right)&=\underline{\ell}\left(tp_{n}({\bf x})+(1-t)p_{n}({\bf x'})\right)\\
&=\underline{\ell}\left(p_{n}(t{\bf x}+(1-t){\bf x'})\right)\\
&=\underline{\ell}_{n}\left(t{\bf x}+(1-t){\bf x'}\right)\\
&<t\underline{\ell}_{n}({\bf x})+(1-t)\underline{\ell}_{n}({\bf x'})\\
&<t\underline{\ell}(p_{n}({\bf x}))+(1-t)\underline{\ell}(p_{n}({\bf x'}))\\
&<t\underline{\ell}(\widetilde{\bf x})+(1-t)\underline{\ell}(\widetilde{\bf x}').
\end{align*}
This concludes the proof
\end{proof}

\section{An application to stretch lines}

A {\bf stretch line} is an oriented curve $\gamma$ in Teichm\"uller space:
\begin{eqnarray*}
\gamma\,:\,\mathbb{R}&\to&\mathcal{T}(\Sigma)\\
t&\mapsto& \gamma(t).
\end{eqnarray*}

Such a line is obtained by fixing a complete geodesic lamination $\mu$ on $\Sigma$, called the \textbf{support} of the stretch line, and then deforming a given hyperbolic structure $h_{0}$ on $\Sigma$ by multiplying the shears between the ideal triangles given by $\mu$ by the factor $e^{t}$.
Stretch lines are geodesic lines for Thurston's asymmetric metric $d_{T}$ on $\mathcal{T}(\Sigma)$ (\cite{Thurston76}) one of whose definition is
$$
d_{T}(g,h)=\log\sup_{\lambda}\frac{\ell_{\lambda}(h)}{\ell_{\lambda}(g)}.
$$

Consider Thurston's shear coordinates on $\mathcal{T}(\Theta,h_{0})$ given by a train-track approximation $\Theta$ of the complete geodesic lamination $\mu$.
In these coordinates, the stretch line with support $\mu$ and passing through the point $h_{0}$ is described by the linear expansion of the shear coordinates of $h_{0}$ by the factor $e^{t}$, where $t$ varies in $\mathbb{R}$.
In particular, the whole stretch line is contained in the patch $\mathcal{T}(\Theta,h_{0})$ of $\mathcal{T}(\Sigma)$.
In order to make a link with our paper's results, we also consider stretch lines parameterized by \textbf{logarithm of arc length}, namely, curves $\underline{\gamma}$
\begin{eqnarray*}
\mathbb{R}_{+}&\to&\mathcal{T}(\Sigma)\\
u&\mapsto& \underline{\gamma}(u)=\gamma(\log u).
\end{eqnarray*}
In other words, the hyperbolic structure $\underline{\gamma}(u)$ is obtained by expanding the shear coordinates of $h_{0}$ \emph{linearly} by the factor $u>0$. 
Because of its parameterization, this curve is no longer a geodesic for the metric $d_{T}$.

This paragraph gives an explanation for the behaviour of the curves drawn in \cite{PapThe07} p.190 representing the variations of the length functions of some geodesic laminations along some stretch lines. 
%These curves were plotted using a computer, but it seems that they were rather deformed.
  
Let us recall the results obtained in \cite{Theret03} \cite{Theret05} concerning the asymptotic behaviour of the length of $\lambda$ along a stretch line.
The \textbf{stump} of a geodesic lamination $\mu$ of $\Sigma$ is the greatest sublamination of $\mu$ that can be equipped with a transverse measure (of full support). 
Denote it by $\mu_{0}$. 
Consider a stretch line $\gamma$ supported by a complete geodesic lamination $\mu$ and let $\mu_{0}$ be the stump of $\mu$. Let $\lambda$ denote the horocyclic lamination associated to $\gamma$, that is, the geodesic lamination obtained from the horocyclic foliation by straightening its leaves into geodesics. 
Our main result implies the following theorem.

\begin{theorem}
The length of a measured geodesic lamination which intersects transversely the stump of a complete geodesic lamination $\mu$ is strictly convex along the stretch line supported by $\mu$ once the line is parameterized by logarithm of arc length.
\end{theorem}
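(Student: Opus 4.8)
The plan is to obtain this statement as a direct consequence of Theorem \ref{theorem:main}. First I would record how the stretch line looks in shear coordinates. Fix a train-track approximation $\Theta$ of $\mu$; by construction the stretch line $\gamma$ with support $\mu$ through $h_{0}$ is obtained by multiplying every shear of $h_{0}$ by $e^{t}$, so that the curve $\underline{\gamma}(u)=\gamma(\log u)$ parameterized by logarithm of arc length has shear coordinates $\widetilde{\sigma}_{\Theta}(\underline{\gamma}(u))=u\cdot\widetilde{\sigma}_{\Theta}(h_{0})$. In other words, in the patch $\mathcal{T}(\Theta,h_{0})$ — which, as recalled above, contains the whole stretch line — the curve $u\mapsto\underline{\gamma}(u)$ is an affine half-line through the origin of ${\mathrm T}(\Theta)$. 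Hence the assertion to be proved is precisely that $\ell_{\lambda}$, viewed as a function of the shear coordinates, is strictly convex in restriction to this affine half-line.

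The key step is then the purely lamination-theoretic observation that \emph{if $\lambda$ intersects every leaf of the stump $\mu_{0}$ transversely, then $\lambda$ intersects every leaf of $\mu$ transversely}. Indeed, $\mu_{0}$ is the union of the minimal sublaminations of $\mu$, and every leaf $g$ of $\mu$ not lying in $\mu_{0}$ has at least one end whose limit set contains a minimal sublamination $M\subseteq\mu_{0}$. Pick a leaf $m$ of $M$ and, by hypothesis, a leaf $\ell$ of $\lambda$ crossing $m$ transversely at a point $q$. By minimality of $M$, the leaf $g$ enters every neighbourhood of $q$, and since the leaves of $\mu$ form a Lipschitz line field, the arcs of $g$ close to $q$ have direction arbitrarily close to that of $m$ at $q$; therefore, in a sufficiently small disc around $q$, these arcs are transverse to $\ell$, so $\ell$ meets $g$ transversely. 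This proves the claim, and in particular it places us inside the hypotheses of Theorem \ref{theorem:main}.

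Combining the two steps finishes the argument: under our hypothesis $\lambda$ intersects all leaves of $\mu$ transversely, so Theorem \ref{theorem:main} gives that $\ell_{\lambda}$ is \emph{strictly} convex as a function on ${\mathrm T}(\Sigma)$; its restriction to any affine line is then strictly convex, in particular its restriction to the affine half-line $u\mapsto u\cdot\widetilde{\sigma}_{\Theta}(h_{0})$, which is the stretch line parameterized by logarithm of arc length. I expect the only genuinely delicate point to be the lamination-theoretic reduction of the previous paragraph (passing from transversality to the stump to transversality to all of $\mu$); the remaining ingredients are the shear-coordinate description of stretch lines and the elementary fact that strict convexity is inherited by restrictions to straight lines. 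It is worth noting that parameterizing by logarithm of arc length is essential here: with the intrinsic (Thurston-metric) parameterization the stretch line is the \emph{exponential} curve $t\mapsto e^{t}\widetilde{\sigma}_{\Theta}(h_{0})$ in shear coordinates, along which convexity of $\ell_{\lambda}$ need not persist since $\ell_{\lambda}$ may be decreasing.
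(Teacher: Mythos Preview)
Your argument is correct under your reading of the hypothesis, but the route you take differs from the paper's in an instructive way.

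You reduce to Theorem~\ref{theorem:main} by first establishing the lamination-theoretic claim that \emph{transversality to every leaf of the stump implies transversality to every leaf of $\mu$}. Your proof of that claim is sound (the small quibble that ``by minimality of $M$'' should rather be ``because $M$ lies in the limit set of the end of $g$'' is harmless). Having global strict convexity on ${\mathrm T}(\Sigma)$, you then simply restrict to the affine ray $u\mapsto u\,\widetilde{\sigma}_\Theta(h_0)$.

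The paper does \emph{not} argue that $\alpha$ meets every leaf of $\mu$, and in fact does not need this. Its proof works under the weaker hypothesis $i(\alpha,\mu_0)>0$ (compare the table later in the section, where the condition is written ``$\alpha\cap\mu_0\neq\emptyset$''). The point is that transversality at a single leaf of $\mu_0$ already forces $\alpha$ to cross a branch $b$ of $\Theta$ whose distinguished shear $\sigma_b(h_0)$ is nonzero; along the stretch line this shear varies as $u\,\sigma_b(h_0)$, so by the branch-segment analysis (Lemma~\ref{lemma:branch} / Proposition~\ref{proposition:convex}) the length is strictly convex in that direction. This is weaker than global strict convexity on ${\mathrm T}(\Sigma)$ but exactly what is needed on the ray.

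The practical difference shows up when $\mu_0$ is disconnected: if $\alpha$ meets one component of $\mu_0$ but misses another, your reduction fails (there may be isolated leaves of $\mu$ spiralling only onto the missed component), yet the paper's argument still applies. So your approach proves a slightly narrower statement, cleanly; the paper's approach proves the statement in full by exploiting that strict convexity is only needed along one specific line.
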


\begin{proof}
Let $\alpha$ be a measured geodesic lamination which intersects transversely the stump $\mu_{0}$ of a stretch line supported by the complete geodesic lamination $\mu$.
Let $\Theta$ be a train-track approximation of $\mu$.
The assumption that $\alpha$ intersects $\mu_{0}$ implies that leaves of $\alpha$ cross branches of $\Theta$ themselves crossed by leaves of $\mu$ with \emph{non-zero} shears, because any segment $a$ transverse to $\mu_{0}$ is necessarily crossed by infinitely many leaves of $\mu$, hence the spikes of the ideal triangles of $\Sigma\setminus\mu$ cross  $a$ infinitely many times, preventing entirely vanishing shears.
So the shear coordinates involved in the computation of the length of $\alpha$ are not constant along the stretch line and vary linearly when the parameterization is by logarithm of arc length.
Our main theorem implies that the length of $\alpha$ is strictly convex.
\end{proof}

This result sheds light upon the variations of the length function 
$$
u\mapsto\ell_{\alpha}(\underline{\gamma}(u))
$$
hence upon the variations of the length function
$$
t\mapsto\ell_{\alpha}(\gamma(t))
$$
since these variations do not depend upon the (positive) parameterization of the stretch line.
Before being more explicit, we recall that we have the following asymptotic behaviour for the length of a measured geodesic lamination $\alpha$ in terms of its intersection pattern with $\mu_{0}$ and $\lambda$ (\cite{Theret03}, \cite{Theret05}):\\

 \begin{tabular}{|c|c|c|c|c|}
  \hline
  $\ $ & $\alpha\cap\mu_{0}=\emptyset$ & $\alpha\cap\mu_{0}=\emptyset$ &
  $\alpha\cap\mu_{0}\neq\emptyset$ & $\alpha\cap\mu_{0}\neq\emptyset$ \\
  $\ $ & $\alpha\cap\lambda=\emptyset$ & $\alpha\cap\lambda\neq\emptyset$ &
  $\alpha\cap\lambda=\emptyset$ & $\alpha\cap\lambda\neq\emptyset$ \\
  \hline
  $t\to+\infty$ & $<\infty$ & $0$ if
  $\alpha\subseteq\lambda$ & $<\infty$ & $0$ if
  $\alpha\subseteq\lambda$ \\
  $\ $ & & $\infty$  if
  $\alpha\nsubseteq\lambda$ & & $\infty$ if
  $\alpha\nsubseteq\lambda$\\
  \hline
  $t\to-\infty$ & $<\infty$ & $<\infty$ & $0$ if
  $\alpha\subseteq\mu_{0}$ & $0$ if
  $\alpha\subseteq\mu_{0}$\\
  $\ $ & & & $\infty$ if
  $\alpha\nsubseteq\mu_{0}$ & $\infty$ if
  $\alpha\nsubseteq\mu_{0}$ \\
  \hline
  \end{tabular}
 \\
 
Note that this asymptotic behaviour is independent of the parameterization of the stretch line.

From this asymptotic behaviour and the strict convexity, we get for instance the following result:

\begin{corollary}
The length of the horocyclic lamination is strictly decreasing and converges to zero as $t$ converges to infinity.
\end{corollary}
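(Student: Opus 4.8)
The plan is to apply the stretch-line convexity theorem just proved, together with the asymptotic table recalled above, to the measured geodesic lamination $\alpha:=\lambda$ itself, where $\lambda$ is the horocyclic lamination of the stretch line $\gamma$ supported by $\mu$. The only input that is not purely formal is that $\lambda$ intersects the stump $\mu_{0}$ of $\mu$ transversely, so I would first record this. By construction $\lambda$ is the geodesic straightening of the horocyclic foliation $F_{\mu}(h)$, which is transverse to the whole of $\mu$; since $\mu_{0}$ is non-empty (it contains any minimal sublamination of $\mu$, and minimal geodesic laminations carry a transverse measure of full support), near any leaf of $\mu_{0}\subseteq\mu$ there are leaves of $F_{\mu}(h)$ crossing $\mu_{0}$, so after straightening $i(\lambda,\mu_{0})>0$. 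Moreover $\lambda\not\subseteq\mu_{0}$, since a lamination transverse to $\mu_{0}$ and contained in it would be empty. (This transversality is part of the standard picture of horocyclic laminations of stretch lines; see \cite{Theret03}, \cite{Theret05}.) Pinning down carefully that the straightened horocyclic leaves genuinely meet the recurrent part $\mu_{0}$ of $\mu$, rather than only the rest of $\mu$, is the one step I expect to require real care.

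Granting this, the preceding theorem gives that $u\mapsto\ell_{\lambda}(\underline{\gamma}(u))$ is \emph{strictly} convex on $(0,\infty)$: indeed the shear coordinates of $\underline{\gamma}(u)$ are $u$ times those of $h_{0}$, so $\underline{\gamma}$ traces an affine ray in $\mathrm{T}(\Theta)$ along which the strictly convex function $\ell_{\lambda}$ restricts to a strictly convex function of $u$ (the whole stretch line lying inside the patch $\mathcal{T}(\Theta,h_{0})$, where $\ell_{\lambda}$ is finite). For the limit, note that $\alpha=\lambda$ satisfies $\alpha\subseteq\lambda$ trivially, $\alpha\cap\lambda\neq\emptyset$, and, by the previous paragraph, $\alpha\cap\mu_{0}\neq\emptyset$; the entry of the asymptotic table in the column $\alpha\cap\mu_{0}\neq\emptyset$, $\alpha\cap\lambda\neq\emptyset$ then yields $\ell_{\lambda}(\gamma(t))\to0$ as $t\to+\infty$ (and, since $\alpha\not\subseteq\mu_{0}$, also $\ell_{\lambda}(\gamma(t))\to+\infty$ as $t\to-\infty$, which matches the formulation of the Corollary stated in the Introduction).

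Finally I would deduce monotonicity from convexity. Put $g(u):=\ell_{\lambda}(\underline{\gamma}(u))$, a strictly convex, everywhere positive function on $(0,\infty)$ with $\lim_{u\to\infty}g(u)=0$. A strictly convex function on an open interval is either monotone or attains a strict minimum at an interior point; in the latter case it is strictly increasing to the right of that point, forcing its limit at $+\infty$ to exceed a positive value, which is impossible here; and $g$ cannot be strictly increasing, for then every value $g(u)$ would be smaller than its limit $0$, contradicting positivity. Hence $g$ is strictly decreasing on $(0,\infty)$. Since $\gamma(t)=\underline{\gamma}(e^{t})$ and $t\mapsto e^{t}$ is an increasing bijection of $\mathbb{R}$ onto $(0,\infty)$, the function $t\mapsto\ell_{\lambda}(\gamma(t))=g(e^{t})$ is strictly decreasing on $\mathbb{R}$ and tends to $0$ as $t\to+\infty$, which is the assertion.
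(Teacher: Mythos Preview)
Your proof is correct and follows essentially the same route as the paper: establish that the horocyclic lamination $\lambda$ meets the stump $\mu_{0}$ transversely, invoke the preceding theorem to get strict convexity of $u\mapsto\ell_{\lambda}(\underline{\gamma}(u))$, read off the asymptotics from the table, and conclude strict monotonicity. You supply more detail than the paper does---in particular you justify the transversality $i(\lambda,\mu_{0})>0$ and spell out the convexity-plus-limit-zero argument for monotonicity---whereas the paper simply asserts both points.
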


\begin{proof}
The length of the horocyclic lamination $\lambda$ converges to $+\infty$ as $t$ converges to $-\infty$ and converges to $0$ as $t$ converges to $+\infty$.
These limits are independent from the way the stretch line is parameterized, 
so we can assume the line parameterized by the logarithm of arc length.
The horocyclic lamination intersects the stump of $\mu$ transversely, so its length function is strictly convex.
This implies that the length of $\lambda$ is strictly decreasing when the stretch line is parameterized by logarithm of arc length, hence when it is parameterized by arc length as well.
The proof is over.
\end{proof}

Some other results can be drawn from the convexity of the length function. For instance, recalling that a compound function $g\circ f$ of two strictly convex functions with $g$ being strictly increasing produces a strictly convex function, we conclude that wherever the length function $t\mapsto\ell_{\alpha}(\gamma(t))$ is stritcly increasing, it is strictly convex.
Inspection of the graphs of these functions suggests that they are also strictly convex when decreasing, but I have no proof of this.

The table above says that whenever $\alpha$ is disjoint from the stump $\mu_{0}$ of $\mu$ and from the horocyclic lamination $\lambda$, then its length function is bounded. 
One might at first sight deduce that this length function is constant, because of strict convexity. This is not true in general however. But, in this case, we have 

\begin{corollary}
Let $\alpha$ be a measured geodesic lamination disjoint from the stump $\mu_{0}$ of $\mu$ and from the horocyclic lamination $\lambda$.
Then its length function $t\mapsto\ell_{\alpha}(\gamma(t))$ along the stretch line $\gamma$ supported by $\mu$ and directed by $\lambda$ is either constant or is strictly decreasing with 
$$
0<\lim_{t\to+\infty}\ell_{\alpha}(\gamma(t))<\lim_{t\to-\infty}\ell_{\alpha}(\gamma(t))<\infty.
$$
\end{corollary}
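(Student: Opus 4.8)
The plan is to reparameterize the stretch line by logarithm of arc length and then to feed the convexity of Theorem~\ref{theorem:main} into the asymptotic table displayed above. Put $g(u):=\ell_{\alpha}(\gamma(\log u))$ for $u>0$. Since $t\mapsto e^{t}$ is an increasing homeomorphism of $\mathbb{R}$ onto $(0,\infty)$, the function $t\mapsto\ell_{\alpha}(\gamma(t))$ is constant, respectively strictly decreasing, exactly when $g$ is, and the two end limits agree; so it suffices to study $g$. I would fix a sufficiently fine train-track approximation $\Theta$ of $\mu$ which is good with respect to $\alpha$ in the sense of Section~\ref{subsection:piecewisegeodesic} and which contains the whole stretch line in the patch $\mathcal{T}(\Theta)$. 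In the associated local shear coordinates $\sigma_{\Theta}$ one has $\sigma_{\Theta}(\gamma(\log u))=u\mathbf{x}^{0}$ with $\mathbf{x}^{0}=\sigma_{\Theta}(h_{0})$, so the stretch line is the affine ray through the origin directed by $\mathbf{x}^{0}$ (if $\mathbf{x}^{0}=0$ the line is a point and $g$ is constant, so assume $\mathbf{x}^{0}\neq0$). By Theorem~\ref{theorem:minimal_length}, for $h$ of shear coordinates $\mathbf{x}$ one has $\ell_{\alpha}(h)=\inf_{\mathbf{y}}\underline{\ell}(\mathbf{x},\mathbf{y})$, where $\underline{\ell}$ is the length functional built from $\alpha$ and $\Theta$ as in Section~\ref{subsection:piecewisegeodesic}, so $g(u)=\inf_{\mathbf{y}}\underline{\ell}(u\mathbf{x}^{0},\mathbf{y})$.

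First I would prove the following dichotomy. By Proposition~\ref{proposition:convexity1}, $\underline{\ell}$ is convex, and by Lemma~\ref{lemma:branch} the length of a branch segment of $\alpha$ lying in a branch $b_{j}$ crossed by $\alpha$ is a \emph{strictly} convex function of the shear coordinate $x_{j}$ and of its two position numbers (strictly, because such a segment is transverse to $\mu$ and hence not contained in a leaf of $\mu$); all branch and triangle segment lengths of $\alpha$ depend only on the shear coordinates of branches crossed by $\alpha$ and on the position numbers. Hence either $\mathbf{x}^{0}$ has a nonzero coordinate on some branch crossed by $\alpha$, in which case for each $\mathbf{y}$ the function $u\mapsto\underline{\ell}(u\mathbf{x}^{0},\mathbf{y})$ contains a strictly convex summand and is strictly convex, whence (taking the infimum over $\mathbf{y}$, which is attained, exactly as in Corollary~\ref{corollary:finiteconvexalpha}) $g$ is \emph{strictly} convex on $(0,\infty)$; or $\mathbf{x}^{0}$ vanishes on every branch crossed by $\alpha$, in which case $\underline{\ell}(u\mathbf{x}^{0},\mathbf{y})$ does not depend on $u$ and $g$ is constant. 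In the second case we are done.

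In the first case I would invoke the asymptotic behaviour of the table, legitimate because $\alpha\cap\mu_{0}=\emptyset$ and $\alpha\cap\lambda=\emptyset$ (and $\alpha\nsubseteq\lambda$): by \cite{Theret03}, \cite{Theret05} the limits $\lim_{u\to0^{+}}g(u)$ and $\lim_{u\to+\infty}g(u)$ are both finite and the latter is positive. In particular $g$ is bounded above on $(0,\infty)$; a convex function on a half-line that is bounded above is non-increasing, for if $g(a)<g(b)$ with $a<b$ then the nondecreasing chord-slope property forces $g(u)\to+\infty$; being strictly convex, $g$ has no interval of constancy, hence $g$ is strictly decreasing, and therefore
\[
0<\lim_{u\to+\infty}g(u)=\inf_{u}g(u)<g(1)<\sup_{u}g(u)=\lim_{u\to0^{+}}g(u)<\infty,
\]
which is the asserted chain of inequalities for $t\mapsto\ell_{\alpha}(\gamma(t))$ (the limits being independent of the positive parameterization). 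The main obstacle is the dichotomy of the second paragraph: since here $\alpha$ need not meet every leaf of $\mu$, the functional $\underline{\ell}$ is only \emph{partially} strictly convex, and one must track exactly which shear coordinates it depends on and verify that the branch-segment strict convexity of Lemma~\ref{lemma:branch} genuinely bites along the stretch direction $\mathbf{x}^{0}$ whenever $\alpha$ crosses a branch carrying nonzero shear.
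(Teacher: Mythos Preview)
The paper does not give a proof of this corollary: it is stated and followed only by the remark that ``the factor $e^{t}$ destroys here the strict convexity which is true for $u\mapsto\ell_{\alpha}(\underline{\gamma}(u))$''. Your approach---reparameterize by $u=e^t$, use convexity of $g(u)=\ell_\alpha(\underline\gamma(u))$ together with the boundedness from the asymptotic table to force $g$ non-increasing, and then invoke strict convexity of $g$ to rule out plateaus---is exactly what that remark is gesturing at, and the analytic part (strictly convex $+$ bounded above on $(0,\infty)\Rightarrow$ strictly decreasing) is correct.

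The one genuine soft spot is the dichotomy you isolate yourself. Your criterion ``$\mathbf{x}^0$ vanishes on every branch crossed by $\alpha$ $\Rightarrow$ $\underline\ell(u\mathbf{x}^0,\mathbf y)$ is independent of $u$'' is not obviously right as stated: the length of a branch segment of $\alpha$ in $b_j$ depends, via Proposition~\ref{proposition:convex} and the formula $y_j^{\pm}(h)=y_j^{\pm}+\tfrac12(\sigma_j^{\pm}(h)-\sigma_j^{\pm}(h_0))$, on the internal shears $\sigma_j^{\pm}$ of the strip over $b_j$, and these are integral linear combinations of distinguished shears that may include branches \emph{not} traversed by $\alpha$ (spikes of other ideal triangles plunge into the strip of $b_j$). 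So ``zero on branches crossed by $\alpha$'' need not kill all $u$-dependence. The converse direction of your dichotomy is fine: if some branch crossed by $\alpha$ carries nonzero shear $\sigma_{b_j}(h_0)$, then since $\sigma_{b_j}$ is the total shear across the strip of $b_j$, the line $u\mapsto u\mathbf{x}^0$ genuinely moves inside the shear space of that strip, and Lemma~\ref{lemma:branch}/Proposition~\ref{proposition:convex} give strict convexity of that summand in $u$. To close the gap you should replace your criterion by the intrinsic one ``the linear map $u\mapsto u\mathbf{x}^0$ restricted to the shear coordinates on which $\underline\ell$ actually depends is nontrivial'', and check that when it is trivial the hyperbolic structure on the subsurface carrying $\alpha$ (the component of $\Sigma\setminus\mu_0$ containing $\alpha$) is literally unchanged along the stretch, whence $g$ is constant. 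This is the step the paper also leaves implicit.

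A minor point: the strict positivity of $\lim_{t\to+\infty}\ell_\alpha(\gamma(t))$ is not in the table (which only says ``$<\infty$''); you assert it by citation, and that is how the paper treats it as well.
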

 
Note that in the situation of this corollary, the length function is no longer strictly convex. 
This is due to the fact that the factor $e^{t}$ destroys here the strict convexity which is true for $u\mapsto\ell_{\alpha}(\underline{\gamma}(u))$. 

Another type of transformations can be considered, namely earthquake deformations along a \emph{measured} geodesic lamination $\gamma_{0}$.
If one completes $\gamma_{0}$ into $\mu$ such that the stump of $\mu$ is $\gamma_{0}$, the earthquake deformation in shear coordinates associated to $\mu$ are given by
$$
t\mapsto ({\bf x}_{0}\pm t, {\bf x}'),
$$
where ${\bf x}'$ are the shear coodinates associated to the isolated leaves of $\mu\setminus\gamma_{0}$ and where the plus sign corresponds to a left earthquake whereas the minus sign is for a right earthquake.
Our results shows that the length of any measured geodesic lamination transverse to $\gamma_{0}$ is strictly convex along an earthquake supported by $\gamma_{0}$.
So we recover this celebrated result of Kerckhoff's.

\end{document}